\definecolor{colorcita}{RGB}{21,86,130}
\definecolor{colorref}{RGB}{5,10,177}
\definecolor{colorweb}{RGB}{177,6,38}
\numberwithin{subsection}{section}
\newtheorem{theorem}{Theorem}[section]
\newtheorem{problem}[theorem]{Problem}
\newtheorem{proposition}[theorem]{Proposition}
\newtheorem{corollary}[theorem]{Corollary}
\newtheorem{lemma}[theorem]{Lemma}
\newtheorem{definition}[theorem]{Definition}
\theoremstyle{definition}
\newtheorem{remark}[theorem]{Remark}
\theoremstyle{remark}
\DeclareMathOperator{\spa}{span}
\DeclareMathOperator{\id}{\mathrm{id}}
\newcommand\restrict[1]{\raisebox{-.5ex}{ $\vert $}_{#1}}
\newcommand*\bigcdot{\mathpalette\bigcdot@{.9}}
\newcommand*\bigcdot@[2]{\mathbin{\vcenter{\hbox{\scalebox{#2}{ $\m@th#1\bullet $}}}}}
\begin{document}
\title[Ryll-Wojtaszczyk Formulas for bihomogeneous polynomials]{Ryll-Wojtaszczyk Formulas for bihomogeneous \\ polynomials on the sphere}

\author[Defant]{A.~Defant}
\address{%
Institut f\"{u}r Mathematik,
Carl von Ossietzky Universit\"at,
26111 Oldenburg,
Germany}
\email{defant$@$mathematik.uni-oldenburg.de}

\author[Galicer]{D.~Galicer}   \address{Departamento de Matem\'{a}ticas y Estad\'{\i}stica, Universidad Torcuato Di Tella, Av. Figueroa Alcorta 7350 (1428), Buenos Aires, Argentina and IMAS-CONICET. \tiny{On leave from Departamento de Matematica, FCEyN, Universidad de Buenos Aires.}}  \email{daniel.galicer@utdt.edu}

\author[Mansilla]{M.~Mansilla}
\address{Departamento de Matem\'{a}tica,
Facultad de Cs. Exactas y Naturales, Universidad de Buenos Aires and IAM-CONICET. Saavedra 15 (C1083ACA) C.A.B.A., Argentina}
\email{mmansilla$@$dm.uba.ar}

\author[Masty{\l}o]{M.~Masty{\l}o}
\address{Faculty of Mathematics and Computer Science, Adam Mickiewicz University, Pozna{\'n}, Uniwersytetu
\linebreak
Pozna{\'n}skiego 4,
61-614 Pozna{\'n}, Poland}
\email{mieczyslaw.mastylo$@$amu.edu.pl}

\author[Muro]{S.~Muro}
\address{FCEIA, Universidad Nacional de Rosario and CIFASIS, CONICET, Ocampo  $\& $ Esmeralda, S2000 Rosario, Argentina}
\email{muro$@$cifasis-conicet.gov.ar}

\begin{abstract}
We investigate projection constants for spaces of bihomogeneous harmonic and bihomogeneous polynomials on the unit sphere in finite-dimensional complex Hilbert spaces. Using averaging techniques, we demonstrate that the minimal norm projection aligns with the natural orthogonal projection. This result enables us to establish a connection between these constants and weighted  \linebreak $L_1$-norms of specific Jacobi polynomials. Consequently, we derive explicit bounds, provide practical expressions for computation, and present asymptotically sharp estimates for these constants. Our findings extend the classical Ryll and Wojtaszczyk formula for the projection constant of homogeneous polynomials in finite-dimensional complex Hilbert spaces to the bihomogeneous setting.
\end{abstract}

\date{}

\thanks{The research of the fourth author was supported by the National Science Centre (NCN), Poland, Project 2019/33/B/ST1/00165. The research of the second and fifth authors is additionally supported by 20020220300242BA}

\subjclass[2020]{Primary: 33C55, 33C45, 46B06, 46B07. Secondary: 43A75, 46G25}

\keywords{Projection constants, spherical harmonics, bihomogeneous and finite degree polynomials, Jacobi polynomials.}
\maketitle

\maketitle

\section{Introduction}

This work is devoted to the study of projection constants for spaces of bihomogeneous
harmonic polynomials and bihomogeneous polynomials on Euclidean spheres in
finite-dimensional complex Hilbert spaces. To this end, we employ an abstract framework
that allows us to systematically compute projection constants of finite-dimensional
subspaces of $C(K)$. This framework has previously been applied to a variety of seemingly
unrelated settings, including functions on Boolean cubes \cite{defant2023asymptotic},
Dirichlet series \cite{defant2024projection}, trace class operators
\cite{defant2023integral}, spaces of homogeneous polynomials on real or complex Hilbert spheres
\cite{defant2024minimal, ryll1983homogeneous}. Related questions on projection constants and Bohr-type phenomena
for Banach spaces of analytic polynomials are addressed from a different but close perspective in
\cite{zdefant2025local}.

In particular, \cite{defant2024minimal} provides a thorough analysis of the asymptotic behavior of the projection constants for the spaces  $\mathcal{P}_{\leq d}(\mathbb{S}_{\mathbb{R}}^{n-1}) $,  $\mathcal{P}_d(\mathbb{S}_{\mathbb{R}}^{n-1}) $, and  $\mathcal{H}_d(\mathbb{S}_{\mathbb{R}}^{n-1}) $ on    $\mathbb{S}_{\mathbb{R}}^{n-1}:= \{x \in \mathbb{R}^n : \|x\|_2 = 1\} $, the real Euclidean  sphere. These spaces consist of degree-$d$ polynomials, $d$-homogeneous polynomials, and $d$-homogeneous spherical harmonics, respectively, all equipped with the uniform norm on $\mathbb{S}^{n-1}_{\mathbb{R}}$.

The present article aims to extend this analysis to the complex case, focusing on bihomogeneous settings, i.e., spaces of bihomogeneous polynomials and bihomogeneous harmonic polynomials on the complex Euclidean  sphere  $\mathbb{S}^{n-1}_{\mathbb{C}}:= \{z \in \mathbb{C}^n : \|z\|_2 = 1\} $. For simplicity, we will omit the superscript  ${\mathbb{C}} $ and just use  $\mathbb{S}^{n-1} $ to denote the previously defined set, as most functions in this work are evaluated in the  $n $-dimensional complex space  $\mathbb{C}^n $.

Let us begin by reviewing the necessary definitions. Denote by  $\mathfrak{P}(\mathbb{C}^n) $ the space of all functions  $f\colon \mathbb{C}^n \to \mathbb{C} $ that admit a representation of the form
\begin{equation} \label{writtenf}
f(z) = \sum_{(\alpha,\beta) \in J} c_{(\alpha,\beta)}\,\,z^\alpha\overline{z}^\beta\,,\quad \,\,z \in \mathbb{C}^n\,,
\end{equation}
where  $J $ is a finite index set in  $\mathbb{N}_0^{n} \times \mathbb{N}_0^{n} $, and  $c_{(\alpha,\beta)} \in \mathbb{C} $. The coefficients  $c_{(\alpha,\beta)} $ are uniquely determined, ensuring that the representation of  $f $ in the form above is unique. Indeed, if  $f = 0 $, then applying the differential operator
 $
\frac{\partial^{|\alpha|+|\beta|}}{\partial_{z_1}^{\alpha_1}\ldots \partial_{z_n}^{\alpha_n}\,\partial_{\overline{z}_{1}}^{\beta_1}\ldots \partial_{\overline{z}_{n}}^{\beta_n}}
 $
where, as usual,  $\partial_{z_{j}} := \frac{1}{2}(\partial_{x_{j}}-i\partial_{y_{j}}) $ and  $\partial_{\overline{z}_{j}} := \frac{1}{2}(\partial_{x_{j}}+i\partial_{y_{j}}) $, shows that  $c_{(\alpha,\beta)} = 0 $ for all  $(\alpha, \beta) \neq (0,0) $.

Within this framework, for $p, q\in \mathbb{N}_0$, the subspace of  $(p,q) $-bihomogeneous
polynomials, denoted as  $\mathfrak{P}_{p,q}(\mathbb{C}^n) $, consists of polynomials in  $\mathfrak{P}(\mathbb{C}^n) $ that are
$p $-homogeneous in  $z = (z_j) $ and  $q $-homogeneous in  $\overline{z} = (\overline{z_j}) $. Explicitly, these polynomials take the
form
\begin{equation} \label{(p,q)-writing}
f(z) = \sum_{|\alpha|=p, |\beta|=q} c_{(\alpha,\beta)}\,\,z^\alpha\overline{z}^\beta\,,
\end{equation}
where as usual  $|\gamma|:=\sum \gamma_i $
for  $\gamma = (\gamma_i)\in \mathbb{N}^n_0 $.
A polynomial $ f \in \mathfrak{P}(\mathbb{C}^n) $ is called harmonic if it satisfies $ \Delta f = 0 $, with $ \Delta $ being the
Laplacian
operator, defined as
\[
\Delta = \sum_{j=1}^n \frac{\partial^2}{\partial z_j \partial \overline{z_j}}.
\]

We indicate the space of all harmonic polynomials by  $\mathcal{H}(\mathbb{C}^n) $. The space of  $(p,q) $-bihomogeneous harmonic polynomials is denoted by  $\mathcal{H}_{p,q}(\mathbb{C}^n) $, defined as  $\mathcal{H}_{p,q}(\mathbb{C}^n) := \mathcal{H}(\mathbb{C}^n) \cap \mathfrak{P}_{p,q}(\mathbb{C}^n) $. This space is also known as the space of (solid) harmonics of bidegree $(p,q)$ in $\mathbb{C}^n$.

We focus on the Banach spaces $\mathcal{H}_{p,q}(\mathbb{S}^{n-1}) $ and $ \mathcal{P}_{p,q}(\mathbb{S}^{n-1}) $, consisting of $(p,q)$-bihomogeneous harmonic polynomials and $(p,q)$-bihomogeneous polynomials in $n$ complex variables, respectively, both restricted to the unit sphere $ \mathbb{S}^{n-1} $. Regarded as subspaces of the space of continuous functions $ C(\mathbb{S}^{n-1}) $, equipped with the classical supremum norm, these spaces inherit the structure of Banach spaces.  Furthermore, by homogeneity,  when computing the norm, it is equivalent to take the supremum either over the sphere or over the unit ball of $\ell_2^n(\mathbb{C})$, the $n$-dimensional Hilbert space.
The elements of $ \mathcal{H}_{p,q}(\mathbb{S}^{n-1}) $ are usually called surface harmonics.

Projection constants are essential tools in analyzing the geometric properties of normed spaces. For a Banach space $Y$ and a closed, complemented subspace $X\subset Y$, the projection constant $\lambda(X, Y) $ is defined as the
infimum  of the norms of all bounded linear projections from $Y$ onto $X$. Formally,
\[
\lambda(X, Y) = \inf \big\{ \|P\| : \,\, P \in \mathcal{L}(Y), \,\,\, P|_{X} = \text{id}_{X}\big\}\,.
\]
The absolute projection constant for a Banach space  $ X  $ is given by
\[
\boldsymbol{\lambda}(X) := \sup \, \boldsymbol{\lambda}(I(X),Y)\,,
\]
where the supremum is taken over all possible Banach spaces  $ Y  $ and isometric embeddings  $ I \colon X \to Y  $. In
the case where  $ X  $ is a finite-dimensional Banach space, and  $ X_1  $ is a subspace of a  $ C(K)  $-space that is
isometrically isomorphic to  $X$, we have  (see, e.g., \cite[III.B.5 Theorem]{wojtaszczyk1996banach}):
\begin{align}\label{eq: l_infty inyective}
\boldsymbol{\lambda}(X) = \boldsymbol{\lambda}(X_1, C(K))\,.
\end{align}
This formula implies that finding  $\boldsymbol{\lambda}(X)  $ is the same as determining the norm of a~minimal projection
from $C(K)$ onto $X_1$.

Projections are also of significant importance in approximation theory. When  $Y$ is a Banach space and  $P$ is a~projection
from  $ Y  $ onto a subspace  $X$, the approximation error  $\|y - Py\|_Y  $ for an element $ y \in Y  $
can be bounded by:
\[
\|y - Py\|_Y \leq \|\id_Y - P\| \, \text{dist}(y, X) \leq (1 + \|P\|) \, \text{dist}(y, X)\,,
\]
where  $ \text{dist}(y, X) = \inf\{\|y - x\|_Y \colon  x \in X\}  $. This bound emphasizes the significance of minimizing  $\|P\|$.
A projection  $P_0 \colon Y \to X$ that achieves  $\|P_0\| = \boldsymbol{\lambda}(X, Y)$ is considered a minimal projection onto $X$.

Our primary objective in this work is to analyze the projection constants for the spaces  $\mathcal{H}_{p,q}(\mathbb{S}^{n-1}) $ and
$\mathfrak{P}_{p,q}(\mathbb{S}^{n-1}) $, leveraging the interplay between the degrees  $p $ and  $q $. These investigations naturally
extend classical results such as the renowned formula by Ryll and Wojtaszczyk \cite{ryll1983homogeneous}, which provides an expression
for the projection constants of homogeneous polynomials on complex Hilbert spaces: for all $p\in \mathbb{N}$,
\begin{equation}\label{Ryll-Wojtaszczyk}
\boldsymbol{\lambda}\big(\mathcal{P}_{p}(\mathbb{S}^{n-1})\big)
\,\,=\,\,  \frac{\Gamma(n + p )}{\Gamma(n-1)\Gamma( 1 + p )}\,
\int_0^1 (1 - t)^{n-2} t^{\frac{p}{2}}\,dt  \,\,=\,\,
\frac{\Gamma(n+p) \Gamma(1 + \frac{p}{2})}{\Gamma(1 + p) \Gamma(n + \frac{p}{2})}\,.
\end{equation}

Here, $\mathcal{P}_{p}(\mathbb{S}^{n-1}) $ denotes the space of $p$-homogeneous (analytic) polynomials in  $n$ complex variables,
endowed with the supremum norm on the (complex) sphere  $\mathbb{S}^{n-1} $. Note that in terms of our setting, $\mathcal{P}_{p}(\mathbb{S}^{n-1})$
equals to  $\mathfrak{P}_{p,0}(\mathbb{S}^{n-1}) = \mathcal{H}_{p,0}(\mathbb{S}^{n-1}) $.

By embedding the space $S\in \big\{\mathcal{H}_{p,q}(\mathbb{S}^{n-1}),\,\mathfrak{P}_{p,q}(\mathbb{S}^{n-1})\big\}$
into \( L_2(\mathbb{S}^{n-1}) \), we can naturally define the restriction of the orthogonal projections onto $S$. This results in
projections  from \( C(\mathbb{S}^{n-1}) \) onto \( S \). By employing averaging techniques, we demonstrate that these projections are,
in fact, of minimal norm. This finding provides valuable insight into the geometric structure of the spaces in question.

This enables us to examine the relationship between the projection constants of $\mathcal{H}_{p,q}(\mathbb{S}^{n-1})$ and
$\mathfrak{P}_{p,q}(\mathbb{S}^{n-1})$, and certain weighted $L_1$-norms of specific Jacobi polynomials, as illustrated in Theorems
\ref{jacobi} and \ref{bi-end}. These findings yield an integral formula for the constants of interest.

As a byproduct, we provide upper bounds for the projection constant of these spaces. Moreover, we analyze the correct asymptotic behavior
for the case where the parameters  $(p,q) $ are of the form  $(p, p+d) $ for a fixed distance  $d $ as  $p $ increases (i.e.,  $p $ and  $q $ are equidistant).
Specifically, we prove that in this case the projection constant has asymptotic order $p^{n-\frac{5}{2}}$ and $p^{n-\frac{3}{2}}$, as shown in Propositions $\ref{asymp-harm}$ and $\ref{end}$, respectively. We also determine the correct growth order when  $p $ increases and  $q = 1 $ for the case  $n = 2 $, as stated in Proposition~\ref{nazibude}. Additionally, we present the exact formula for the projection constant when  $(p, q) = (1, 1) $, as given in Proposition~\ref{caso 1-1}, for all dimensions  $n \geq 2 $.

To conclude, we apply the techniques developed to show, in Corollary~\ref{coro: inclusion no compacta}, that any closed subspace $ X $
of $ C(\mathbb{S}^{n-1}) $ that contains
\[
\overline{\operatorname{span}\big\{ \mathcal{H}_{p,q}(\mathbb{S}^{n-1}) : p \in J \big\}},
\]
for an infinite set $ J \subset \mathbb{N} $ and fixed degree $ q $, is not compactly embedded in $ L_1(\mathbb{S}^{n-1}, \sigma_n) $,
where $ \sigma_n $ denotes the normalized rotation invariant measure on  $ \mathbb{S}^{n-1} $.

\section{Brief Preliminaries on Bihomogeneous Polynomial Spaces}

We now present some well-known structural results from the theory of bihomogeneous harmonic and polynomial spaces, which will be useful
for our purposes. These results are well-documented and can be found in various references (see, e.g., \cite{dunkl_xu_2001,folland1975spherical,rudin1980}). An excellent reference for these topics is \cite{klima2004subelliptic}, where the reader can also find further related works.

We include only the statements that are essential for our objectives.

Let us start by recalling the dimension of the spaces  $ \mathcal{H}_{p,q}(\mathbb{S}^{n-1})$. For $n \geq 2$ and  $p, q \geq 0$, we have:
\begin{equation} \label{dimfor-comp}
N_{n,p,q} := \dim \mathcal{H}_{p,q}(\mathbb{S}^{n-1})
= \frac{(n + p + q - 1)(n - 2 + p)! (n - 2 + q)!}{p! q! (n - 1)! (n - 2)!}\,.
\end{equation}

The following two lemmas demonstrate the invariance of the spaces $ \mathfrak{P}_{p,q}(\mathbb{S}^{n-1}) $ and $ \mathcal{H}_{p,q}(\mathbb{S}^{n-1})$
under the action of the unitary group $ \mathcal{U}_n $, as well as the orthogonality between the spaces of bihomogeneous harmonic polynomials.

\smallskip

\begin{lemma} \label{lemma: invariance}
For all $ p, q \geq 0 $, $ f \in \mathfrak{P}_{p,q}(\mathbb{S}^{n-1}) $, and $ U \in \mathcal{U}_n $, we have that $ f \circ U \in \mathfrak{P}_{p,q}(\mathbb{S}^{n-1}) $. The same result holds if\,
$\mathfrak{P}_{p,q}(\mathbb{S}^{n-1}) $  is replaced by $ \mathcal{H}_{p,q}(\mathbb{S}^{n-1}) $.
\end{lemma}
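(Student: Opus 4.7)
The plan is to verify both invariances directly from the definitions, exploiting the chain rule together with the unitarity of $U$. The central observation is that the substitution $w = Uz$ turns every monomial $z^\alpha \overline{z}^\beta$ into a $\mathbb{C}$-linear combination of monomials $w^{\alpha'} \overline{w}^{\beta'}$ with $|\alpha'| = |\alpha| = p$ and $|\beta'| = |\beta| = q$, simply because each entry $(Uz)_j = \sum_k U_{jk} z_k$ is linear in $z$ while each $\overline{(Uz)_j} = \sum_k \overline{U_{jk}}\,\overline{z_k}$ is linear in $\overline{z}$.

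For the statement about $\mathfrak{P}_{p,q}(\mathbb{S}_n)$, I would expand $(Uz)^\alpha \overline{(Uz)}^\beta$ via the multinomial theorem and reassemble the resulting sum; this produces a polynomial of exactly the form \eqref{(p,q)-writing} with new coefficients depending polynomially on the entries of $U$ and $\overline{U}$. Since $U$ preserves the sphere, i.e.\ $\|Uz\|_2 = \|z\|_2$ for all $z \in \mathbb{C}^n$, the restriction $f\circ U\restrict{\mathbb{S}_n}$ is well defined on $\mathbb{S}_n$ and therefore lies in $\mathfrak{P}_{p,q}(\mathbb{S}_n)$.

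The slightly more delicate part is the preservation of harmonicity. I would show that the Wirtinger Laplacian $\Delta = \sum_j \partial_{z_j}\partial_{\overline{z_j}}$ commutes with the pullback by a unitary. Writing $w = Uz$, the chain rule gives $\partial_{z_k}(f\circ U) = \sum_j U_{jk}(\partial_{w_j} f)\circ U$ and $\partial_{\overline{z_k}}(f\circ U) = \sum_j \overline{U_{jk}}(\partial_{\overline{w_j}} f)\circ U$. Composing these two operations and summing over $k$ the mixed terms $\sum_k U_{jk}\overline{U_{ik}} = (UU^*)_{ji} = \delta_{ji}$ collapse the double sum, yielding $\Delta(f\circ U) = (\Delta f)\circ U$. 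Consequently $\Delta f = 0$ implies $\Delta(f\circ U) = 0$, which combined with the first part gives $f\circ U \in \mathcal{H}_{p,q}(\mathbb{S}_n)$.

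I do not anticipate a genuine obstacle: the only care needed is to treat $f$ and $f\circ U$ as polynomials on $\mathbb{C}^n$ before restricting to the sphere, so that the Laplacian is well defined and the two expansions above make sense. The unitarity of $U$ is used in exactly two places, namely to ensure $U(\mathbb{S}_n)=\mathbb{S}_n$ in the first step and to force the identity $UU^* = \mathrm{Id}$ to appear in the Laplacian computation.
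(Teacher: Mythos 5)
Your proof is correct. The paper states this lemma without proof, citing it as a standard structural fact from the references, so there is no argument in the paper to compare against; your direct verification --- bidegree preservation because $(Uz)_j$ is linear in $z$ and $\overline{(Uz)_j}$ is linear in $\overline z$, plus commutation of the Wirtinger Laplacian with the unitary pullback via $\sum_k U_{jk}\overline{U_{ik}}=(UU^*)_{ji}=\delta_{ji}$ --- is exactly the standard argument and is carried out correctly, including the care taken to work with the bihomogeneous extensions to $\mathbb{C}^n$ before restricting to $\mathbb{S}_n$.
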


\begin{lemma} \label{ortogonales}
The spaces  $\mathcal{H}_{p,q}(\mathbb{S}^{n-1}) $ and  $\mathcal{H}_{r,s}(\mathbb{S}^{n-1}) $ are orthogonal whenever  $(p,q) \neq (r,s) $.
\end{lemma}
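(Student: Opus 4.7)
My plan is to establish orthogonality in the sense of the $L_2(\mathbb{S}_n,\sigma_n)$ inner product (the natural one relevant to the projections that drive this paper), splitting into two cases according to whether $p-q=r-s$ or not. The circle $\mathbb{T}=\{\lambda\in\mathbb{C}:|\lambda|=1\}$ sits inside $\mathcal{U}_n$ via scalar multiplication, and Lemma~\ref{lemma: invariance} together with the rotation invariance of $\sigma_n$ will handle one case, while Green's second identity will handle the other.

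\textbf{Case 1: $p-q\neq r-s$.} Here no harmonicity is needed. Since $f(\lambda z)=\lambda^{p-q}f(z)$ for $|\lambda|=1$ and $f\in\mathfrak{P}_{p,q}(\mathbb{C}^n)$, and similarly for $g$, the substitution $z\mapsto\lambda z$ yields
\begin{equation*}
\int_{\mathbb{S}_n} f\,\overline{g}\,d\sigma_n \,=\, \lambda^{(p-q)-(r-s)} \int_{\mathbb{S}_n} f\,\overline{g}\,d\sigma_n.
\end{equation*}
Choosing $\lambda\in\mathbb{T}$ with $\lambda^{(p-q)-(r-s)}\neq 1$ forces this integral to vanish.

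\textbf{Case 2: $p-q=r-s$ with $(p,q)\neq (r,s)$.} Then automatically $p+q\neq r+s$. Identifying $\mathbb{C}^n$ with $\mathbb{R}^{2n}$, the identity $\partial_{z_j}\partial_{\overline{z}_j}=\tfrac{1}{4}(\partial_{x_j}^2+\partial_{y_j}^2)$ shows that both $f$ and $\overline{g}$ are harmonic in the real Euclidean sense, and they are real-homogeneous of degrees $p+q$ and $r+s$, respectively. Euler's relation gives $\partial_\nu f=(p+q)f$ and $\partial_\nu\overline{g}=(r+s)\overline{g}$ on $\mathbb{S}_n$, so Green's second identity on the unit ball $B\subset\mathbb{R}^{2n}$ produces
\begin{equation*}
0 \,=\, \int_B \big(f\,\Delta\overline{g}-\overline{g}\,\Delta f\big)\,dV \,=\, \big((r+s)-(p+q)\big)\int_{\mathbb{S}_n} f\,\overline{g}\,dS,
\end{equation*}
and since $dS$ is a constant multiple of $d\sigma_n$ and the prefactor is nonzero, orthogonality follows.

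The main subtlety I anticipate is the dictionary between the complex Laplacian used to define $\mathcal{H}_{p,q}$ and the real Laplacian on $\mathbb{R}^{2n}$ entering Green's identity, together with the check that conjugation preserves (real) harmonicity and that the correct real degree of homogeneity is $p+q$. Once this is arranged, both computations are short. The case split is genuinely needed: the $\mathbb{T}$-action cannot distinguish parameters on a fixed diagonal $p-q=\mathrm{const}$, and Green's identity alone does not separate those on a fixed antidiagonal $p+q=\mathrm{const}$.
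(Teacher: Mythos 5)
Your proof is correct. The paper states Lemma~\ref{ortogonales} without proof, as one of the standard preliminaries imported from the references (e.g.\ Rudin's book and Klima's thesis), and the argument given there is exactly your two-case scheme: the $\mathbb{T}\subset\mathcal{U}_n$ scalar action together with the rotation invariance of $\sigma_n$ separates spaces with different $p-q$, and Green's identity on the ball (after translating the complex Laplacian $\sum_j\partial_{z_j}\partial_{\overline z_j}=\tfrac14\Delta_{\mathbb{R}^{2n}}$ and using Euler's relation for real-homogeneity of degree $p+q$) separates spaces with different $p+q$. Your observation that the two cases are exhaustive --- since $p-q=r-s$ and $p+q=r+s$ force $(p,q)=(r,s)$ --- closes the argument, so nothing is missing.
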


The upcoming theorem is very important; it provides a decomposition of bihomogeneous polynomials into harmonic components (see, e.g.,
\cite[Theorem 1]{bezubik2013spherical} or \cite{bezubik2008spherical,ikeda1968expansion,Koornwinder1972,vilenkin_shapiro_1963}).

\begin{theorem} \label{ludo13A-OKOK}
For $n \geq 2$ and  $p,q \geq 1 $
\begin{equation*} \label{ru5ru5}
\mathfrak{P}_{p,q} (\mathbb{S}^{n-1})=\bigoplus_{j=0}^{\min(p,q)}\mathcal{H}_{p-j, q-j}(\mathbb{S}^{n-1})\,,
\end{equation*}
where the  decomposition is   orthogonal in  $L_2(\mathbb{S}^{n-1}) $.
\end{theorem}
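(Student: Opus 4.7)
The plan is to obtain the decomposition first algebraically on all of $\mathbb{C}^n$ and only at the end transfer everything to $\mathbb{S}_n$. Introduce the positive polynomial $|z|^2:=\sum_{j=1}^n z_j\overline{z}_j \in \mathfrak{P}_{1,1}(\mathbb{C}^n)$, and observe that multiplication by $|z|^2$ is a linear map $\mathfrak{P}_{p-1,q-1}(\mathbb{C}^n) \to \mathfrak{P}_{p,q}(\mathbb{C}^n)$, while the Laplacian $\Delta$ maps $\mathfrak{P}_{p,q}(\mathbb{C}^n)$ into $\mathfrak{P}_{p-1,q-1}(\mathbb{C}^n)$ with kernel exactly $\mathcal{H}_{p,q}(\mathbb{C}^n)$.

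The central step is to prove the single-level splitting
\begin{equation*}
\mathfrak{P}_{p,q}(\mathbb{C}^n) \,=\, \mathcal{H}_{p,q}(\mathbb{C}^n) \,\oplus\, |z|^2 \,\mathfrak{P}_{p-1,q-1}(\mathbb{C}^n),
\end{equation*}
where the direct sum is orthogonal with respect to the Fischer inner product $\langle z^\alpha \overline{z}^\beta, z^\gamma \overline{z}^\delta\rangle_{\mathrm{F}} := \alpha!\,\beta!\,\delta_{\alpha,\gamma}\,\delta_{\beta,\delta}$. A direct monomial calculation shows that multiplication by $z_j$ is Fischer-adjoint to $\partial_{z_j}$ and multiplication by $\overline{z}_j$ to $\partial_{\overline{z}_j}$, so multiplication by $|z|^2$ is adjoint to $\Delta$. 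The standard identity $\mathrm{range}(T)^\perp=\ker(T^*)$ for finite-dimensional operators then gives the claim, and the injectivity of multiplication by $|z|^2$ is automatic by rank-nullity.

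Iterating this identity $\min(p,q)$ times yields
\begin{equation*}
\mathfrak{P}_{p,q}(\mathbb{C}^n)\;=\;\bigoplus_{j=0}^{\min(p,q)-1} |z|^{2j}\,\mathcal{H}_{p-j,q-j}(\mathbb{C}^n) \,\oplus\, |z|^{2\min(p,q)}\,\mathfrak{P}_{p-\min(p,q),\,q-\min(p,q)}(\mathbb{C}^n).
\end{equation*}
The last summand is either $\mathfrak{P}_{p-q,0}(\mathbb{C}^n)$ or $\mathfrak{P}_{0,q-p}(\mathbb{C}^n)$, which are purely (anti)holomorphic and hence annihilated by $\Delta$, so they are already contained in $\mathcal{H}(\mathbb{C}^n)$, closing the recursion. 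Restricting every identity to $\mathbb{S}_n$, where $|z|^2\equiv 1$, each factor $|z|^{2j}\mathcal{H}_{p-j,q-j}(\mathbb{C}^n)$ collapses onto $\mathcal{H}_{p-j,q-j}(\mathbb{S}_n)$, producing the sum stated in the theorem; orthogonality in $L_2(\mathbb{S}_n)$ is immediate from Lemma~\ref{ortogonales}, which simultaneously upgrades the algebraic sum to a genuine orthogonal direct sum.

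The main obstacle I anticipate is the Fischer adjointness bookkeeping and the tracking of the recursion through the two asymmetric regimes $p\geq q$ and $p\leq q$. As a safeguard, one can verify the dimensional identity $\binom{n+p-1}{p}\binom{n+q-1}{q}=\sum_{j=0}^{\min(p,q)} N_{n,p-j,q-j}$ (from \eqref{dimfor-comp} by a direct combinatorial manipulation), which confirms that the decomposition exhausts $\mathfrak{P}_{p,q}(\mathbb{C}^n)$ and leaves no redundancy, making the proof essentially self-correcting.
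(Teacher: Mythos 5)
Your argument is correct, and it is worth noting that the paper does not prove this theorem at all: it is quoted from the literature (Bezubik--Strasburger, Koornwinder, Ikeda--Seto, Vilenkin--Shapiro). What you supply is a self-contained proof via the classical Fischer decomposition, which is essentially the mechanism underlying those references: the adjointness of $M_{|z|^2}$ and $\Delta$ with respect to the Fischer pairing, the resulting orthogonal splitting $\mathfrak{P}_{p,q}(\mathbb{C}^n)=\mathcal{H}_{p,q}(\mathbb{C}^n)\oplus |z|^2\mathfrak{P}_{p-1,q-1}(\mathbb{C}^n)$ from $\operatorname{range}(T)^{\perp}=\ker(T^*)$, the iteration terminating because purely (anti)holomorphic polynomials are annihilated by $\Delta=\sum_j \partial_{z_j}\partial_{\overline{z}_j}$, and the collapse $|z|^{2j}\equiv 1$ on the sphere followed by Lemma~\ref{ortogonales} to upgrade to an $L_2(\mathbb{S}_n)$-orthogonal sum. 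All of these steps are sound. Two small remarks: injectivity of multiplication by $|z|^2$ is not a consequence of rank--nullity (which says nothing about injectivity on its own) but of the fact that $\mathbb{C}[z_1,\dots,z_n,\overline{z}_1,\dots,\overline{z}_n]$ is an integral domain --- and in any case injectivity is not needed for the decomposition itself, only for the dimension count you use as a cross-check; and the Fischer orthogonality you establish on $\mathbb{C}^n$ is of course unrelated to the $L_2(\mathbb{S}_n)$ orthogonality asserted in the theorem, but you correctly source the latter from Lemma~\ref{ortogonales} rather than trying to transport the former. The net gain of your route over the paper's is a short, elementary, fully self-contained argument in place of an external citation.
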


We conclude  this preliminary section with an important density theorem, whose real counterpart can be found in
\cite[Corollary 12.1.3]{rudin1980}.

\begin{theorem} \label{den-matrixA}
 The space $\spa_{p,q \geq 0}\mathcal H_{p,q}(\mathbb{S}^{n-1}) $ is dense in  $C(\mathbb{S}^{n-1}) $.
\end{theorem}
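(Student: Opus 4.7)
The plan is to combine Theorem \ref{ludo13A-OKOK} with the complex Stone--Weierstrass theorem applied to the algebra of polynomials in $z$ and $\bar z$ restricted to the sphere. Density via Stone--Weierstrass is the natural route here, and the only real work is converting the statement about $\mathcal{H}_{p,q}$ into one about $\mathfrak{P}_{p,q}$.

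First, I would establish the equality
\[
\spa_{p,q\geq 0}\mathcal{H}_{p,q}(\mathbb{S}_n) \,=\, \spa_{p,q\geq 0}\mathfrak{P}_{p,q}(\mathbb{S}_n).
\]
The inclusion $\subseteq$ is trivial since $\mathcal{H}_{p,q}(\mathbb{S}_n)\subset \mathfrak{P}_{p,q}(\mathbb{S}_n)$. For the reverse inclusion, Theorem \ref{ludo13A-OKOK} handles the case $p,q\geq 1$. The edge cases $p=0$ or $q=0$ are immediate: if $f\in \mathfrak{P}_{p,0}(\mathbb{C}^n)$ then $f$ depends only on $z$, so every term $\partial_{\bar z_j}f$ vanishes and hence $\Delta f=0$, giving $\mathfrak{P}_{p,0}(\mathbb{S}_n)=\mathcal{H}_{p,0}(\mathbb{S}_n)$; symmetrically for $q=0$.

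Next, I would identify $\mathcal{A}:=\spa_{p,q\geq 0}\mathfrak{P}_{p,q}(\mathbb{S}_n)$ with the restriction $\mathfrak{P}(\mathbb{C}^n)\big|_{\mathbb{S}_n}$ and check the hypotheses of the complex Stone--Weierstrass theorem for $\mathcal{A}\subset C(\mathbb{S}_n)$. The algebra property is immediate from the monomial identity $(z^{\alpha}\bar z^{\beta})(z^{\alpha'}\bar z^{\beta'})=z^{\alpha+\alpha'}\bar z^{\beta+\beta'}$; constants belong to $\mathfrak{P}_{0,0}(\mathbb{S}_n)$; closure under complex conjugation follows from $\overline{z^{\alpha}\bar z^{\beta}}=\bar z^{\alpha} z^{\beta}$, which lies in $\mathfrak{P}_{|\beta|,|\alpha|}(\mathbb{C}^n)$; and the coordinate functions $z_1,\dots,z_n\in \mathcal{A}$ trivially separate points of $\mathbb{S}_n$. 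Applying Stone--Weierstrass gives $\overline{\mathcal{A}}=C(\mathbb{S}_n)$, and combined with the first step this yields the desired density.

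There is no real obstacle: the proof is essentially a routine Stone--Weierstrass argument, and the only mildly subtle point is recognising that Theorem \ref{ludo13A-OKOK}, together with the trivial observation that purely holomorphic and purely antiholomorphic bihomogeneous polynomials are automatically harmonic, is enough to bridge between the harmonic and the full bihomogeneous spans.
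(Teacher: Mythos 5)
Your proof is correct and complete. Note that the paper itself offers no proof of Theorem~\ref{den-matrixA} at all --- it merely remarks that the real counterpart can be found in \cite[Corollary 12.1.3]{rudin1980} --- so you are supplying the argument the authors delegate to the literature. What you give is the standard route: the identity $\spa_{p,q}\mathcal H_{p,q}(\mathbb{S}_n)=\spa_{p,q}\mathfrak{P}_{p,q}(\mathbb{S}_n)$ via Theorem~\ref{ludo13A-OKOK} (with the correct observation that $\mathfrak{P}_{p,0}=\mathcal H_{p,0}$ and $\mathfrak{P}_{0,q}=\mathcal H_{0,q}$ handle the boundary cases, a fact the paper also uses when identifying $\mathcal P_p(\mathbb{S}_n)$ with $\mathcal H_{p,0}(\mathbb{S}_n)$), followed by the complex Stone--Weierstrass theorem applied to the conjugation-closed, point-separating, unital algebra $\mathfrak{P}(\mathbb{C}^n)\restrict{\mathbb{S}_n}$. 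All hypotheses are verified correctly and there is no circularity with the later Rudin-triple machinery, so nothing is missing.
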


\section{Rudin's Framework: Paving the Way to an Integral Formula} \label{la Rudin} 

We now give an abstract approach  which allows us to express
the projection constants of the spaces under consideration in terms of an $L_1$-norm of a
reproducing kernel, which in the present setting is naturally related to Jacobi polynomials.

The framework outlined here relies on the compact space $K$ having some form of group structure and emerges from isolating techniques developed by Rudin in the study of minimal projections through averaging techniques, as stated in Rudin's works \cite{rudin1962projections, rudin1986new}.  This systematic development may prove beneficial in various other contexts.

We consider  triples  $\big((K,\mu),(G,\mathrm{m}),\varphi\big)$, where
\begin{itemize}
  \item
  $K$ is a compact space together with a Borel probability measure $\mu$ on $K$\,,
  \item
  $G$ is a compact  group with  its Haar measure  $\mathrm{m}$\,,
  \item
  and  $\varphi$ a multiplicative and continuous mapping from $G$ into
  $\text{Hom}(K)$, the space  of all  homeomorphisms from $K$ onto $K$, that is,
  \[
\varphi: G \longrightarrow \text{Hom}(K)\,, \quad g \mapsto \varphi_g\,,
\]
is such that    $\varphi_{gh} = \varphi_g \circ \varphi_h$ for all $g,h\in G $ and   the mapping  $G \times K \ni (g, x) \mapsto \varphi_gx \in K$ is continuous.

\end{itemize}
A subspace $S$ of $C(K)$ is said to be $(G,\varphi)$-invariant
($\varphi$-invariant for short), whenever $ f \circ \varphi_g \in S $ for all $g \in G$ and $f \in S$.

\begin{definition}
\label{def:rudin_triple}
A triple  $\big((K,\mu),(G,\mathrm{m}),\varphi\big) $ is called a Rudin triple if it satisfies the following conditions\:
\begin{itemize}
\item[ $\mathbf{R1} $]
        For all $x, y \in K$  there is $g \in G$  such that $\varphi_g x = y$\,,
    \item[ $\mathbf{R2} $]
        For all $g \in G$ and all $f \in C(K)$ we  have
        $\int_K f \circ \varphi_g \, d\mu = \int_K f \, d\mu $\,,
    \item[ $\mathbf{R3} $] There exists a family $(S_j)_{j\in I}$ of finite-dimensional $\varphi$-invariant subspaces of $C(K)$ whose linear span is dense in $C(K)$.
\end{itemize}

\end{definition}

 The following lemma shows that for any $y \in K$ the pushforward measure under the mapping from $G$ to $K$ given by $G \ni g \mapsto \varphi_g(y)\in K$ coincides with 
$\mu$.

\begin{lemma} \label{greatrudy}
Let $\big((K,\mu),(G,\mathrm{m}),\varphi\big)$ be a Rudin triple. Then for all $f \in C(K)$ and $y \in K$
  \[
  \int_K f d\mu = \int_G  f\big(\varphi_gy\big)\, d\mathrm{m}(g) \,.
  \]
 \end{lemma}

\begin{proof}
Let $y \in K$ be given. Since the mapping 
$G \times K \longrightarrow K\,, \,\, (g,x) \mapsto f(\varphi_gx)$  is continuous,  Fubini's theorem combined with the condition $\mathbf{R2} $ yields the following
\[
\int_K \int_G f\big(\varphi_gx\big)\,  d\mathrm{m}(g)d\mu(x)=\int_G \int_K f\big(\varphi_gx\big)\, d\mu(x)d\mathrm{m}(g) = \int_K f(x)\,d\mu(x)\,.
\]
Fixing $x\in K$, there is $h_x \in G$ such that $\varphi_{h_x}y = x$, and hence by condition $\mathbf{R1} $, the multiplicativity of $\varphi$,
and the invariance of the Haar measure $\mathrm{m}$
\[
\int_G f\big(\varphi_gx\big)\, d\mathrm{m}(g) = \int_G f\big(\varphi_g(\varphi_{h_x}y)\big)\, d\mathrm{m}(g)
 = \int_G f\big(\varphi_{h_x g}y\big)\, d\mathrm{m}(g) = \int_G f\big(\varphi_gy\big)\, d\mathrm{m}(g)\,,
\]
and so
\[
\int_K \int_G f\big(\varphi_gx\big) \, d\mathrm{m}(g)d\mu(x) = \int_G f\big(\varphi_gy\big) \, d\mathrm{m}(g)\,. \qedhere
\]
\end{proof}

\smallskip

\begin{remark} \label{schlau}
 For future reference, it is important to observe that the preceding lemma extends its applicability to a continuous function $f$ on $K$ with values in a Banach space $X$, wherein the conventional integrals are to be replaced by the corresponding Bochner integrals. To establish this, apply the earlier scalar formula to all functions $x^\ast \circ f$, where $x^\ast \in X^\ast$, and invoke the Hahn-Banach theorem for the proof.
\end{remark}

\subsection{Reproducing kernels} \label{inva}

In this section we adapt some well-known facts 
on reproducing kernels to our setting of Rudin triples.
All arguments are rather standard and here repeated for the sake of completeness.

For $L_2(\mu)$-closed subspaces $S$ of $L_2(\mu)$ we denote by $\pi_S$  the orthogonal
projection from $L_2(\mu)$ onto~$S$.
The following lemma, which can be proved easily, shows that the orthogonal projection $\pi_S$  is $G$-equivariant. 

\begin{lemma}\label{lemma: pi_S commutes with L_V}
Let $\big((K,\mu),(G,\mathrm{m}),\varphi\big)$ be a Rudin triple and let $S$ be a 
$\varphi$-invariant, finite dimensional subspace
of \, $C(K)$. Then $\pi_S$ commutes with the action which
$\varphi$ induces on $C(K)$, that is, 
for all $f \in C(K)$ and $g \in G$
\begin{equation*}
    \pi_S(f\circ \varphi_g) = \pi_S(f)\circ \varphi_g\,.
\end{equation*}
\end{lemma}

The following is standard and can essentially also be found in \cite[Lemma 2.4. and Remark 2.5.]{defant2024minimal}.

\qedhere


\smallskip

\begin{lemma}\label{theorem kernel0}
Let $\big((K,\mu),(G,\mathrm{m}),\varphi\big)$ be a Rudin triple and  $S$ a $\varphi$-invariant, finite dimensional subspace of $C(K)$.
 Then there is a unique continuous function $\mathbf{k}_S: K\times K \to \mathbb C$ such that
for all $f\in L_2(\mu)$  and $x \in K$
\begin{itemize}
\item[(i)]
$
(\pi_Sf)(x)=\big\langle f, \mathbf{k}_S(x, \cdot)\big\rangle_{L_2(\mu)}\,.
$
\end{itemize}
Moreover, the following properties hold{\rm:}
\begin{itemize}
\item[(ii)]
$\mathbf{k}_S(x, \cdot)\in S$\,\,for all $x \in K$\,,
\item[(iii)]
$\mathbf{k}_S(x, y)=\overline{\mathbf{k}_S(y, x)}$ \,\,for all $x,y \in K$\,,
\item[(iv)]
$\mathbf{k}_S(\varphi_gx,\varphi_gy)
=
\mathbf{k}_S(x,y)
$
\,\, for all $g \in G$ and $x  \in K$\,,
\item[(v)]
$\mathbf{k}_S(x,x) = \dim S$ \,\, for all  $x \in K$\,,
\item[(vi)]
$\big(\int_K |\mathbf{k}_S(x, y)|^2 d\mu(y)\big)^{\frac{1}{2}}= \sqrt{\dim S}$\,\, for all  $x \in K$\,.
\item[(vii)] 
  $
 \big\|\pi_S:C(K) \to S\big\|=\int_{K }|\mathbf{k}_S(x,y) |\, d\mu(y) \,$
for all $x\in K$.
\end{itemize}
\end{lemma}

\smallskip

\subsection{Accessability} \label{access}

Given a Rudin triple $\big((K,\mu),
(G, \mathrm{m}),\varphi\big)$, we  say that the 
 $\varphi$-invariant, finite dimensional subspace $S$ of $C(K)$ is accessible whenever the restriction of  the orthogonal projection $\pi_S\restrict{C(K)}$ is the unique projection that in the following sense 'harmonizes' with the group representation which $\varphi$
induces on $K$. Namely, $\pi_S\restrict{C(K)}$ is the unique $G$-equivariant projection from $C(K)$ onto $S$.

\begin{definition}
Let $\big((K,\mu),
(G, \mathrm{m}),\varphi\big)$ be a~Rudin triple.
A finite-dimensional subspace $S$ of $C(K)$ is termed  accessible if it is $\varphi$-invariant and  $\pi_S\restrict{C(K)}$ is the unique projection $\mathbf{Q}$ on $C(K)$ onto $S$ such that
$$\mathbf{Q} (f \circ \varphi_g) = \mathbf{Q}(f) \circ \varphi_g$$
for all $f \in S, g \in G$.
\end{definition}

The following result is one of our main abstract tools. It shows how to compute the projection constant of a subspace $S$ in terms of $L_1$-norm of  the kernel $\mathbf{k}_S(x,\cdot)$ for  $x\in K$.

\begin{theorem} \label{main-ibk}
Let $\big((K,\mu),(G,\mathrm{m}),\varphi\big)$ be a Rudin triple and  $S$  a $\varphi$-invariant, finite dimensional accessible subspace $S$ of $C(K)$. Then
  for all $x \in K$ we have \[
\boldsymbol{\lambda}(S) =  \big\|\pi_S:C(K) \to S\big\|=\int_{K }|\mathbf{k}_S(x,y) |\,d\mu(y) \,.
\]
\end{theorem}

The proof uses the standard technique of averaging projections; see, e.g.~\cite{rudin1962projections} or \cite[Theorem III.B.13]{wojtaszczyk1996banach}, and follows almost verbatim the argument in \cite[Theorem 3.1]{defant2024minimal}.


\smallskip

Motivated by the practical challenges in verifying the least restrictive definition of accessibility, we now introduce a stronger notion that implies accessibility, hence its name.  Notably, for the applications we will explore later, this stronger notion is the one we will actually be employing.
\smallskip

\begin{definition}\label{strongaccesibility}
Let $\big((K,\mu),
(G, \mathrm{m}),\varphi\big)$ be a Rudin triple.
We say that a  $\varphi$-invariant, finite dimensional subspace $S$ of  $C(K)$ is  strongly accessible with respect to $x_0 \in K$, whenever  the following holds:
  \begin{equation}\label{point}
    \text{$\forall$    $f \in S$ for which $f \circ \varphi_g = f$ for all $g \in G$ with $\varphi_g x_0 = x_0$
    \, $\exists$ $\lambda \in \mathbb{C}$  $\colon$ \,   $f = \lambda \,\mathbf{k}_{S}(x_0, \cdot) $.}
  \end{equation}
  We call $S$  strongly accessible whenever it is strongly accessible with respect to some  $x_0 \in K$.
\end{definition}

Now we see how this definition implies accessibility.

\begin{theorem} \label{ac-strongacB}
Let $\big((K,\mu),(G,\mathrm{m}),\varphi\big)$ be a Rudin triple and let $S$ be a $\varphi$-invariant, finite dimensional subspace of  $C(K)$.
  If $S$ is strongly accessible, then it is accessible.
\end{theorem}

The proof requires  preparation.

\begin{lemma}\label{ac-lemmaB}
Let $\big((K,\mu),(G,\mathrm{m}),\varphi\big)$ be a Rudin triple and let $H$ and $S$ be both finite dimensional and $\varphi$-invariant subspaces of $C(K)$.
Then  every operator $T\colon H\to  S$ that commutes with the action of $G$, is a scalar multiple of   $\pi_S$, provided $S$ is strongly accessible.

Moreover, if $H$ is  orthogonal to $S$, and $\mathbf{Q}$ is  
a~projection from $H \bigoplus S$ onto $S$ that commutes with the action of
$G$
on $C(K)$,  then $\mathbf{Q}=\pi_{S}\restrict{H \bigoplus S}.$
\end{lemma}

\begin{proof}
Let $S$ be strongly accessible with respect to   $x_0 \in K$. By the assumption on $T$
and Lemma~\ref{theorem kernel0}, (iv)  for  every $g \in G$ with $\varphi_gx_0=x_0$ one has
\[
T \big(\mathbf{k}_H(x_0, \cdot)\big)
\circ \varphi_g
= T \big(\mathbf{k}_H(x_0, \varphi_g (\cdot)) \big)
= T \big(\mathbf{k}_H({\varphi_g^{-1}x_0}, \cdot)  \big) = T \big(\mathbf{k}_H(x_0, \cdot)\big)\,,
\]
and hence by the definition of strong accessability (as in  \eqref{point}) there is $\gamma \in \mathbb{C}$ for which
\begin{equation}\label{firststep}
T  \mathbf{k}_H(x_0, \cdot) = \gamma\,  \mathbf{k}_S(x_0, \cdot)\,.
\end{equation}
On the other hand, we conclude from Lemma~\ref{theorem kernel0} (i),(ii) that for all $h \in H$
\[
h= \pi_H h = \int_{K} h(x) \,\mathbf{k}_H(x, \cdot)\,d\mu(x)\,,
\]
where the integral is meant to be  the Bochner integral of the vector-valued function 
$$
K \longrightarrow C(K), \quad x \mapsto \big[\zeta \mapsto h(x) \mathbf{k}_H(x, \zeta)\big]\,.
$$
 Then by Lemma~\ref{greatrudy} (in the form of Remark~\ref{schlau}) and again Lemma~\ref{theorem kernel0}, (iii) we for all $h \in H$
 have
\begin{align*}
  h =
  &
   \int_{G} h(\varphi_gx_0)
      \mathbf{k}_H(\varphi_gx_0, \cdot)\, d\mathrm{m}(g)
      =
   \int_{G} h(\varphi_gx_0)  \mathbf{k}_H(x_0, \cdot)
    \circ \varphi_g^{-1}\,d\mathrm{m}( (g)\,,
\end{align*}
so that 
\begin{align*}
  Th
  &
  =
     \int_{G} h(\varphi_gx_0) \, T\big(\mathbf{k}_H(x_0, \cdot) \circ \varphi_g^{-1}\big) d\mathrm{m}( (g)
     \\&
          =
     \int_{G} h(\varphi_gx_0)\, T\big(\mathbf{k}_H(x_0, \cdot)\big) \circ \varphi_g^{-1}\,d\mathrm{m}( (g)
     \\&
         =
     \gamma \,
     \int_{G} h(\varphi_gx_0) \, \mathbf{k}_S(x_0, \cdot)
      \circ \varphi_g^{-1}\,d\mathrm{m}( (g)
      \\&
          =
          \gamma\, \int_{G} h(\varphi_gx_0) \, \mathbf{k}_S(\varphi_gx_0, \cdot)\,d\mathrm{m}( (g)
          =
     \gamma \,
     \int_{K} h(x) \,\mathbf{k}_S(x, \cdot)\, d\mu(x)
      = \gamma \, \pi_S(h)
     \,;
            \end{align*}
        here we again use that $T$ commutes with the action of $G$ on $C(K)$, Equation~\eqref{firststep},
Lemma~\ref{greatrudy} (in the form of Remark~\ref{schlau}), and   Lemma~\ref{theorem kernel0}, (i), (iii (iv).
This proves the first part of the proposition.

To see the second assertion, note  that by  the first part of the lemma  we have $\mathbf{Q}\restrict{H}=\gamma \,\pi_S\restrict {H}$ for some
$\gamma  \in \mathbb{C}$. But since by assumption $H\subset S^\perp$, this implies $\mathbf{Q}\restrict{H}= 0=\pi_{S}\restrict {H}$.
 On the other hand,  since $Q$ is a~projection onto $S$, we see that  $\mathbf{Q}\restrict{S}=Id_{S}=\pi_S\restrict {S}$, which finishes
 the proof.
  \end{proof}

\smallskip

\begin{proof}[Proof of Theorem~\ref{ac-strongacB}]
Let $\mathbf{Q}$  be a projection from $C(K)$ onto $S$, which  commutes with the action of
$G$
on $C(K)$.
By  the (density) assumption $\mathbf{R3} $ from Section~\ref{la Rudin}, it suffices to show that for each
$j~\in~I$
\[
\mathbf{Q}\restrict{S_j} = \pi_S\restrict{S_j}\,.
\]
 Fixing $\iota$, we define the subspace
\[
 H := \big\{f-\pi_Sf\,:\,\,f\in S_j \big\} \subset C(K)\,.
\]
Then $H$ is  $\varphi$-invariant; indeed, by the facts that $\pi_S$ commutes with $G$
and $S_j $ is $\varphi$-invariant, for every   $f\in S_j $
and $g \in G$, we have
\[
(f-\pi_S f)\circ \varphi_g=f\circ \varphi_g -\pi_S f\circ \varphi_g= f\circ \varphi_g -\pi_S(f\circ \varphi_g)\in H\,.
\]
 Since $H\perp S$ and $\mathbf{Q}$ commutes with the action of $G$
 on $C(K)$,
 Lemma~\ref{ac-lemmaB} (the second part applied to the restriction  of $\mathbf{Q}$ to $H \oplus S$) shows  that
 $\mathbf{Q}\restrict{H \oplus S}=  \pi_S\restrict{H \oplus S}\,,$
  so  in particular
 $
     \mathbf{Q}\restrict{H}=  \pi_S\restrict{H} = 0\,.
     $
But then
$
   \mathbf{Q}f = \mathbf{Q}(f-\pi_S f)+ \mathbf{Q}(\pi_Sf) =   \pi_Sf
  $
  for  $f \in S_j$,
  which completes the argument.
  \end{proof}

Accessibility is inherited under finite orthogonal sums and under conjugation. The proof follows verbatim the argument in \cite[Proposition 3.2]{defant2024minimal}.

\smallskip

\begin{proposition} \label{one}
Under the general assumptions on $K$ of\, Section~\ref{la Rudin}, a finite $L_2(\mu)$-orthogonal sum $S=\bigoplus S_k$ of accessible subspaces $S_k$ is  again accessible,
  and $\mathbf{k}_S(x, \cdot) = \sum \mathbf{k}_{S_k}(x, \cdot) $ for all $x \in K$.
  In particular, for all $x \in K$
\begin{equation*}\label{integral formula sumA}
\boldsymbol{\lambda}(S_1 \oplus S_2) = \big\|\pi_{S_1} + \pi_{S_2}:C(K) \to S_1 \oplus S_2  \big\|
=
 \int_{K}\big|\mathbf{k}_{S_1}(x, \cdot) +  \mathbf{k}_{S_2}(x, \cdot)\big|\, d\mu
\,.
\end{equation*}
 Moreover,
  if $S$ is accessible, then its conjugate $\overline{S}$ is accessible, and \,
    $\mathbf{k}_{\overline{S}}(x, \cdot) = \overline{\mathbf{k}_{S}(x, \cdot)}$
    for all $x \in K$\,.
\end{proposition}

\subsection{A concrete Rudin triple and its application to our setting} For our purposes, we consider a simple setting. For  $K $, we use the sphere  $\mathbb{S}^{n-1} $ with its  measure  $\sigma_n $. We take the (non-abelian, unimodular) compact group  $G := \mathcal{U}_n $ of all unitary operators on the complex Hilbert space  $\ell^n_2 $, and denote its Haar measure by  $\mathrm{m} $.  We also consider the multiplicative mapping
\[
\varphi \colon \mathcal{U}_n \to \text{Hom}(\mathbb{S}^{n-1}), \quad U \mapsto [z \mapsto Uz]\,.
\]
It is easy to show that the three properties  $\mathbf{R1} $,  $\mathbf{R2} $, and  $\mathbf{R3} $ in Definition \ref{def:rudin_triple} are satisfied, so  $$\big((\mathbb{S}^{n-1},\sigma_n),(\mathcal{U}_n,\mathrm{m}),\varphi\big)$$ is  a Rudin triple.
Indeed, the properties $ \mathbf{R1} $ and $ \mathbf{R2} $ are standard. The density condition $ \mathbf{R3} $ also holds true: from Lemma~\ref{lemma: invariance} and Theorem~\ref{den-matrixA}, we know that all spaces $ \mathcal{H}_{p,q}(\mathbb{S}^{n-1}) $ form a countable family of $ \mathcal{U}_n $-invariant subspaces with a dense union in $ C(\mathbb{S}^{n-1}) $.

As a consequence of Theorem~\ref{main-ibk}, we derive the following important result, which we state explicitly for future references.

\begin{theorem} \label{abstractS}
Let  $S $ be a  $\mathcal U_n $-invariant, finite dimensional  subspace of   $C(\mathbb{S}^{n-1}) $, which is accessible. Then
\[
\boldsymbol{\lambda}(S) =  \big\|\pi_{S} \colon C(\mathbb{S}^{n-1}) \to S\big\|=\int_{\mathbb{S}^{n-1} }\big|\mathbf{k}_S(e_1,z) \big| d\sigma_n (z)\,,
\]
where  $e_1 $ may be replaced by any other vector in  $\mathbb{S}^{n-1} $.
\end{theorem}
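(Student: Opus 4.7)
The plan is to specialize directly the abstract formula~\eqref{theorem:accessibility} to the Rudin triple $\big((\mathbb{S}_n,\sigma_n),(\mathcal{U}_n,\mathrm{m}),\varphi\big)$. All the hard work has already been done: the triple is shown to satisfy $\mathbf{R1}$, $\mathbf{R2}$, $\mathbf{R3}$ in the discussion just preceding the theorem, and $S$ is assumed $\mathcal{U}_n$-invariant (i.e., $\varphi$-invariant) and accessible. So the hypotheses of the abstract theorem are verified verbatim.

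First, I would instantiate~\eqref{theorem:accessibility} at the point $x=e_1\in\mathbb{S}_n$ to obtain
\[
\boldsymbol{\lambda}(S) \;=\; \big\|\pi_S\colon C(\mathbb{S}_n)\to S\big\| \;=\; \int_{\mathbb{S}_n}\big|\mathbf{k}_S(e_1,z)\big|\,d\sigma_n(z),
\]
which already gives the two claimed equalities.

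Second, to justify that $e_1$ can be replaced by any $w\in\mathbb{S}_n$, I would combine three facts from the preceding material. By transitivity $\mathbf{R1}$ of the $\mathcal{U}_n$-action on $\mathbb{S}_n$, pick $U\in\mathcal{U}_n$ with $\varphi_U w = Uw = e_1$. By property (iv) of Theorem~\ref{propertiesreproducing}, $\mathbf{k}_S(w,z)=\mathbf{k}_S(\varphi_U w,\varphi_U z)=\mathbf{k}_S(e_1,Uz)$. Finally, by the unitary invariance of $\sigma_n$ (condition $\mathbf{R2}$), the change of variables $z\mapsto Uz$ preserves the integral, giving
\[
\int_{\mathbb{S}_n}\big|\mathbf{k}_S(w,z)\big|\,d\sigma_n(z) \;=\; \int_{\mathbb{S}_n}\big|\mathbf{k}_S(e_1,Uz)\big|\,d\sigma_n(z) \;=\; \int_{\mathbb{S}_n}\big|\mathbf{k}_S(e_1,z')\big|\,d\sigma_n(z').
\]
Thus the base point is irrelevant, as asserted.

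There is essentially no obstacle here: the statement is a direct corollary of the abstract machinery already quoted, and the only thing the present theorem adds is to record the specialization to the triple relevant for the rest of the paper. The only minor point worth being explicit about is the base-point independence, which requires chaining together $\mathbf{R1}$, the kernel's equivariance from Theorem~\ref{propertiesreproducing}(iv), and the invariance $\mathbf{R2}$ of $\sigma_n$, as above.
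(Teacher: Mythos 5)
Your proposal is correct and matches the paper's treatment: the theorem is stated there as an immediate specialization of the abstract formula~\eqref{theorem:accessibility} to the Rudin triple $\big((\mathbb{S}_n,\sigma_n),(\mathcal{U}_n,\mathrm{m}),\varphi\big)$, with no further proof given. Your extra verification of base-point independence via Theorem~\ref{propertiesreproducing}(iv) and $\mathbf{R2}$ is sound, though strictly redundant since the abstract formula already holds for every $x\in K$.
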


Part of the upcoming work involves demonstrating that the subspaces we consider are accessible, and, in addition, we aim to understand how their respective associated reproducing kernels can be expressed.

\bigskip

\section{Jacobi Polynomials as a Foundation for Reproducing Kernels}

Our objective is now to derive explicit formulas for the reproducing kernels associated with the spaces
$\mathcal{H}_{p,q}(\mathbb{S}^{n-1}) $ and  $\mathfrak{P}_{p,q}(\mathbb{S}^{n-1}) $.

Given the parameters  $p,q $, we will show the existence and uniqueness of a~$(p,q) $-bihomogeneous harmonic polynomial
in $n$ variables that is invariant under all orthogonal transformations preserving the vector  $e_1 \in \mathbb{S}^{n-1} $,
and which takes the value 1 at  $e_1$. This result can be compared to the related discussion found in
\cite[Section~2.1.2]{atkinson2012spherical}.

While the results in this section may look familiar, we have opted to include complete proofs for the sake of clarity. It
is important to note that we cannot directly rely on previously established results without adaptation. In many cases, the
arguments require subtle adjustments. Therefore, we believe that providing a thorough and self-contained treatment will be
beneficial, sparing the reader from constantly referring to other sources for missing details.

Before proceeding, we will establish some notation. To simplify our expressions, we as usual define
$p \wedge q := \min\{p, q\}$ and  $p \vee q := \max\{p, q\}$.

Let  $\mathcal{U}_n(\xi) $ denote the set of unitary transformations that fix the vector  $\xi \in \mathbb{S}^{n-1} $.
Given  $n\in \mathbb{N} $, $p,q \in \mathbb{N}_0,$ we define the  $(p,q) $-Legendre polynomial  $L_{n,p,q} \colon \mathbb{C}^n \to \mathbb{C}$ by
\begin{equation*}
L_{n,p,q}(z) :=  \sum_{j=0}^{p \wedge q} c_j(n,p,q)\, z_1^{p-j}\overline{z}_1^{q-j}\,\|(z_2, \ldots, z_{n})\|_2^{2j}\,,
\end{equation*}
where for
\begin{equation}\label{unique-harm-comp}
c_j(n,p,q) =   \frac{(-1)^j p! \,q! \,(n-2)!}{j!\,(p-j)!\,(q-j)! \,(j+n-2)!}\,, \quad  j = 0, \ldots, p \wedge q\,.
\end{equation}

The name of the polynomial may not appear in the literature, but it is based on the real case where one encounters the classical
Legendre polynomial (see \cite[Section~2.1.2]{atkinson2012spherical}).

A very important characterization of the $(p,q)$-Legendre polynomial is presented here. The proof of this fact relies heavily
on \cite[Theorem 12.2.6]{{rudin1980}}.

\smallskip
\begin{proposition} \label{propleg-comp}
The polynomial  $L_{n,p,q} \colon \mathbb{C}^n \to \mathbb{C} $ is the unique  polynomial
$g \in \mathcal{H}_{p,q}(\mathbb{C}^n) $ which fulfills the following two properties\,:
\begin{itemize}
\item[(a)]
$g\circ A = g $ for all  $A\in \mathcal{U}_n(e_1)$\,,
\item[(b)]
$g(e_1) = 1 $\,.
\end{itemize}
\end{proposition}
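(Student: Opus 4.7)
The plan is to split the proof into existence (verifying that $L_{n,p,q}$ itself lies in $\mathcal{H}_{p,q}(\mathbb{C}^n)$ and satisfies (a) and (b)) and uniqueness (no other element of $\mathcal{H}_{p,q}(\mathbb{C}^n)$ can satisfy both properties). The $(p,q)$-bihomogeneity, the invariance $L_{n,p,q}\circ A = L_{n,p,q}$ for $A \in \mathcal{U}_n(e_1)$ (which fixes $z_1$ and acts unitarily on $(z_2,\ldots,z_n)$, hence preserves $\|(z_2,\ldots,z_n)\|_2$), and the normalization $L_{n,p,q}(e_1) = c_0(n,p,q) = 1$ are immediate from the definition of $L_{n,p,q}$. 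The only substantive verification is harmonicity: writing $\tilde z = (z_2,\ldots,z_n)$ and splitting $\Delta = \partial_{z_1}\partial_{\overline{z}_1} + \sum_{k=2}^n \partial_{z_k}\partial_{\overline{z}_k}$, a direct term-by-term calculation gives
\[
\Delta\bigl(z_1^{p-j}\overline{z}_1^{q-j}\|\tilde z\|^{2j}\bigr) = (p-j)(q-j)\,z_1^{p-j-1}\overline{z}_1^{q-j-1}\|\tilde z\|^{2j} + j(j+n-2)\,z_1^{p-j}\overline{z}_1^{q-j}\|\tilde z\|^{2(j-1)}\,.
\]
Reindexing the second contribution by $j \mapsto j+1$ and collecting like monomials, $\Delta L_{n,p,q} = 0$ reduces to the recurrence
\[
c_{j+1}(n,p,q) = -\frac{(p-j)(q-j)}{(j+1)(j+n-1)}\, c_j(n,p,q)\,, \qquad 0 \leq j \leq p\wedge q - 1\,,
\]
which the explicit formula \eqref{unique-harm-comp} satisfies by a direct telescoping of factorials.

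For uniqueness, suppose $g \in \mathcal{H}_{p,q}(\mathbb{C}^n)$ satisfies (a) and (b). Writing $z = (z_1,\tilde z)$ with $\tilde z \in \mathbb{C}^{n-1}$, bihomogeneity permits the expansion $g(z_1,\tilde z) = \sum_{a \leq p,\, b \leq q} z_1^a \overline{z}_1^b\, h_{a,b}(\tilde z)$ with each $h_{a,b}$ a $(p-a,q-b)$-bihomogeneous polynomial in $\tilde z$. Since every $A \in \mathcal{U}_n(e_1)$ restricts to an arbitrary element $A' \in \mathcal{U}_{n-1}$ on the orthogonal complement of $e_1$, property (a) together with the uniqueness of the monomial expansion in $z_1,\overline{z}_1$ forces each $h_{a,b}$ to be $\mathcal{U}_{n-1}$-invariant. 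Invoking \cite[Theorem 12.2.6]{rudin1980} then yields that $h_{a,b}$ vanishes unless $p-a = q-b$, in which case $h_{a,b}(\tilde z) = \lambda_j \|\tilde z\|^{2j}$ with $j := p-a = q-b$. Consequently
\[
g(z) = \sum_{j=0}^{p\wedge q} \lambda_j\, z_1^{p-j}\overline{z}_1^{q-j}\|\tilde z\|^{2j}\,.
\]

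Finally, feeding this expansion into $\Delta g = 0$ triggers exactly the recurrence derived in the existence half, so all coefficients $\lambda_j$ are determined inductively by $\lambda_0$; condition (b) pins down $\lambda_0 = 1$, yielding $\lambda_j = c_j(n,p,q)$ for every $j$ and therefore $g = L_{n,p,q}$. The main obstacle is not so much any single step as the careful bookkeeping: correctly matching the two shifted indexings in the Laplacian computation so the scalar recurrence lines up with the closed-form expression \eqref{unique-harm-comp}, and verifying that Rudin's characterization of $\mathcal{U}$-invariant bihomogeneous polynomials applies uniformly to every pair $(p-a,q-b)$ on $\mathbb{C}^{n-1}$.
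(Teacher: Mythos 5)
Your proof is correct and follows essentially the same route as the paper: invariance under $\mathcal{U}_n(e_1)$ forces the ansatz $\sum_{j=0}^{p\wedge q}\lambda_j\, z_1^{p-j}\overline{z}_1^{q-j}\|z'\|_2^{2j}$, and harmonicity together with $g(e_1)=1$ then determines the coefficients through the identical recurrence $c_{j+1}=-\tfrac{(p-j)(q-j)}{(j+1)(j+n-1)}c_j$. The only cosmetic difference is that you invoke Rudin's classification of $\mathcal{U}_{n-1}$-invariant bihomogeneous polynomials as a black box for the reduction step, whereas the paper proves that fact directly (evenness of $z'\mapsto g(z_1,z')$ plus rotating $z'$ onto $\|z'\|_2 e_1$) --- the same result the paper itself credits to \cite[Theorem 12.2.6]{rudin1980}.
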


\begin{proof}
Take  $g \in \mathcal{H}_{p,q}(\mathbb{C}^n) $ which satisfies  $(a) $ and  $(b) $,  and consider its representation
\[
g(z) = \sum_{|\alpha|=p, |\beta|=q} c_{(\alpha,\beta)}
z^\alpha \bar{z}^\beta , \quad z \in \mathbb{C}^n\,.
\]
We write  $z = (z_1,z') \in \mathbb{C} \times \mathbb{C}^{n-1} $ for   $z \in \mathbb{C}^n $, and note that
every  $n\times n $-matrix  $U $ belongs to  $\mathcal{U}_n(e_1) $ if and only if it has the form
\begin{equation}\label{0a0}
  A=
\begin{pmatrix}
  1 & 0  \\
  0 & A' \\
 \end{pmatrix}
\end{equation}
with  $A' \in \mathcal{U}_{n-1} $.  Fix  $z_1 \in \mathbb{R} $,  and consider the polynomial
\[
h(z') = g(z_1, z'),  \quad z' \in \mathbb{C}^{n-1}.
\]
Then for all  $A' \in \mathcal{U}_{n-1} $ and  $z' \in \mathbb{C}^{n-1} $
\[
 h(A'z')= g(z_1,A'z' ) = g(A(z_1,z') ) = g(z_1,z' )=  h(z')\,,
\]
and in particular  $h(z') = h(-z') $. Consequently,
\[
h(t, 0, \ldots, 0) = \sum_{j=0}^{\lfloor \frac{p+q}{2}\rfloor} a_j(z_1) t^{2j},  \quad t \in \mathbb{R}\,.
\]
Choosing for  $z' \in \mathbb{C}^{n-1} $ some  $A' \in \mathcal{U}_{n-1} $ such that  $A' z' = \|z'\|_2 e_1 $,
we obtain
\begin{align*}
  \sum_{|\alpha|=p,|\beta|=q} &c_{(\alpha,\beta)}
    z_1^{\alpha_1}\bar{z}_1^{\beta_1} (z')^{(\alpha_2, \ldots, \alpha_{n})}
  (\overline{z}')^{(\beta_2, \ldots, \beta_{n})}  =g(z_1,z')
\\&
 = h(z') = h(\|z'\|_2 e_1) = \sum_{j=0}^{\lfloor \frac{p+q}{2}\rfloor} a_j(z_1) \|z'\|_2^{2j}\,.
\end{align*}
If we now compare coefficients,  then for all  $z \in \mathbb{C}^n $

\[
g(z) = \sum_{j=0}^{m } c_j z_1^{p-j}\overline{z}_1^{q-j} \|z'\|_2^{2j}\,,
\]

where  $m=p \wedge q $ and so it remains to verify that  the coefficients  $c_j=c_n(n,p,q) $ are those
from \eqref{unique-harm-comp}.  Indeed,  applying the Laplacian  $\triangle $ to both sides of the
preceding equality, we see that for all  $z \in \mathbb{C}^n $
\[
\triangle g(z) =
\sum_{j=0}^{ m  -1} b_j z_1^{p-1- j}\overline{z}_1^{q-1-j} \|z'\|_2^{2j}\,,
\]
where
\[
b_j = (p-j) (q-j) c_j  + (j+1) (j+n-1) c_{j+1}\,, \quad  j= 0, 1, \dots, m-1\,.
\]
But  $\triangle g = 0 $, so
\[
c_{j+1} =  \frac{-(p-j) (q-j)}{(j+1) (j+n-1)}\, c_j \,, \quad  j= 0, 1, \dots, m-1\,.
\]
Since $g(e_1)=1$ implies that $c_0=1,$ we have \eqref{unique-harm-comp}, as desired.
Conversely, it is clear that the polynomial
\[
g(z) = \sum_{j=0}^{p \wedge q } c_j z_1^{p-j}\overline{z}_1^{q-j} \|z'\|_2^{2j}
\]
is a $(p,q) $-bihomogeneous harmonic polynomial, as deduced from the definition of the coefficients  $(c_j)_j$
and the reasoning above. Furthermore, since  $c_0=1 $, we have  $g(e_1)=1 $. The invariance under the subgroup
$\mathcal{U}_n(e_1) $ is obvious.
\end{proof}

To derive key consequences from the previous proposition, we now introduce a useful identification. Recall that
$\mathbb{D} $ represents the complex open unit disk. Given  $z \in \mathbb{S}^{n-1}$, we have
\[
L_{n,p,q}^{\diamond} (\langle z, e_1 \rangle) =  L_{n,p,q}(z)\,,
\]
where the function  $L_{n,p,q}^{\diamond}: \mathbb{D} \to \mathbb{R} $, for  $w \in \mathbb{D} $, is defined as
\begin{equation} \label{repro}
  L_{n,p,q}^{\diamond}(w) :=
    \sum_{j=0}^{p\wedge q} c_j(n,p,q) \,w^{p-j} \overline{w}^{q-j}
    (1 - |w|^2)^{j}.
\end{equation}

  \smallskip

\begin{corollary} \label{lemma1-comp}
Let  $f \in \mathcal{H}_{p,q}(\mathbb{S}^{n-1}) $ and  $\xi \in \mathbb{S}^{n-1} $. Then the following 
conditions are equivalent:
\begin{itemize}
\item[{\rm(1)}]
$f \circ A = f $ for all  $A \in \mathcal{U}_n(\xi)$\,.
\item[{\rm(2)}]
$f = f(\xi)\, L^{\diamond}_{n,p,q}(\langle\pmb{\cdot},\xi \rangle )$\,.
\end{itemize}
\end{corollary}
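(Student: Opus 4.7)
The direction $(2)\Rightarrow(1)$ is a short calculation. If $A\in\mathcal{U}_n(\xi)$, then $A$ is unitary and fixes $\xi$, so $A^*\xi=\xi$, and hence $\langle Az,\xi\rangle=\langle z,A^*\xi\rangle=\langle z,\xi\rangle$ for every $z\in\mathbb{S}_n$. Consequently
\[
f(Az)=f(\xi)\,L^{\diamond}_{n,p,q}(\langle Az,\xi\rangle)=f(\xi)\,L^{\diamond}_{n,p,q}(\langle z,\xi\rangle)=f(z).
\]

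For the converse, the plan is to first handle the case $\xi=e_1$ and then reduce the general case to it by a unitary change of coordinates. Assume $f\in\mathcal{H}_{p,q}(\mathbb{S}_n)$ is invariant under $\mathcal{U}_n(e_1)$. Viewing $f$ as an element of $\mathcal{H}_{p,q}(\mathbb{C}^n)$ (via its unique bihomogeneous extension), the argument in the proof of Proposition~\ref{propleg-comp} applies verbatim: the $\mathcal{U}_n(e_1)$-invariance forces the representation
\[
f(z)=\sum_{j=0}^{p\wedge q} c_j\,z_1^{p-j}\overline{z}_1^{q-j}\|z'\|_2^{2j},
\]
and the harmonicity condition $\Delta f=0$ yields the recursion
\[
c_{j+1}=-\frac{(p-j)(q-j)}{(j+1)(j+n-1)}\,c_j,
\]
together with $c_0=f(e_1)$. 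Comparing with \eqref{unique-harm-comp}, this forces $c_j=f(e_1)\,c_j(n,p,q)$ for every $j$, so $f=f(e_1)\,L_{n,p,q}$ on $\mathbb{C}^n$. Restricting to the sphere and using the identity $L_{n,p,q}(z)=L^{\diamond}_{n,p,q}(\langle z,e_1\rangle)$ for $z\in\mathbb{S}_n$ concludes the case $\xi=e_1$.

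For an arbitrary $\xi\in\mathbb{S}_n$, pick $U\in\mathcal{U}_n$ with $Ue_1=\xi$, which exists by transitivity of $\mathcal{U}_n$ on $\mathbb{S}_n$, and set $\tilde f:=f\circ U$. By Lemma~\ref{lemma: invariance}, $\tilde f\in\mathcal{H}_{p,q}(\mathbb{S}_n)$. For any $B\in\mathcal{U}_n(e_1)$ one has $UBU^{-1}\in\mathcal{U}_n(\xi)$, so $f\circ(UBU^{-1})=f$, which rewrites as $\tilde f\circ B=\tilde f$. By the previous step, $\tilde f=\tilde f(e_1)\,L^{\diamond}_{n,p,q}(\langle\,\cdot\,,e_1\rangle)=f(\xi)\,L^{\diamond}_{n,p,q}(\langle\,\cdot\,,e_1\rangle)$ on $\mathbb{S}_n$. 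Substituting $w=Uz$ and using $\langle U^{-1}w,e_1\rangle=\langle w,Ue_1\rangle=\langle w,\xi\rangle$ gives
\[
f(w)=\tilde f(U^{-1}w)=f(\xi)\,L^{\diamond}_{n,p,q}(\langle w,\xi\rangle),\quad w\in\mathbb{S}_n,
\]
as desired. The main technical obstacle is the scalar rigidity in the case $\xi=e_1$: once the representation is established, everything is forced by the coefficient recursion, and in particular $f(e_1)=0$ already implies $f\equiv 0$, so no separate argument is needed for that degenerate case.
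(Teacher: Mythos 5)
Your proof is correct and follows essentially the same route as the paper: the direct computation for $(2)\Rightarrow(1)$, the reduction to $\xi=e_1$ via the rigidity established in Proposition~\ref{propleg-comp}, and the conjugation argument $UBU^{-1}\in\mathcal{U}_n(\xi)$ for general $\xi$. The only step you leave implicit (which the paper spells out) is that sphere-invariance of $f$ transfers to $\mathcal{U}_n(e_1)$-invariance of its bihomogeneous extension $g$ on all of $\mathbb{C}^n$ via $g(Az)=\|Az\|_2^{p+q}f(Az/\|Az\|_2)$; on the other hand, your explicit remark that the coefficient recursion handles the degenerate case $f(\xi)=0$ is actually slightly more careful than the paper's phrasing.
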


\begin{proof}
Using the properties of  $L_{n,p,q} $ isolated in Proposition~\ref{propleg-comp}, the implication  $(2)\Rightarrow(1) $ is obvious.
In order to  check that   $(1)\Rightarrow(2) $, we first assume that  $\xi = e_1 $, and  choose
$g \in \mathcal{H}_{p,q}(\mathbb{C}^n) $ such that  $g|_{\mathbb{S}^{n-1}} = f $. By assumption for all  $A \in \mathcal{U}_n(e_1) $ and  $z \in \mathbb{C}^n $
\[
g(Az) = \|Az\|^{p+q}_2 f \Big(\frac{Az}{\|Az\|_2}\Big)
=
\|z\|^{p+q}_2 (f\circ A) \Big(\frac{z}{\|z\|_2}\Big)
=
\|z\|^{p+q}_2 f \Big(\frac{z}{\|z\|_2}\Big)
= g(z)\,.
\]
Hence, by the uniqueness properties of   $L_{n,p,q} $ from Proposition~\ref{propleg-comp} there is some constant  $c >0 $ such that
$g = c L_{n,p,q} $ on  $\mathbb{C}^n $, and inserting here  $e_1 $ we see that  $c = g(e_1)=f(e_1) $. Then, for all  $\eta \in \mathbb{S}^{n-1} $,
we get
\begin{equation*}
f(\eta) = g( \eta) = c L_{n,p,q}(\eta) = f(e_1) L^{\diamond}_{n,p,q}(\langle\eta, e_1\rangle)\,.
\end{equation*}
Finally, we take an arbitrary  $\xi \in \mathbb{S}^{n-1} $,  and choose   $B \in \mathcal{U}_n $ such that
$Be_1 = \xi $.  Since  $B \circ A \circ B^{-1} \in \mathcal{U}_n(\xi) $ for all
$A \in \mathcal{U}_n(e_1) $, by  $(1) $ we have  $(f \circ B) \circ A \circ B^{-1} = f $ for all
$A \in \mathcal{U}_n(e_1) $, and hence  $(f \circ B) \circ A = f \circ B $ for all such  $A $. Consequently,
$f \circ B = f (B(e_1)) L^{\diamond}_{n,p,q}(\langle\pmb{\cdot},e_1\rangle) $,  or equivalently
$f = f(\xi) L^{\diamond}_{n,p,q}(\langle\pmb{\cdot},\xi \rangle) $.
\end{proof}

\smallskip

\smallskip
In the literature, $\mathbf{k}_{\mathcal{H}_{p,q}(\mathbb{S}^{n-1})}(\xi, \cdot)$ is often referred to as a zonal spherical harmonic of
bidegree $(p,q)$ with pole at $\xi$, and is sometimes denoted by $Y^{(p,q)}_\xi$. To maintain consistency, we will retain our current
notation. As a consequence of the previous result, we obtain deeper insight into the structure of this reproducing kernel.

\begin{lemma} \label{lemma: repro}
For $n \geq 2 $ and  $p,q \geq 0 $ we have:
\begin{align*}
&
\mathbf{k}_{\mathcal{H}_{p,q}(\mathbb{S}^{n-1})}(e_1, \eta) = N_{n,p,q}\,\,L^{\diamond}_{n,p,q}(\eta_1)\,.
\end{align*}
\end{lemma}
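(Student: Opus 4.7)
The plan is to combine the abstract properties of the reproducing kernel from Theorem \ref{propertiesreproducing} with the uniqueness characterization obtained in Corollary \ref{lemma1-comp}. The key observation is that the function $f := \mathbf{k}_{\mathcal{H}_{p,q}(\mathbb{S}_n)}(e_1, \cdot)$ lies in $\mathcal{H}_{p,q}(\mathbb{S}_n)$ and is invariant under $\mathcal{U}_n(e_1)$, so it must be a scalar multiple of the zonal function $L^{\diamond}_{n,p,q}(\langle \cdot, e_1\rangle)$, and the correct scalar turns out to be $N_{n,p,q}$.

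First I would check membership and invariance. Property (ii) of Theorem \ref{propertiesreproducing} gives directly that $f \in \mathcal{H}_{p,q}(\mathbb{S}_n)$. For the invariance, I would apply property (iv) with the Rudin triple $\big((\mathbb{S}_n,\sigma_n),(\mathcal{U}_n,\mathrm{m}),\varphi\big)$ described in Section~3: for any $U \in \mathcal{U}_n(e_1)$ and $\eta \in \mathbb{S}_n$,
\[
f(U\eta) = \mathbf{k}_{\mathcal{H}_{p,q}(\mathbb{S}_n)}(e_1, U\eta) = \mathbf{k}_{\mathcal{H}_{p,q}(\mathbb{S}_n)}(U^{-1}e_1, \eta) = \mathbf{k}_{\mathcal{H}_{p,q}(\mathbb{S}_n)}(e_1, \eta) = f(\eta),
\]
using that $U^{-1}e_1 = e_1$. (A tiny sanity check here is that the action of $U$ on both arguments of $\mathbf{k}_S$ coincides with how we apply property (iv); since $\varphi_U(z) = Uz$ on the sphere, this is immediate.)

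Next I would apply Corollary \ref{lemma1-comp} with $\xi = e_1$. The corollary yields
\[
f = f(e_1)\, L^{\diamond}_{n,p,q}(\langle \cdot, e_1\rangle).
\]
It remains to identify $f(e_1)$. By property (v) of Theorem \ref{propertiesreproducing}, $f(e_1) = \mathbf{k}_{\mathcal{H}_{p,q}(\mathbb{S}_n)}(e_1, e_1) = \dim \mathcal{H}_{p,q}(\mathbb{S}_n) = N_{n,p,q}$, using the dimension formula \eqref{dimfor-comp}. Finally, since $\langle \eta, e_1\rangle = \eta_1$ for $\eta \in \mathbb{S}_n$, I obtain $\mathbf{k}_{\mathcal{H}_{p,q}(\mathbb{S}_n)}(e_1, \eta) = N_{n,p,q} L^{\diamond}_{n,p,q}(\eta_1)$.

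I do not anticipate a major obstacle: the ingredients (the Rudin-triple formalism, the reproducing-kernel properties, and the uniqueness of the $(p,q)$-Legendre polynomial among $\mathcal{U}_n(e_1)$-invariant bihomogeneous harmonic polynomials) have all been established. The only minor subtlety worth stating cleanly is the direction of the invariance (property (iv) is written with $\varphi_g$ applied to both entries, so one applies it with $g = U^{-1}$ to shift the symmetry onto the second variable alone); beyond that, the argument reduces to a three-line combination of the preceding results.
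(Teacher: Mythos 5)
Your argument is correct and is essentially the paper's own proof: both use property (iv) of Theorem~\ref{propertiesreproducing} to get $\mathcal{U}_n(e_1)$-invariance of $\mathbf{k}_{\mathcal{H}_{p,q}(\mathbb{S}_n)}(e_1,\cdot)$, then Corollary~\ref{lemma1-comp} to identify it as a multiple of $L^{\diamond}_{n,p,q}(\langle\cdot,e_1\rangle)$, and property (v) to pin the constant down to $\dim\mathcal{H}_{p,q}(\mathbb{S}_n)=N_{n,p,q}$. Your explicit check of membership via property (ii) and the careful handling of the direction of the group action are fine and only make the paper's argument more explicit.
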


 \begin{proof}
 We consider  $S = \mathcal{H}_{p,q}(\mathbb{S}^{n-1}) $. Then by Lemma~\ref{theorem kernel0} (iv)  for all $A \in \mathcal{U}_n(\xi)$
 and  $y \in \mathbb{S}^{n-1} $,  we have
   \begin{equation*}
     \mathbf{k}_S(\xi, y) = \mathbf{k}_S(A\xi, Ay) = \mathbf{k}_S(\xi, Ay)\,,
   \end{equation*}
   so that by Corollary~\ref{lemma1-comp}
   \begin{equation*}
     \mathbf{k}_S(\xi, \cdot) = \mathbf{k}_S(\xi, \xi) L^{\diamond}_{n,p,q}(\langle\pmb{\cdot},\xi\rangle)\,.
   \end{equation*}
   Then Lemma~\ref{theorem kernel0}, (i) and (v) complete the argument.
 \end{proof}

We now deal with the accessibility of the spaces of interest.

\begin{lemma}    \label{accessibility-comp}
Let $n \geq 2$ and $p, q \geq 0$. Then the space $\mathcal{H}_{p,q}(\mathbb{S}^{n-1})$ is strongly accessible. In particular,
$\mathfrak{P}_{p,q}(\mathbb{S}^{n-1})$ is also accessible, and we have the following representation for its reproducing kernel\,:
\[
\mathbf{k}_{\mathfrak{P}_{p,q}(\mathbb{S}^{n-1})}(e_1,\eta) = \sum_{j=0}^{p \wedge q} N_{n,p-j,q-j} L^{\diamond}_{n,p-j, q-j}(\eta_1)\,.
\]
\end{lemma}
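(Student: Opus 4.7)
The plan is to establish the two claims in sequence, obtaining the kernel formula as a byproduct of the second part. For the strong accessibility of $\mathcal{H}_{p,q}(\mathbb{S}_n)$, I would take $x_0 = e_1$ in Definition \ref{strongaccesibility}. Since $\varphi_U(z) = Uz$, the stabilizer $\{U \in \mathcal{U}_n : \varphi_U e_1 = e_1\}$ coincides with $\mathcal{U}_n(e_1)$, so the invariance hypothesis on a polynomial $f \in \mathcal{H}_{p,q}(\mathbb{S}_n)$ becomes $f \circ A = f$ for all $A \in \mathcal{U}_n(e_1)$. Corollary \ref{lemma1-comp} then forces
$$f = f(e_1)\, L^{\diamond}_{n,p,q}(\langle \pmb{\cdot}, e_1\rangle),$$
and by Lemma \ref{lemma: repro} this is precisely the scalar multiple $\lambda\, \mathbf{k}_{\mathcal{H}_{p,q}(\mathbb{S}_n)}(e_1, \cdot)$ with $\lambda = f(e_1)/N_{n,p,q}$. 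This is exactly the condition demanded by strong accessibility.

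For the second claim, I would combine the Fischer-type orthogonal decomposition from Theorem \ref{ludo13A-OKOK}, which identifies
$$\mathfrak{P}_{p,q}(\mathbb{S}_n) = \bigoplus_{j=0}^{p \wedge q}\mathcal{H}_{p-j,q-j}(\mathbb{S}_n)$$
as an $L_2(\mathbb{S}_n)$-orthogonal sum, with Proposition \ref{one}. Since strong accessibility entails accessibility (by design of Definition \ref{strongaccesibility}), each summand $\mathcal{H}_{p-j,q-j}(\mathbb{S}_n)$ is accessible by the first part. Proposition \ref{one} then simultaneously produces the accessibility of $\mathfrak{P}_{p,q}(\mathbb{S}_n)$ and the additivity of reproducing kernels along the orthogonal sum, giving
$$\mathbf{k}_{\mathfrak{P}_{p,q}(\mathbb{S}_n)}(e_1, \eta) = \sum_{j=0}^{p \wedge q} \mathbf{k}_{\mathcal{H}_{p-j,q-j}(\mathbb{S}_n)}(e_1, \eta),$$
into which I would plug the explicit formula from Lemma \ref{lemma: repro} term by term to obtain the stated expression.

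I do not anticipate a serious obstacle, since the technical heart has already been carried out in Proposition \ref{propleg-comp}, Corollary \ref{lemma1-comp}, and Lemma \ref{lemma: repro}. The only minor bookkeeping concerns the degenerate corner cases where $p \wedge q = 0$ (the decomposition then collapses to the single term $\mathfrak{P}_{p,q}(\mathbb{S}_n) = \mathcal{H}_{p,q}(\mathbb{S}_n)$ and the claimed formula reduces accordingly) and the implicit appeal to the general fact from \cite{defant2024minimal} that strong accessibility in the sense of Definition \ref{strongaccesibility} implies accessibility in the original sense.
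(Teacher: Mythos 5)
Your proposal is correct and follows essentially the same route as the paper: strong accessibility of $\mathcal{H}_{p,q}(\mathbb{S}_n)$ at $x_0=e_1$ via Corollary~\ref{lemma1-comp} together with Lemma~\ref{lemma: repro}, then accessibility and kernel additivity for $\mathfrak{P}_{p,q}(\mathbb{S}_n)$ via Theorem~\ref{ludo13A-OKOK} and Proposition~\ref{one}. You in fact spell out the stabilizer computation that the paper leaves as ``a careful analysis of Definition~\ref{strongaccesibility} in conjunction with Corollary~\ref{lemma1-comp}'', so nothing is missing.
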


\begin{proof}
The strong accessibility of the space $\mathcal{H}_{p,q}(\mathbb{S}^{n-1})$ can be established through a~careful analysis of Definition
\ref{strongaccesibility} in conjunction with Corollary~\ref{lemma1-comp}. Recalling from Theorem~\ref{ludo13A-OKOK} the orthogonal
decomposition of $\mathfrak{P}_{p,q}(\mathbb{S}^{n-1})$, we conclude from Proposition~\ref{one} the accessibility of
$\mathfrak{P}_{p,q}(\mathbb{S}^{n-1})$. We moreover obtain  the representation of its reproducing kernel:
\[
\mathbf{k}_{\mathfrak{P}_{p,q}(\mathbb{S}^{n-1})}(e_1,\eta) = \sum_{j=0}^{p \wedge q} \mathbf{k}_{\mathcal{H}_{p-j, q-j}(\mathbb{S}^{n-1})}(e_1,\eta)\,.
\]
The conclusion now follows from Lemma~\ref{lemma: repro}.
\end{proof}

Combining Theorem~\ref{abstractS} with Lemmas~\ref{lemma: repro} and~\ref{accessibility-comp}, we arrive at the following theorem, which
will later serve as the foundation to obtain concrete results.

\smallskip

\begin{theorem}  \label{top}
For $n\geq 2$ and $p,q \geq 0 $ we have the integral formulas:
\begin{align*}
\boldsymbol{\lambda}\big(\mathcal{H}_{p,q}(\mathbb{S}^{n-1})\big) = N_{n,p,q}\,\int_{\mathbb{S}^{n-1}}\,
\Big|  \,\,L^{\diamond}_{n,p,q}(\eta_1)\Big| \,d\sigma_n(\eta)
\end{align*}
and
\begin{align*}
&
  \boldsymbol{\lambda}\big(\mathfrak{P}_{p,q}(\mathbb{S}^{n-1})\big)
 =
\int_{\mathbb{S}^{n-1}}\,
 \Big| \sum_{j=0}^{p \wedge q} \,N_{n,p-j,q-j}\,L^{\diamond}_{n,p-j, q-j}(\eta_1)\Big| \,d\sigma_n(\eta)\,.
\end{align*}
\end{theorem}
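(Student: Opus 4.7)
The plan is to view Theorem \ref{top} as a direct bookkeeping step that combines the abstract machinery of Section 3 with the concrete descriptions of the reproducing kernels obtained earlier in Section 4. Concretely, Theorem \ref{abstractS} reduces the computation of the absolute projection constant of any $\mathcal{U}_n$-invariant, finite-dimensional and accessible subspace $S$ of $C(\mathbb{S}_n)$ to the $L_1(\sigma_n)$-integral of $|\mathbf{k}_S(e_1,\cdot)|$. It therefore suffices, for each of the two spaces at hand, to check the hypotheses of Theorem \ref{abstractS} and substitute the explicit kernel.

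First I would treat $S = \mathcal{H}_{p,q}(\mathbb{S}_n)$. Its $\mathcal{U}_n$-invariance is furnished by Lemma \ref{lemma: invariance}, its finite-dimensionality is recorded in \eqref{dimfor-comp}, and its (strong, hence ordinary) accessibility is the first assertion of Lemma \ref{accessibility-comp}. Thus Theorem \ref{abstractS} gives
\[
\boldsymbol{\lambda}\bigl(\mathcal{H}_{p,q}(\mathbb{S}_n)\bigr) = \int_{\mathbb{S}_n} \bigl|\mathbf{k}_{\mathcal{H}_{p,q}(\mathbb{S}_n)}(e_1,\eta)\bigr|\,d\sigma_n(\eta).
\]
Plugging in the identity $\mathbf{k}_{\mathcal{H}_{p,q}(\mathbb{S}_n)}(e_1,\eta) = N_{n,p,q}\,L^{\diamond}_{n,p,q}(\eta_1)$ from Lemma \ref{lemma: repro} and pulling the positive constant $N_{n,p,q}$ outside the integral yields the first displayed formula.

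Next I would repeat this argument for $S = \mathfrak{P}_{p,q}(\mathbb{S}_n)$. Again the $\mathcal{U}_n$-invariance is given by Lemma \ref{lemma: invariance}, while the orthogonal decomposition of Theorem \ref{ludo13A-OKOK} together with Proposition \ref{one} expresses $\mathfrak{P}_{p,q}(\mathbb{S}_n)$ as a finite $L_2(\sigma_n)$-orthogonal sum of the accessible subspaces $\mathcal{H}_{p-j,q-j}(\mathbb{S}_n)$, $0 \leq j \leq p\wedge q$; the accessibility of the sum is exactly what the second part of Lemma \ref{accessibility-comp} records. Invoking Theorem \ref{abstractS} one more time and substituting the kernel identity
\[
\mathbf{k}_{\mathfrak{P}_{p,q}(\mathbb{S}_n)}(e_1,\eta) = \sum_{j=0}^{p \wedge q} N_{n,p-j,q-j}\, L^{\diamond}_{n,p-j,q-j}(\eta_1)
\]
from Lemma \ref{accessibility-comp} produces the second formula.

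There is no genuine obstacle: the theorem is essentially the assembly of pieces already constructed. The only point that deserves a word of care is that in the second formula the absolute value cannot be distributed across the summands, because the real-valued functions $L^{\diamond}_{n,p-j,q-j}(\eta_1)$ may change sign and interact with one another, which is why the $\sum_{j}$ sits strictly inside $|\,\cdot\,|$. All the same, this is the formula that, in the next sections, will be converted into a weighted $L_1$-norm of a suitable Jacobi polynomial.
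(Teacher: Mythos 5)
Your proposal is correct and follows exactly the paper's route: the paper states Theorem~\ref{top} as the immediate combination of Theorem~\ref{abstractS} (invariance plus accessibility yields the $L_1$-integral of the kernel) with the kernel identities of Lemma~\ref{lemma: repro} and Lemma~\ref{accessibility-comp}. Your added remark that the absolute value must stay outside the sum in the bihomogeneous case is a sensible point of care, but nothing beyond the paper's own assembly of these ingredients.
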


We now aim to provide a precise and meticulous description of the reproducing kernels, highlighting the connection between these and certain Jacobi polynomials.
For this we recall the definition of this class. For parameters  $\alpha, \beta > -1 $, we denote the sequence of degree-$d $ Jacobi polynomials defined on the interval  $[-1,1] $ as  $\big(P_d^{\alpha, \beta}\big)_{d \in \mathbb{N}_0} $. These polynomials are characterized by their  orthogonality with respect to the weighted inner product given by
\[
\langle P, Q \rangle = \int_{-1}^{1} P(t) Q(t) \, (1-t)^\alpha (1+t)^\beta \, dt\,.
\]
Additionally, they are normalized such that
\[
P_d^{\alpha, \beta}(1) = \binom{d+\alpha}{d}\,.
\]
There are several explicit formulas  for these polynomials; one notable expression is provided by the Rodrigues formula:
\begin{equation}\label{rodjac}
  P_d^{\alpha, \beta}(t) = \frac{(-1)^d}{2^d d!}
  (1-t)^{-\alpha} (1+t)^{-\beta}
  \left(\frac{\partial}{\partial t}\right)^d \left[(1-t)^{\alpha+d} (1+t)^{\beta+d}\right]\,.
\end{equation}

The following result is familiar and can be found for example in \cite[Theorem 3.3.]{Koornwinder1972}. We give an independently
interesting proof which is based on the representation of  $L^{\diamond}_{n,p,q} $ from Equation~\eqref{repro}. We need to recall
the well-known formula for integration on spheres, which will be used frequently (see, e.g., \cite[1.4.5. (2)]{rudin1980}):
for every  $f \in C(\overline{\mathbb{D}}) $, we have
\begin{equation}\label{integration}
\int_{\mathbb{S}^{n-1}} f(\langle \eta , e_1 \rangle) \, d\sigma_n(\eta) =
\frac{n-1}{\pi} \int_{0}^{1} \int_{-\pi}^{\pi} (1-r^2)^{n-2} f(r e^{i\theta}) \, r \, d\theta \,dr \,.
\end{equation}

\begin{theorem} \label{Koornwinder}
  Let  $p,q \geq 0 $ and  $n \geq 2 $. Then for all  $z = r e^{i \theta} $ with  $r\geq0 $,
  \[
  L^{\diamond}_{n,p,q}(z) = r^{|p-q|}  e^{i(p-q)\theta}
  \,\,
  \frac{(p\wedge q)!(n-2)!}{((p\wedge q) + (n-2))!}
  \,\,
  P_{p\wedge q}^{n-2,|p-q|}(2r^2 -1)\,.
  \]
\end{theorem}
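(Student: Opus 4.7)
The plan is to start from the explicit formula \eqref{repro} defining $L^{\diamond}_{n,p,q}$, perform the polar substitution $z = re^{i\theta}$, and then identify the resulting polynomial in $r^2$ with a Jacobi polynomial by matching coefficients via a standard explicit representation. Concretely, first I would plug $z = re^{i\theta}$ into
\[
L^{\diamond}_{n,p,q}(z) = \sum_{j=0}^{p\wedge q} c_j(n,p,q)\, z^{p-j}\bar z^{q-j}(1-|z|^2)^{j},
\]
observe that $z^{p-j}\bar z^{q-j} = r^{p+q-2j}e^{i(p-q)\theta}$, and factor the common phase $e^{i(p-q)\theta}$ together with the smallest $r$-power. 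By the symmetry $L^{\diamond}_{n,p,q}(z) = \overline{L^{\diamond}_{n,q,p}(z)}$ (immediate from $c_j(n,p,q) = c_j(n,q,p) \in \mathbb{R}$), it suffices to treat the case $p\le q$, so that $p\wedge q = p$ and $|p-q| = q-p$. After pulling out $r^{q-p}e^{i(p-q)\theta}$, I am left with the identity
\[
\sum_{j=0}^{p} c_j(n,p,q)\, r^{2(p-j)}(1-r^{2})^{j}
\,=\, \frac{p!\,(n-2)!}{(p+n-2)!}\,P_{p}^{n-2,\,q-p}(2r^{2}-1),
\]
which is where the bulk of the work lies.

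To prove this identity, I would use the standard hypergeometric form of the Jacobi polynomial,
\[
P_{d}^{\alpha,\beta}(t) = \sum_{j=0}^{d}\binom{d+\alpha}{d-j}\binom{d+\beta}{j}\left(\tfrac{t-1}{2}\right)^{j}\left(\tfrac{t+1}{2}\right)^{d-j},
\]
which can itself be derived quickly from the Rodrigues formula \eqref{rodjac} by applying Leibniz's rule to the $d$-th derivative of $(1-t)^{\alpha+d}(1+t)^{\beta+d}$ and dividing out the weight. Specializing to $d=p$, $\alpha = n-2$, $\beta = q-p$, and substituting $t = 2r^{2}-1$ (so that $\frac{t+1}{2} = r^{2}$ and $\frac{t-1}{2} = -(1-r^{2})$), the formula becomes
\[
P_{p}^{n-2,\,q-p}(2r^{2}-1) = \sum_{j=0}^{p}(-1)^{j}\binom{p+n-2}{p-j}\binom{q}{j}r^{2(p-j)}(1-r^{2})^{j}.
\]
A direct bookkeeping check then shows that
\[
(-1)^{j}\binom{p+n-2}{p-j}\binom{q}{j} \,=\, \frac{(p+n-2)!}{p!\,(n-2)!}\; c_{j}(n,p,q),
\]
which is precisely the constant needed to convert between the two expressions. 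This step is purely algebraic manipulation with factorials.

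Once the identity is established for $p \le q$, the case $p > q$ follows by conjugation: applying the proved case to $L^{\diamond}_{n,q,p}(re^{i\theta})$ and using that the Jacobi polynomial $P_{p\wedge q}^{n-2,|p-q|}(2r^{2}-1)$ is real-valued, complex conjugation only flips the phase $e^{i(q-p)\theta}$ to $e^{i(p-q)\theta}$, giving the claimed formula with $p\wedge q = q$ and $|p-q| = p-q$. I do not anticipate a conceptual difficulty here; the only real obstacle is the factorial bookkeeping in the coefficient comparison, which must be executed carefully to see the factor $\frac{p!(n-2)!}{(p+n-2)!} = \frac{(p\wedge q)!(n-2)!}{((p\wedge q)+(n-2))!}$ emerge cleanly.
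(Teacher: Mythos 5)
Your proposal is correct, but it takes a genuinely different route from the paper. The paper's first step coincides with yours (factor out the phase $e^{i(p-q)\theta}$ and the power $r^{|p-q|}$ to obtain $L^{\diamond}_{n,p,q}(re^{i\theta}) = r^{|p-q|}e^{i(p-q)\theta}Q_{p,q}(2r^2-1)$ with $\deg Q_{p,q}\le p\wedge q$), but its second step is conceptual rather than computational: fixing $k=p-q$, the authors use the integration formula \eqref{integration} together with the pairwise $L_2(\mathbb{S}_n)$-orthogonality of the spaces $\mathcal{H}_{p,q}(\mathbb{S}_n)$ to show that the polynomials $Q_{p,q}$ are orthogonal on $[-1,1]$ with respect to the weight $(1-s)^{n-2}(1+s)^{|k|}$, and then invoke the uniqueness of orthogonal polynomials for that weight plus the normalization $Q_{p,q}(1)=L^{\diamond}_{n,p,q}(1)=1$ and $P_d^{\alpha,\beta}(1)=\binom{d+\alpha}{d}$ to identify $Q_{p,q}$ with the normalized Jacobi polynomial. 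You instead expand $P_{p}^{n-2,q-p}(2r^2-1)$ via the standard binomial representation $P_d^{\alpha,\beta}(t)=\sum_{j}\binom{d+\alpha}{d-j}\binom{d+\beta}{j}(\frac{t-1}{2})^j(\frac{t+1}{2})^{d-j}$ and match coefficients against \eqref{unique-harm-comp}; I checked the key identity $(-1)^j\binom{p+n-2}{p-j}\binom{q}{j}=\frac{(p+n-2)!}{p!(n-2)!}c_j(n,p,q)$ and it holds, and your reduction of the case $p>q$ via $L^{\diamond}_{n,p,q}=\overline{L^{\diamond}_{n,q,p}}$ is sound. Your argument is shorter, purely algebraic, and yields the identity for all $r\ge 0$ at once; the paper's argument avoids any explicit Jacobi formula and instead exposes the structural reason the Jacobi polynomials appear (namely, that the zonal functions inherit orthogonality with respect to the weight $(1-s)^{n-2}(1+s)^{|p-q|}$ from the orthogonality of the harmonic subspaces), which is the insight the authors explicitly wish to highlight.
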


\begin{proof}
Fixing  $n \geq 2$, we   in the following  divide the proof  into two steps.  In the first step we prove that for all  $z = re^{i\theta} \in \overline{\mathbb{D}} $  and  $p,q \ge 0$
  \[
  L^{\diamond}_{n,p,q}(z) = r^{|p-q|}  e^{i(p-q)\theta}
  \,\,
  Q_{p,q}(2r^2 -1) \,,
  \]
 where   $Q_{p,q}: \mathbb{R} \to \mathbb{R} $  is a polynomial  of degree
  less than or equal to $ p \wedge q  $.
  To see this, assume  without loss of generality  that  $p \wedge q = p $.
Then by Equation~\eqref{repro}
  \begin{equation*}
  L_{n,p,q}^{\diamond} (z) =  r^{|p-q|}  e^{i (p-q)\theta}
    \sum_{j=0}^{p}c_j \,r^{2(p-j)}
    (1-r^2)^j  \,,
 \end{equation*}
 and for each  $0 \leq j \leq p $
 \begin{align*}
   &
   r^{2(p-j)} = 2^{-(p-j)}((2r^2 -1)+1)^{p-j} = u_j(2r^2 -1)
   \\&
   (1-r^2)^j
    = (-1)^j 2^{-j}(2(r^2-1))^j = (-1)^j 2^{-j}((2r^2-1)-1))^j =v_j(2r^2 -1)\,,
     \end{align*}
 where  $u_j(x) = 2^{-(p-j)}(x+1)^{p-j} $ is a polynomial of degree  $ p-j $
 and  $v_j(x) = (-1)^j 2^{-j}(x-1)^{j} $ a~polynomial of degree  $ j $.  This clearly proves the claim.

  In the second step we  check that   for all  $r \in [0,1] $ and $p,q \ge 0 $
  \[
  Q_{p,q}(2r^2 -1) = \frac{1}{P_{p\wedge q}^{n-2,|p-q|}(1)}\,\, P_{p\wedge q}^{n-2,|p-q|}(2r^2 -1)\,.
  \]
  To do so, we fix some  arbitrary $k \in \mathbb{Z} $,  and
  show that the preceding  equality holds
  for all  $p,q \ge 0 $ with $k = p-q$.  The idea is to
  prove that in this case all  polynomials $Q_{p,q}$ are
  orthogonal with respect to the weight function
 $ (1-\pmb{\cdot})^{n-2}(1+ \pmb{\cdot})^{|k|}$  on  $[-1,1]$ , which then implies the conclusion using the uniqueness of the Jacobi polynomials.

 More precisely,  assume that for each  $p,q,p',q' \ge 0$
 such that $k = p-q=p'-q'$ and
 $\ell = p\wedge q \neq \ell' = p'\wedge q' $,
 we have,
  \begin{align} \label{koorwin}
    \int_{-1}^{1}  (1-s)^{n-2}(1+s)^{|k|} Q_{p,q}(2s^2 -1)  Q_{p',q'}(2s^2 -1) ds = 0\,.
  \end{align}
Then, by the uniqueness properties of Jacobi polynomials, there exists a constant $c \in \mathbb{R}$ such that
\[
Q_{p,q}(2s^2 -1) = c \, P_{p \wedge q}^{n-2, |k|}(s), \quad s \in [-1,1].
\]
Moreover, since $Q_{p,q}(1) = L_{n,p,q}^{\diamond}(1) = 1$, it follows that
\[
c = \frac{1}{P_{p \wedge q}^{n-2, |p-q|}(1)}.
\]
  This would complete the proof of our claim.

It remains to prove Equation~\eqref{koorwin}. We use Equation~\eqref{integration} to see that
  \begin{align*}
    &
  \int_{\mathbb{S}^{n-1}}
    L_{n,p,q}^{\diamond} (\langle e_1, \eta \rangle)  \overline{
L_{n,p',q'}^{\diamond}(\langle e_1, \eta \rangle)}
    \,d\sigma_n(\eta)
    \\&
  =
  \frac{n-1}{\pi}\int_{0}^{1} \int_{-\pi}^{\pi}
    r^{|p-q|+|p'-q'|}  e^{i (p-q)\theta} e^{-i (p'-q')\theta} (1-r^2)^{n-2} r
    Q_{p,q} (2r^2 -1)  Q_{p',q'} (2r^2 -1)
  d\theta dr
  \\&
  =
  (n-1)\int_{0}^{1}
    (r^2)^{|k|}  (1-r^2)^{n-2} r
    Q_{p,q} (2r^2 -1)  Q_{p',q'} (2r^2 -1)
  dr
  \\&
  =
  2^{-|k|-(n-2)} (n-1)  \int_{-1}^{1} (1+s)^{|k|} (1-s)^{n-2} Q_{p,q}(s) Q_{p',q'} (s) ds\,.
    \end{align*}
where we have used the change of variables  $s=2r^2-1 $ to get the last equality.
    But
      \begin{align*}
      \int_{\mathbb{S}^{n-1}}
    L_{n,p,q}^{\diamond} (\langle e_1, \eta \rangle)  \overline{
L_{n,p',q'}^{\diamond} (\langle e_1, \eta \rangle)}
    \,d\sigma_n(\eta) = 0\,,
    \end{align*}
  since by Proposition~\ref{propleg-comp} we have that
   $L_{n,u,v}^{\diamond} (\langle e_1, \cdot \rangle) \in \mathcal{H}_{u,v}(\mathbb{S}^{n-1})  $ for all  $u,v \geq 0 $, and by Lemma~\ref{lemma: invariance}  that all spaces
   $\mathcal{H}_{u,v}(\mathbb{S}^{n-1})  $ are pairwise orthogonal. This  completes the argument.    \end{proof}

\section{Integral Formulas}

We now shift our focus to some explicit integral formulas for the projection constant. The following observation will serve as our guide.

\begin{remark} \label{RW}
The   Ryll-Wojtaszczyk formula from \eqref{Ryll-Wojtaszczyk} states that for all  $p \in \mathbb{N}, $
\begin{equation*}
  \boldsymbol{\lambda}\big(\mathcal{P}_{p}(\mathbb{S}^{n-1})\big)
 \,\,=\,\,  \frac{\Gamma(n + p )}{\Gamma(n-1)\Gamma( 1 + p )}\,
  \int_0^1 (1 - t)^{n-2} t^{\frac{p}{2}}\,dt  \,\,=\,\,
\frac{\Gamma(n+p) \Gamma(1 + \frac{p}{2})}{\Gamma(1 + p) \Gamma(n + \frac{p}{2})} \,.
\end{equation*}
The previous result asserts that for $q=0$,
\begin{equation*} \label{RN}
 \boldsymbol{\lambda}\big(\mathfrak{P}_{p,q}(\mathbb{S}^{n-1})\big) =\boldsymbol{\lambda}\big(\mathcal{H}_{p,q}(\mathbb{S}^{n-1})\big)
= \frac{\Gamma(n + p \vee q )}{\Gamma(n-1)\Gamma( 1 + p \vee q )}\,\,  \int_0^1 (1 - t)^{n-2} t^{\frac{p}{2}}\,dt\,.
\end{equation*}
  \end{remark}

We aim to generalize this equation to  $\mathcal{H}_{p,q}(\mathbb{S}^{n-1}) $ and  $\mathfrak{P}_{p,q}(\mathbb{S}^{n-1}) $ for  $p, q \ge 0 $.

Now that we have the integral representations and a clear understanding of how to express the kernels, we present explicit expressions for the projection constants of $\mathfrak{P}_{p,q}(\mathbb{S}^{n-1})$ and $\mathcal{H}_{p,q}(\mathbb{S}^{n-1})$ in terms of weighted  $ L_1 $-norms, where the weights are determined by specific Jacobi polynomials.

Let us first address the spherical harmonics.

\begin{theorem} \label{jacobi}
   For    $n \geq 2 $
 and  $p,q \geq 0 $
\begin{equation*}
  \boldsymbol{\lambda}\big(\mathcal{H}_{p,q}(\mathbb{S}^{n-1})\big)
  = c(p,q,n)\,\, \int_{0}^{1} (1-t)^{n-2} t^{\frac{|p-q|}{2}} \,\big| P_{p\wedge q}^{n-2,|p-q|}(2t -1)\big| dt\,,
\end{equation*}
where
\begin{align*}
  c(p,q,n)
    =
   (n+p+q-1)\,
  \frac{\Gamma(n+ p \vee q -1)}{ \Gamma(n-1) \Gamma(1+ p \vee q)}\,,
\end{align*}
and $P_{p\wedge q}^{n-2,|p-q|}$ stands for the Jacobi polynomial of degree $p\wedge q$ as defined in, e.g., \eqref{rodjac}.
Equivalently,
\begin{equation*}
  \boldsymbol{\lambda}\big(\mathcal{H}_{p,q}(\mathbb{S}^{n-1})\big)
  = \frac{c(p,q,n)}{2^{n + \frac{|p-q|}{2}}}\,\, \int_{-1}^{1} (1-s)^{n-2} (1+s)^{\frac{|p-q|}{2}} \,\big| P_{p\wedge q}^{n-2,|p-q|}(s)\big| ds\,.
\end{equation*}
  \end{theorem}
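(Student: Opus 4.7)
My plan starts from the integral formula of Theorem \ref{top}, which reads
\[
\boldsymbol{\lambda}\big(\mathcal{H}_{p,q}(\mathbb{S}_n)\big) = N_{n,p,q} \int_{\mathbb{S}_n} \big|L^{\diamond}_{n,p,q}(\eta_1)\big|\, d\sigma_n(\eta).
\]
The integrand only depends on $\eta_1 = \langle \eta, e_1\rangle$, so I can apply the integration formula \eqref{integration} to reduce the integral over $\mathbb{S}_n$ to a two-variable integral over $[0,1]\times[-\pi,\pi]$ in polar coordinates on the disk.

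Next I substitute the explicit representation from Theorem \ref{Koornwinder}: for $w = re^{i\theta}$,
\[
L^{\diamond}_{n,p,q}(w) = r^{|p-q|} e^{i(p-q)\theta}\,\frac{(p\wedge q)!(n-2)!}{((p\wedge q)+(n-2))!}\, P_{p\wedge q}^{n-2,|p-q|}(2r^2-1).
\]
Taking absolute values kills the factor $e^{i(p-q)\theta}$ (and makes the integrand $\theta$-independent), so the $\theta$-integral contributes a factor $2\pi$. This leaves the one-dimensional integral
\[
\int_0^1 (1-r^2)^{n-2}\, r^{|p-q|+1}\, \big|P_{p\wedge q}^{n-2,|p-q|}(2r^2-1)\big|\,dr.
\]
The change of variables $t = r^2$ (so $r\,dr = \tfrac{1}{2}\,dt$ and $r^{|p-q|} = t^{|p-q|/2}$) transforms this to the desired form.

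The remaining work, which I expect to be the only real bookkeeping obstacle, is the simplification of the constant in front. Using symmetry $|L^{\diamond}_{n,p,q}| = |L^{\diamond}_{n,q,p}|$ (which follows from the definition via complex conjugation), I may assume without loss of generality that $p \leq q$, so $p \wedge q = p$ and $p \vee q = q$. Then the accumulated prefactor is
\[
N_{n,p,q}\cdot 2(n-1)\cdot \frac{p!(n-2)!}{(p+n-2)!}\cdot \tfrac{1}{2},
\]
and plugging in the explicit dimension formula \eqref{dimfor-comp} produces massive cancellation of $p!$ and $(n-2+p)!$, leaving exactly
\[
(n+p+q-1)\,\frac{\Gamma(n-1+q)}{\Gamma(n-1)\,\Gamma(1+q)} \;=\; c(p,q,n).
\]
Finally, the equivalent formulation on $[-1,1]$ follows from the affine change of variables $s = 2t-1$, which produces the normalizing factor $2^{-n-|p-q|/2}$. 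The chief subtlety throughout is keeping track of which parameter is $p \vee q$ versus $p \wedge q$ after invoking the symmetry; everything else is a direct application of the theorems already established.
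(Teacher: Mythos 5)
Your argument is correct and is essentially the paper's own proof: both start from Theorem~\ref{top}, insert the representation of $L^{\diamond}_{n,p,q}$ from Theorem~\ref{Koornwinder}, integrate out the angular variable via \eqref{integration}, substitute $t=r^2$, and cancel against the dimension formula \eqref{dimfor-comp} to arrive at $c(p,q,n)$. One shared bookkeeping remark: the substitution $s=2t-1$ actually produces the factor $2^{-(n-1+\frac{|p-q|}{2})}$ rather than $2^{-(n+\frac{|p-q|}{2})}$, so the power of $2$ in the second display (as stated in the theorem and reproduced by you) appears to be off by one; this does not affect the main formula on $[0,1]$ or the structure of either argument.
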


Note that the case  $p \wedge q= 0 $ is precisely the Ryll-Wojtaszczyk result  described in Remark~\ref{RW}.

\begin{proof}
From  Lemma~\ref{lemma: repro} and Theorem~\ref{Koornwinder}, we deduce that  for every  $\eta \in \mathbb{S}^{n-1}$
with  $\eta_1 = r e^{i \theta}$ and $0\leq r \leq 1$, we have
\begin{align*}
\mathbf{k}_{\mathcal{H}_{p,q}(\mathbb{S}^{n-1})}(e_1, \eta)
= N_{n,p,q}\,r^{|p-q|}  e^{i(p-q)\theta}
\,
\frac{(p\wedge q)!(n-2)!}{((p\wedge q) + (n-2))!}
\,
P_{p\wedge q}^{n-2,|p-q|}(2r^2 -1)\,.
\end{align*}
Then by  the integral representation of  the projection constant of $\mathcal{H}_{p,q}(\mathbb{S}^{n-1})$ given in
Theorem~\ref{top}, Equation~\eqref{integration} and the substitution  $t=r^2 $, we get
\begin{align*}
\boldsymbol{\lambda}\big(\mathcal{H}_{p,q}(\mathbb{S}^{n-1})\big)
&=
N_{n,p,q}\,
\frac{(p\wedge q)!(n-2)!}{((p\wedge q) + (n-2))!}
\,
\frac{n-1}{\pi} 2 \pi  \int_{0}^{1} r^{|p-q|+1}  (1-r^2)^{n-2}
\big| P_{p\wedge q}^{n-2,|p-q|}(2r^2 -1) \big| \,dr
\\&
=
N_{n,p,q}\,
\frac{(p\wedge q)!(n-2)!}{((p\wedge q) + (n-2))!}
\,
(n-1)  \int_{0}^{1}   (1-t)^{n-2} t^{\frac{|p-q|}{2}}
\big| P_{p\wedge q}^{n-2,|p-q|}(2t -1) \big| \,dt\,.
\end{align*}
Finally,  the  equality first claimed follows from the dimension formula from Equation~\eqref{dimfor-comp},
and the second one clearly is a consequence of the substitution  $s = 2t-1 $.
\end{proof}

We now examine the general bihomogeneous case.

\begin{theorem} \label{bi-end}
Let $n \geq 2 $ and $p,q \geq 0 $. Then
 \begin{equation*}
  \boldsymbol{\lambda}\big(\mathfrak{P}_{p,q}(\mathbb{S}^{n-1})\big)
  \,\,= \,\, \frac{\Gamma(n + p \vee q )}{\Gamma(n-1)\Gamma( 1 + p \vee q )}\,\, \int_{0}^{1} (1-t)^{n-2} t^{\frac{|p-q|}{2}} \,\big| P_{p\wedge q}^{n-1,|p-q|}(2t -1)\big| dt\,,
\end{equation*}
or equivalently,
\begin{equation*}
  \boldsymbol{\lambda}\big(\mathfrak{P}_{p,q}(\mathbb{S}^{n-1})\big)
 \,\, = \frac{1}{2^{n + \frac{|p-q|}{2}}}\,\,  \frac{\Gamma(n + p \vee q )}{\Gamma(n-1)\Gamma( 1 + p \vee q )}\,\, \int_{-1}^{1} (1-s)^{n-2} (1+s)^{\frac{|p-q|}{2}} \,\big| P_{p\wedge q}^{n-1,|p-q|}(s)\big| ds\,.
\end{equation*}
  \end{theorem}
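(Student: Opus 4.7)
The plan is to mirror the proof of Theorem~\ref{jacobi}, augmenting it with a single summation step that collapses the harmonic decomposition into one Jacobi polynomial with an elevated first parameter. By Theorem~\ref{top} together with Lemma~\ref{accessibility-comp},
$$\boldsymbol{\lambda}(\mathfrak{P}_{p,q}(\mathbb{S}_n)) = \int_{\mathbb{S}_n} \Big|\sum_{j=0}^{p\wedge q} N_{n,p-j,q-j}\, L^{\diamond}_{n,p-j,q-j}(\eta_1)\Big|\, d\sigma_n(\eta).$$
Theorem~\ref{Koornwinder} rewrites each term at $\eta_1 = r e^{i\theta}$ as $r^{|p-q|} e^{i(p-q)\theta}$ (which has unit modulus and factors out of $|\cdot|$) multiplied by an explicit constant times $P_{(p\wedge q)-j}^{n-2,|p-q|}(2r^2-1)$. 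Reindexing by $m = (p\wedge q)-j$ and inserting the dimension formula \eqref{dimfor-comp}, the sum inside the absolute value reduces to $\sum_{m=0}^{p\wedge q} c_m\, P_m^{n-2,|p-q|}(2r^2-1)$ with
$$c_m = \frac{(2m + n + |p-q| - 1)(m + n + |p-q| - 2)!}{(m + |p-q|)!\,(n-1)!}.$$

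The key technical step is the Jacobi polynomial identity
$$\sum_{m=0}^{q} \frac{(2m+\alpha+\beta+1)\,\Gamma(m+\alpha+\beta+1)}{\Gamma(m+\beta+1)\,\Gamma(\alpha+2)}\, P_m^{\alpha,\beta}(s) = \binom{q+\alpha+\beta+1}{\alpha+1}\, P_q^{\alpha+1,\beta}(s),$$
applied with $\alpha = n-2$, $\beta = |p-q|$, and $q$ replaced by $p\wedge q$. I would prove it by expanding $P_q^{\alpha+1,\beta}$ in the orthogonal basis $\{P_m^{\alpha,\beta}\}_{m=0}^{q}$ for $L_2([-1,1],(1-s)^\alpha(1+s)^\beta\,ds)$, computing the connection coefficients through the Rodrigues formula \eqref{rodjac} for $P_q^{\alpha+1,\beta}$, repeated integration by parts, and the standard normalization of the Jacobi polynomials. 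This identity collapses the kernel to a scalar multiple of $r^{|p-q|} e^{i(p-q)\theta}\, P_{p\wedge q}^{n-1,|p-q|}(2r^2-1)$ with prefactor $\tfrac{\Gamma(n + p\vee q)}{\Gamma(n)\,\Gamma(1+p\vee q)}$.

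It then remains to push the resulting expression through the polar formula~\eqref{integration}: integration in $\theta$ produces $2\pi$, and the substitution $t = r^2$ converts $r^{|p-q|+1}\,dr$ to $\tfrac12 t^{|p-q|/2}\,dt$, giving an overall factor $(n-1) \cdot \tfrac{\Gamma(n + p\vee q)}{\Gamma(n)\,\Gamma(1+p\vee q)}$ which, via $\Gamma(n) = (n-1)\Gamma(n-1)$, simplifies to the claimed constant $\tfrac{\Gamma(n + p\vee q)}{\Gamma(n-1)\,\Gamma(1+p\vee q)}$. The change of variables $s = 2t-1$ then produces the second equivalent form. The main obstacle is the Jacobi summation identity: while such $\alpha \mapsto \alpha+1$ connection formulas are classical, pinning down the numerical constants exactly requires meticulous bookkeeping of gamma factors.
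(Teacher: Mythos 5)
Your proposal follows essentially the same route as the paper: expand the kernel via Lemma~\ref{accessibility-comp} and Theorem~\ref{Koornwinder}, collapse the sum of Jacobi polynomials $P_m^{n-2,|p-q|}$ into a single $P_{p\wedge q}^{n-1,|p-q|}$ via the addition formula, and finish with the polar integration formula \eqref{integration} and the substitution $t=r^2$; your coefficients $c_m$ and the resulting prefactor $\Gamma(n+p\vee q)/(\Gamma(n)\Gamma(1+p\vee q))$ agree exactly with the paper's computation. The only difference is that your key summation identity is precisely Szeg\"o's classical formula \eqref{SZ} (up to dividing by $\Gamma(\alpha+2)$), which the paper simply cites from \cite[(4.5.3)]{szeg1939orthogonal} rather than re-deriving.
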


Before the proof, we present an observation which will be crucial many times, so we isolate it as a remark.

\begin{remark} \label{iso piola}
Let $ p, q \geq 0 $. The Banach spaces $ \mathcal{H}_{p,q}(\mathbb{S}^{n-1}) $ and $ \mathcal{H}_{q,p}(\mathbb{S}^{n-1}) $ are isometrically
isomorphic. Similarly, the Banach spaces $ \mathfrak{P}_{p,q}(\mathbb{S}^{n-1}) $ and $ \mathfrak{P}_{q,p}(\mathbb{S}^{n-1}) $ are also
isometrically isomorphic. In particular, these spaces share the same projection constant. Of course, the isomorphism of the previous remark is established through the mapping $ f \mapsto \tilde{f} $, where
$ \tilde{f}(z) := f(\overline{z})$.
\end{remark}

\begin{proof}[Proof of Theorem~\ref{bi-end}]
In a first step we show that for every  $\eta \in \mathbb{S}^{n-1} $ with  $\eta_1 = r e^{i \theta}  $
\begin{equation}  \label{start}
  \mathbf{k}_{\mathfrak{P}_{p,q}(\mathbb{S}^{n-1})}(e_1, \eta)
    \, =   \,
    r^{|p-q|}  e^{i(p-q)\theta}  \, \frac{ \Gamma(  p \wedge q + n)}{\Gamma(n)\Gamma(1+p \wedge q)}
P_{p\wedge q}^{n-1, q-p}(2r^2 -1)\,,
 \end{equation}
and according to Lemma~\ref{accessibility-comp} we start to modify
\[
 \mathbf{k}_{\mathfrak{P}_{p,q}(\mathbb{S}^{n-1})}(e_1, \eta) =   \sum_{j=0}^{p\wedge q} \,N^\mathbb{C}_{n,p-j,q-j}\,L^{\diamond}_{n,p-j, q-j}(\eta_1)\,.
\]
By Theorem~\ref{Koornwinder} we have that for every  $\eta \in \mathbb{S}^{n-1} $ with  $\eta_1 = r e^{i \theta}  $
\[
 \mathbf{k}_{\mathfrak{P}_{p,q}(\mathbb{S}^{n-1})}(e_1, \eta) \, =   \,
  r^{|p-q|}  e^{i(p-q)\theta} \sum_{j=0}^{p\wedge q} \,
    \frac{((p-j)\wedge (q-j))!(n-2)!}{(((p-j)\wedge (q-j)) + (n-2))!}
   \,
  N^\mathbb{C}_{n,p-j,q-j}
  \,P_{(p-j)\wedge (q-j)}^{n-2,|p-q|}(2r^2 -1)\,.
\]
The aim now is to apply the following general addition formula of Szeg\"o's classical book \cite[(4.5.3), p.~71]{szeg1939orthogonal}: for every appropriate choice of  $\alpha, \beta, d $ and  $x $
\begin{equation}\label{SZ}
  \sum_{\nu=0}^d  \frac{(2\nu+\alpha+ \beta +1) \Gamma(\nu+\alpha+ \beta +1)}
{\Gamma(\nu+ \beta +1)} P_{\nu}^{\alpha, \beta}(x)
=
\frac{ \Gamma( d+\alpha+ \beta +2)}{\Gamma(d+\beta+1)}
P_{d}^{\alpha+1, \beta}(x)\,.
\end{equation}
In order to use this formula, we assume first that  $p\wedge q = p $, and put  $\alpha = n-2 $ and  $\beta = |p-q|= q-p $.
Then, using Equation~\eqref{dimfor-comp}, we get
\begin{align*}
&
\sum_{j=0}^{p} \,
  \frac{(p-j)! (n-2)!}{((p-j)+(n-2))!}
 \,
  N^\mathbb{C}_{n,p-j,q-j}
  \,P_{p-j}^{n-2,q-p}(2r^2 -1)
\\&
  \, =   \,
  \sum_{j=0}^{p} \,
  \frac{(p-j)! (n-2)!}{((p-j)+(n-2))!}
 \,
    \frac{(n+(p-j)+(q-j)-1) (n-2+(p-j))! (n-2+(q-j))!}{(p-j)!(q-j)!(n-1)!(n-2)!}
    \,P_{p-j}^{n-2,q-p}(2r^2 -1)
    \\&
  \, =   \,
  \sum_{j=0}^{p} \,
     \frac{(n+(p-j)+(q-j)-1)  (n-2+(q-j))!}{(q-j)!(n-1)!}
      \,P_{p-j}^{n-2,q-p}(2r^2 -1)
    \,,
  \end{align*}
  and consequently the substitution  $k = p-j $ leads to
  \begin{align*}
\sum_{j=0}^{p} \,
  \frac{(p-j)! (n-2)!}{((p-j)+(n-2))!}
 \,&
  N^\mathbb{C}_{n,p-j,q-j}
  \,P_{p}^{n-2,q-p}(2r^2 -1)
\\&
  \, =   \,
    \sum_{k=0}^{p} \,
     \frac{(n+2k +q-p-1)  (n+k +q-p-2)!}{(q+k-p)!(n-1)!}
      \,P_{k}^{n-2,q-p}(2r^2 -1)
    \,.
  \end{align*}
  On the other hand, by Equation~\eqref{SZ} for every  $0\leq r \leq 1  $
  \begin{align*}
        \sum_{k=0}^p & \frac{(2k+(n-2)+ (q-p) +1) \Gamma(k+(n-2)+ (q-p) +1)}
{\Gamma(k+ (q-p) +1)} P_{k}^{n-2, q-p}(2r^2 -1)
\\&
=
\frac{ \Gamma( p+(n-2)+ (q-p) +2)}{\Gamma(p+(q-p)+1)}
P_{p}^{(n-2)+1, q-p}(2r^2 -1)\,.
  \end{align*}
  Comparing the coefficients of the preceding two sums, we see that
  \[
  \frac{(n+2k +q-p-1)  (n+k +q-p-2))!}{(q+k-p)!(n-1)!} \,=\,\frac{(2k+(n-2)+ (q-p) +1) \Gamma(k+(n-2)+ (q-p) +1)}
{\Gamma(k+ (q-p) +1)(n-1)!}\,,
  \]
  so that all together, for every  $\eta \in \mathbb{S}^{n-1} $ with  $\eta_1 = r e^{i \theta}  $, we get
  \begin{align*}
        \mathbf{k}_{\mathfrak{P}_{p,q}(\mathbb{S}^{n-1})}(e_1, \eta) &\, =   \,
    \frac{1}{(n-1)!}\,
      r^{q-p}  e^{i(p-q)\theta} \sum_{j=0}^{p} \,
  \frac{(p-j)! (n-2)!}{(p-j)+(n-2)!}
 \,
  N^\mathbb{C}_{n,p-j,q-j}
  \,P_{p-j}^{n-2,q-p}(2r^2 -1)
  \\&
  \, =   \,
  \frac{1}{(n-1)!}\,
   r^{q-p}  e^{i(p-q)\theta} \,
\frac{ \Gamma( p+(n-2)+ (q-p) +2)}{\Gamma(p+(q-p)+1)}
P_{p}^{(n-2)+1, q-p}(2r^2 -1)
\\&
 \, =   \,
 \frac{\Gamma(n + q)}{\Gamma(n)\Gamma( 1 + q )}\,
   r^{q-p}  e^{i(p-q)\theta} \,P_{p}^{n-1, q-p}(2r^2 -1)\,.
  \end{align*}
  For the case that  $p\wedge q = p $ this is the desired result from Equation~\eqref{start}, and consequently for the dual case
  $p\wedge q = q $ this follows from the fact that the spaces  $\mathfrak{P}_{p,q}(\mathbb{S}^{n-1}) $ and  $\mathfrak{P}_{q,p}(\mathbb{S}^{n-1}) $
  are isomorphic as Banach spaces (see Remark~\ref{iso piola}).
  \end{proof}

\bigskip

\section{Asymptotic Behavior and Various Bounds}

\subsection{Equidistant parameters}

We now investigate the appropriate asymptotic behavior in the scenario where the parameters \( (p,q) \) take the form \( (p, p+d) \) for a fixed distance \( d \) as \( p \) grows to infinity (i.e., \( p \) and \( q \) remain equidistant). Note that \( \mathcal{H}_{p,p+d}(\mathbb{S}^{n-1}) \) is well-defined only if \( p + d \geq 0 \).

\begin{proposition} \label{asymp-harm}
For \( d \in \mathbb{Z} \) and \( n \ge 2 \),
\[\lim_{p \to + \infty}  \frac{\boldsymbol{\lambda}\big(\mathcal{H}_{p,p+d}(\mathbb{S}^{n-1})\big)}{p^{n- \frac{3}{2}}}\,\,=\,\,\frac{2\Gamma\left(\frac{2n-1}{4} \right) \Gamma\left(\frac{3}{4}\right) }{\pi^{\frac{3}{2}}\Gamma(n-1)\Gamma\left(\frac{n + 1}{2}\right)}\,.\]
\end{proposition}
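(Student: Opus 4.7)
The plan is to start from the integral representation of Theorem~\ref{jacobi}, which gives
\[
\boldsymbol{\lambda}\bigl(\mathcal H_{p,p+d}(\mathbb S_n)\bigr) = c(p,p+d,n)\, I_p, \qquad I_p = \int_0^1 (1-t)^{n-2} t^{|d|/2} \bigl|P_{p\wedge(p+d)}^{n-2,|d|}(2t-1)\bigr|\,dt.
\]
By Remark~\ref{iso piola} we may assume $d \ge 0$, so $p \wedge (p+d) = p$ and $p \vee (p+d)=p+d$. The prefactor then satisfies
\[
c(p,p+d,n) = (n+2p+d-1)\,\frac{\Gamma(n+p+d-1)}{\Gamma(n-1)\,\Gamma(p+d+1)} \,\sim\, \frac{2\,p^{\,n-1}}{\Gamma(n-1)}
\]
as $p\to\infty$, by the standard ratio $\Gamma(x+a)/\Gamma(x+b) \sim x^{a-b}$. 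So everything reduces to proving $I_p \sim \frac{\Gamma((2n-1)/4)\,\Gamma(3/4)}{\pi^{3/2}\,\Gamma((n+1)/2)}\,p^{-1/2}$.

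The next step is a change of variables $s = 2t-1$ followed by $s = \cos\theta$, rewriting
\[
I_p = \tfrac12 \int_0^\pi (\sin(\theta/2))^{2n-3}\,(\cos(\theta/2))^{d+1}\,\bigl|P_p^{n-2,d}(\cos\theta)\bigr|\,d\theta,
\]
into which I would insert the classical Darboux asymptotic formula for Jacobi polynomials (see, e.g., \cite[Theorem~8.21.13]{szeg1939orthogonal}):
\[
P_p^{n-2,d}(\cos\theta) = \frac{\cos(N\theta + \gamma)}{\sqrt{\pi p}\,(\sin(\theta/2))^{n-3/2}(\cos(\theta/2))^{d+1/2}} + O(p^{-3/2}),
\]
uniformly on compact sub-intervals of $(0,\pi)$, with $N = p + (n+d-1)/2$ and $\gamma = -(2n-3)\pi/4$. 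Cancelling, the integrand becomes $\frac{1}{\sqrt{\pi p}}(\sin(\theta/2))^{n-3/2}(\cos(\theta/2))^{1/2}\,|\cos(N\theta + \gamma)|$ up to the $O(p^{-3/2})$ error.

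The third step is a Riemann--Lebesgue type averaging. Expanding the absolutely convergent Fourier series $|\cos x| = \frac{2}{\pi} + \frac{4}{\pi}\sum_{k\ge 1}\frac{(-1)^{k+1}}{4k^2-1}\cos(2kx)$, for any $g \in L^1([0,\pi])$ one has
\[
\int_0^\pi g(\theta)\,|\cos(N\theta + \gamma)|\,d\theta \xrightarrow[N\to\infty]{} \frac{2}{\pi}\int_0^\pi g(\theta)\,d\theta,
\]
as the oscillatory cosine terms tend to zero individually by Riemann--Lebesgue and the series converges absolutely. Applied with $g(\theta) = (\sin(\theta/2))^{n-3/2}(\cos(\theta/2))^{1/2}$ and substituting $u=\theta/2$, the resulting Beta integral evaluates as
\[
\int_0^{\pi/2} \sin^{n-3/2}(u)\cos^{1/2}(u)\,du = \frac{\Gamma((2n-1)/4)\,\Gamma(3/4)}{2\,\Gamma((n+1)/2)}.
\]
Multiplying together $c(p,p+d,n) \sim 2p^{n-1}/\Gamma(n-1)$, the factor $1/\sqrt{\pi p}$ from Darboux, the $2/\pi$ from the $|\cos|$-averaging, and the Beta integral produces precisely the advertised constant.

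The main obstacle is justifying the passage from pointwise asymptotics to integral asymptotics at the two endpoints $\theta = 0$ and $\theta = \pi$, where the Darboux expansion degrades. I would truncate the interval by a shrinking window $[0,\delta/p]\cup[\pi-\delta/p,\pi]$ and control the contribution there using the classical bounds $|P_p^{n-2,d}(1)| = \binom{p+n-2}{p} \asymp p^{n-2}$ near $\theta = 0$ and $|P_p^{n-2,d}(-1)|\asymp p^{d}$ near $\theta = \pi$, combined with the vanishing weights $(\sin(\theta/2))^{2n-3}$ and $(\cos(\theta/2))^{d+1}$. A direct computation shows these endpoint contributions are $O(p^{-n})$ and $O(p^{-2})$ respectively, hence $o(p^{-1/2})$, so they are absorbed into the error when multiplied by the prefactor $c(p,p+d,n) \sim p^{n-1}$.
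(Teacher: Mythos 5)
Your overall strategy is sound and in fact more self-contained than the paper's: where the paper splits $\int_{-1}^{1}$ at the origin, converts the left half via the reflection $P_p^{(\alpha,\beta)}(t)=(-1)^{p}P_p^{(\beta,\alpha)}(-t)$, and then quotes Szeg\H{o}'s 1933 asymptotic for weighted $L_1$-norms of Jacobi polynomials as a black box, you re-derive that asymptotic on all of $(0,\pi)$ from the Darboux formula plus $|\cos|$-averaging. Your Darboux data ($N$, $\gamma$, the amplitude) are correct and the Riemann--Lebesgue step is fine. One technical caveat: the Darboux error term is uniform only on $[\varepsilon,\pi-\varepsilon]$ for \emph{fixed} $\varepsilon$, so to justify cutting at $\theta\asymp\delta/p$ you need the refined version of the expansion whose error is controlled down to $\theta\asymp 1/p$ (Szeg\H{o}, \emph{Orthogonal Polynomials}, (8.21.18)), not the compact-subinterval statement alone; your endpoint bounds themselves are correct.

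There is, however, a concrete error in the change of variables. With $s=2t-1$ followed by $s=\cos\theta$ one has \emph{exactly}
\[
\int_{0}^{1}(1-t)^{n-2}t^{d/2}\big|P_{p}^{n-2,d}(2t-1)\big|\,dt=\int_{0}^{\pi}\Big(\sin\tfrac{\theta}{2}\Big)^{2n-3}\Big(\cos\tfrac{\theta}{2}\Big)^{d+1}\big|P_{p}^{n-2,d}(\cos\theta)\big|\,d\theta,
\]
without your factor $\tfrac12$ (test with $n=2$, $d=0$, $P\equiv 1$: both sides equal $1$). Running your own computation with the correct identity gives $I_p\sim\frac{2}{\pi^{3/2}\sqrt{p}}\,\Gamma(\frac{2n-1}{4})\Gamma(\frac34)/\Gamma(\frac{n+1}{2})$, i.e.\ twice the value you say you need, and hence the limit $\frac{4\Gamma(\frac{2n-1}{4})\Gamma(\frac34)}{\pi^{3/2}\Gamma(n-1)\Gamma(\frac{n+1}{2})}$ rather than the stated one. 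This is not an artifact of your route: for $n=2$, $d=0$ the first display of Theorem~\ref{jacobi} gives $\boldsymbol{\lambda}(\mathcal H_{p,p}(\mathbb S_2))=(2p+1)\int_0^1|P_p(2t-1)|\,dt=(2p+1)\int_0^1|P_p(s)|\,ds$, and the Szeg\H{o} limit quoted in the paper's own proof gives $\sqrt p\int_0^1|P_p(s)|\,ds\to 4\Gamma(\tfrac34)^2/\pi^2$, whence the ratio tends to $8\Gamma(\tfrac34)^2/\pi^2$ --- twice the constant asserted in Proposition~\ref{asymp-harm} for $n=2$. The source of the mismatch is the second display of Theorem~\ref{jacobi}, where the normalization $2^{n+|p-q|/2}$ should read $2^{n-1+|p-q|/2}$; the paper's proof inherits this, and your spurious $\tfrac12$ happens to cancel against it and land on the printed constant. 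Fix the substitution, and then recheck the target constant against the Legendre case before concluding.
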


Our proof uses the  following well-known formula for the asymptotic of ratios of Gamma functions:
\begin{align}\label{mainasym}
\lim_{x\to + \infty} \frac{\Gamma(x + a)}{ \Gamma(x + b) x^{a-b}} = 1, \quad\,   a, b >0\,.
\end{align}
This formula combined with  Equation~\eqref{dimfor-comp} in particular proves  that for fixed  $n $ and large  $p $
\[
\dim \big(\mathcal{H}_{p,p+d}(\mathbb{S}^{n-1})\big) \sim_{c(n)} p^{2n -3}\,,
\]
showing that Proposition~\ref{asymp-harm} meets the Kadets-Snobar theorem. Recall that this result asserts that the projection constant
of any finite dimensional Banach space $X$ is bounded above by  $ \sqrt{\dim X}  $  (see, for instance, \cite[Theorem 10, III.B.]{wojtaszczyk1996banach}).

We will also make use of a simple but useful remark, which allows us to express a certain integral in terms of the Beta function. This
result will play a key role in simplifying subsequent computations.

\begin{remark}\label{rmk: function c}
The function $\boldsymbol{C}(u,v)$, defined as
\begin{equation}\label{eq: definition of C}
\boldsymbol{C}(u,v): = \int_{0}^{\frac{\pi}{2}} \left( \sin \frac{\theta}{2} \right)^{u} \left( \cos \frac{\theta}{2} \right)^{v}d\theta
+
\int_{0}^{\frac{\pi}{2}}  \left( \cos \frac{\theta}{2} \right)^{u} \left( \sin \frac{\theta}{2} \right)^{v}d\theta\,,
\end{equation}
where $u,v > 0$, can be expressed in terms of the Beta function as
\[
\boldsymbol{C}(u,v) = \boldsymbol{B}\left(\frac{u+1}{2}, \frac{v+1}{2}\right) = \frac{\Gamma\left(\frac{u+1}{2}\right) \Gamma\left(\frac{v+1}{2}\right)}{\Gamma\left(\frac{u+v+2}{2}\right)}\,.
\]
\end{remark}

\begin{proof}
Using the substitution $t = \frac{\theta}{2}$, we rewrite $\boldsymbol{C}(u,v)$ as
\begin{equation} \label{eq: def C}
2 \int_{0}^{\frac{\pi}{4}} \left( \sin t \right)^u \left( \cos t \right)^v dt + 2 \int_{0}^{\frac{\pi}{4}} \left( \cos t \right)^u \left( \sin t \right)^v dt.
\end{equation}
Next, we focus on the second integral and apply the substitution $s = \frac{\pi}{2} - t$. Using the trigonometric identities $\cos\left(\frac{\pi}{2} - s\right) = \sin s$ and $\sin\left(\frac{\pi}{2} - s\right) = \cos s$, we express this integral as
\[
\int_{0}^{\frac{\pi}{4}} \left( \cos t \right)^u \left( \sin t \right)^v dt = \int_{\frac{\pi}{4}}^{\frac{\pi}{2}} \left( \sin s \right)^u \left( \cos s \right)^v ds.
\]
Thus, if we replace this term in Equation \eqref{eq: def C}, we have
\[
\boldsymbol{C}(u,v) = 2 \int_{0}^{\frac{\pi}{2}} \left( \sin t \right)^u \left( \cos t \right)^v dt.
\]
Now, we use the known relation between the Beta function $B(x,y)$ and trigonometric integrals: $
B(x,y)  \linebreak = 2 \int_{0}^{\frac{\pi}{2}} \left( \sin t \right)^{2x-1} \left( \cos t \right)^{2y-1} dt.$
In our case, comparing the integrands, we see that$$
\boldsymbol{C}(u,v) = B\left(\frac{u+1}{2}, \frac{v+1}{2}\right).$$
The proof is concluded if we recall the definition of the Beta function $B(x, y)$ in terms of Gamma functions as $
B(x, y) = \frac{\Gamma(x) \Gamma(y)}{\Gamma(x + y)}.$
\end{proof}

We are ready to prove Proposition \ref{asymp-harm}.

\begin{proof}[Proof of Proposition~\ref{asymp-harm}]
By Remark \ref{iso piola}, it suffices to prove the case $d\ge 0.$
By Theorem~\ref{jacobi} we have
\begin{align*}
&
\frac{\boldsymbol{\lambda}\big(\mathcal{H}_{p,p+d}(\mathbb{S}^{n-1})\big)}{p^{n-\frac{3}{2}}}
\\&
=
\frac{1}{\Gamma(n-1) 2^{n + \frac{d}{2}}}
\,
\frac{n+2p +d-1}{p }
\,
\frac{\Gamma(n + p + d - 1)}{ p^{n-2} \Gamma(p+d +1)  }
\,\,
\bigg(
  \sqrt{p} \int_{-1}^{1} (1-s)^{n-2} (1+s)^{\frac{d}{2}} \,\big| P_{p}^{n-2,d}(s)\big|\,ds
 \bigg)\,.
  \end{align*}
Clearly, if  $p $ tends to infinity, then the second factor converges to  $2 $ and  the third factor by Equation~\eqref{mainasym}
to  $1 $, and hence it remains to handle the last factor.
Since  $2(n-2) - (n-2) + \frac{3}{2} > 0 $, it follows from   \cite[§19, Equation (6\raisebox{-0.5ex}{4}\raisebox{-1ex}{1}), p. 84]{szeg1933Asymptotische}  that
\[
\lim_{p\to + \infty} \sqrt{p} \int_{0}^{1} (1-s)^{n-2} (1+s)^{\frac{d}{2}} \,\big| P_{p}^{n-2,d}(s)\big| ds =
\frac{2^{n+\frac{d}{2}}}{\pi^{\frac{3}{2}}}
\int_{0}^{\frac{\pi}{2}} \big(\sin \frac{\theta}{2}\big)^{n-\frac{3}{2}} \big(\cos \frac{\theta}{2}\big)^{\frac{1}{2}}d\theta\,.
\]
 On the other hand, since by
        \cite[ 4.1.3, p.~59]{szeg1939orthogonal}
       \begin{align*} \label{SZ1}
      P_p^{n-2, d} (t) = (-1)^d P_p^{d,n-2} (-t), \quad\, t\in [-1, 1]\,,
    \end{align*}
we have
\begin{align*}
  \int_{-1}^{0} (1-s)^{n-2} (1+s)^{\frac{d}{2}} \,\big| P_{p}^{n-2,d}(s)\big|\,ds
  =
  \int_{0}^{1} (1-u)^{\frac{d}{2}}(1+u)^{n-2}  \,\big| P_{p}^{d,n-2}(u)\big|\,du\,,
\end{align*}
and hence again by Szeg\"o's result from \cite[§19, Equation (6\raisebox{-0.5ex}{4}\raisebox{-1ex}{1}), p. 84]{szeg1933Asymptotische}
\[
\lim_{p\to + \infty} \sqrt{p} \int_{-1}^{0} (1-s)^{n-2} (1+s)^{\frac{d}{2}} \,\big| P_{p}^{n-2,d}(s)\big| ds =
\frac{2^{n+\frac{d}{2}}}{\pi^{\frac{3}{2}}}
\int_{0}^{\frac{\pi}{2}} \big(\sin \frac{\theta}{2}\big)^{\frac{1}{2}}\big(\cos \frac{\theta}{2}\big)^{n-\frac{3}{2}} d\theta\,,
\]
where we now use that  $2 \frac{d}{2} - d + \frac{3}{2} > 0 $. Combining everything, we get that
\[
\lim_{p \to + \infty}  \frac{\boldsymbol{\lambda}\big(\mathcal{H}_{p,p+d}(\mathbb{S}^{n-1})\big)}{p^{n- \frac{3}{2}}}
= \frac{2}{\Gamma(n-1) 2^{n + \frac{d}{2}}}  \frac{2^{n+\frac{d}{2}}}{\pi^{\frac{3}{2}}} \boldsymbol{C}\Big(n-\frac{3}{2},\frac{1}{2}\Big)
=
\frac{2 }{\pi^{\frac{3}{2}}\Gamma(n-1)} \boldsymbol{C}\Big(n-\frac{3}{2},\frac{1}{2}\Big)\,,
\]
where $\boldsymbol{C}$ is the function defined in Equation \eqref{eq: definition of C}.
To conclude the proof we invoke Remark \ref{rmk: function c} to see that
\[
\boldsymbol{C}\left(n - \frac{3}{2}, \frac{1}{2}\right) =  \frac{\Gamma\left(\frac{2n-1}{4}\right)
\Gamma\left(\frac{3}{4}\right)}{\Gamma\left(\frac{n + 1}{2}\right)}\,. \qedhere
\]
\end{proof}

\smallskip

We now turn our attention to the asymptotic behavior of the $d$-stationary case (i.e., where the difference of the indices is exactly $d$). Thus, we study \( \boldsymbol{\lambda}\big(\mathfrak{P}_{p,p+d}(\mathbb{S}^{n-1})\big) \) when the degree \( p \) increases to infinity. Note that \( \mathfrak{P}_{p,p+d}(\mathbb{S}^{n-1}) \) is well-defined only if \( p + d \geq 0 \).

\begin{proposition} \label{end}
For \( d \in \mathbb{Z} \) and \( n \ge 2 \),
\[\lim_{p \to + \infty}  \frac{\boldsymbol{\lambda}\big(\mathfrak{P}_{p,p+d}(\mathbb{S}^{n-1})\big)}{p^{n- \frac{3}{2}}}  =\frac{\Gamma\left(\frac{2n - 3}{4}\right)\Gamma \left(\frac{3}{4}\right)}{\pi^{3/2}\Gamma(n-1)  \Gamma\left(\frac{n}{2}\right)}\,.\]
\end{proposition}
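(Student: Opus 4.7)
The plan is to mirror the argument in the proof of Proposition~\ref{asymp-harm}, but starting from the integral representation in Theorem~\ref{bi-end} rather than Theorem~\ref{jacobi}. By Remark~\ref{iso piola} it suffices to treat $d \geq 0$, so that $p \wedge q = p$, $p \vee q = p + d$, and $|p-q| = d$. Theorem~\ref{bi-end} then gives
\[
\frac{\boldsymbol{\lambda}\big(\mathfrak{P}_{p,p+d}(\mathbb{S}_n)\big)}{p^{n-\frac{3}{2}}}
= \frac{1}{2^{n + \frac{d}{2}} \Gamma(n-1)}\cdot \frac{\Gamma(n+p+d)}{p^{n-1}\,\Gamma(p+d+1)} \cdot \sqrt{p}\int_{-1}^{1} (1-s)^{n-2}(1+s)^{\frac{d}{2}} \big|P_p^{n-1, d}(s)\big|\,ds.
\]
By Equation~\eqref{mainasym}, the middle factor tends to $1$ as $p \to \infty$, so the problem reduces to computing the limit of $\sqrt{p}$ times the $L_1$-integral of the Jacobi polynomial $P_p^{n-1, d}$.

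Analyzing that integral is the crux. I would split $\int_{-1}^{1} = \int_{0}^{1} + \int_{-1}^{0}$ and apply the same Szegő asymptotic from \cite[\S 19,\,(6$_1$), p.~84]{szeg1933Asymptotische} invoked in the proof of Proposition~\ref{asymp-harm}, but now with the first Jacobi parameter $\alpha = n-1$ (one unit larger than in the harmonic case). Its hypothesis reads $2(n-2) - (n-1) + 3/2 = n - 3/2 > 0$, valid for all $n \geq 2$, and yields
\[
\lim_{p \to \infty} \sqrt{p} \int_{0}^{1} (1-s)^{n-2}(1+s)^{\frac{d}{2}} \big|P_p^{n-1,d}(s)\big|\,ds = \frac{2^{n+\frac{d}{2}}}{\pi^{\frac{3}{2}}} \int_{0}^{\pi/2} \Big(\sin\tfrac{\theta}{2}\Big)^{n - \frac{5}{2}} \Big(\cos\tfrac{\theta}{2}\Big)^{\frac{1}{2}}\,d\theta,
\]
where the key change compared to Proposition~\ref{asymp-harm} is the exponent $n - \frac{5}{2}$ in place of $n - \frac{3}{2}$ (this is what produces a different final constant). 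For the piece on $[-1, 0]$, substitute $s \mapsto -u$ and apply the reflection identity $P_p^{n-1, d}(-u) = (-1)^p P_p^{d, n-1}(u)$ quoted in the proof of Proposition~\ref{asymp-harm}; Szegő's formula again applies (the corresponding inequality reduces to $\tfrac{3}{2} > 0$) and yields the symmetric limit with the roles of $\sin(\theta/2)$ and $\cos(\theta/2)$ swapped.

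Summing the two pieces, the limit of $\sqrt{p}$ times the full integral equals $\frac{2^{n + d/2}}{\pi^{3/2}}\,\boldsymbol{C}(n - 5/2,\, 1/2)$, with $\boldsymbol{C}$ as in Remark~\ref{rmk: function c}. The factor $2^{n+d/2}$ cancels against the $2^{-(n+d/2)}$ in the prefactor, leaving
\[
\lim_{p \to \infty} \frac{\boldsymbol{\lambda}\big(\mathfrak{P}_{p,p+d}(\mathbb{S}_n)\big)}{p^{n - \frac{3}{2}}} = \frac{1}{\pi^{\frac{3}{2}}\,\Gamma(n-1)}\,\boldsymbol{C}\!\left(n - \tfrac{5}{2},\, \tfrac{1}{2}\right).
\]
Evaluating via Remark~\ref{rmk: function c}, $\boldsymbol{C}\!\left(n - \tfrac{5}{2}, \tfrac{1}{2}\right) = \Gamma\!\left(\tfrac{2n-3}{4}\right) \Gamma\!\left(\tfrac{3}{4}\right) / \Gamma\!\left(\tfrac{n}{2}\right)$, which gives the stated constant. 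The main obstacle is purely the careful tracking of Szegő's exponents for the shifted parameter $\alpha = n - 1$ (so that the $\sin(\theta/2)$-exponent becomes $n - \frac{5}{2}$, shifting the numerator Gamma from $\Gamma(\frac{2n-1}{4})$ to $\Gamma(\frac{2n-3}{4})$ and the denominator Gamma from $\Gamma(\frac{n+1}{2})$ to $\Gamma(\frac{n}{2})$), together with verifying that the convergence hypothesis of Szegő's formula still holds in this shifted regime.
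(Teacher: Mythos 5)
Your proposal is correct and follows essentially the same route as the paper's proof: reduce to $d\ge 0$ via Remark~\ref{iso piola}, start from Theorem~\ref{bi-end}, dispose of the Gamma-ratio factor with Equation~\eqref{mainasym}, split the integral at $0$, apply Szeg\H{o}'s asymptotic (with the shifted parameter $\alpha=n-1$ and the reflection identity for the negative half), and evaluate $\boldsymbol{C}\left(n-\tfrac{5}{2},\tfrac{1}{2}\right)$ via Remark~\ref{rmk: function c}. All the key exponent bookkeeping (the $n-\tfrac{5}{2}$ exponent and the two admissibility checks $n-\tfrac{3}{2}>0$ and $\tfrac{3}{2}>0$) matches the paper.
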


  \smallskip
 \noindent  Note that the preceding limit is independent of  $d $, which at first glance seems surprising.
Moreover, the result here shows that the asymptotic increase of  $\boldsymbol{\lambda}\big(\mathfrak{P}_{p,p+d}(\mathbb{S}^{n-1})\big) $ for fixed  $n $ and large  $p $ doesn't meet the Kadets-Snobar theorem. Indeed, we have that
\[
\lim_{p \to + \infty}  \frac{\Gamma(n)}{p^{n-1}}\,\dim \mathfrak{P}_{p,0}(\mathbb{S}^{n-1})
=
\lim_{p \to + \infty}
 \frac{\Gamma(n)}{p^{n-1}}\,
\frac{\Gamma(n+p)}{\Gamma(p+1) \Gamma(n)}
=1\,,
\]
and hence
\[
\lim_{p,q \to + \infty}  \frac{\dim \mathfrak{P}_{p,q}(\mathbb{S}^{n-1})}{p^{n-1} q^{n-1}}
=
\lim_{p \to + \infty}  \frac{\dim \mathfrak{P}_{p,0}(\mathbb{S}^{n-1})}{p^{n-1} }\,\,
\lim_{q \to + \infty}  \frac{\dim \mathfrak{P}_{q,0}(\mathbb{S}^{n-1})}{q^{n-1} }
=
\frac{1}{\Gamma(n)^2}\,,
\]
and in particular
\[
\lim_{p \to + \infty}  \frac{\sqrt{\dim \mathfrak{P}_{p,p+d}(\mathbb{S}^{n-1})}}{p^{n-1} }=
\frac{1}{\Gamma(n)}\,.
\]

\begin{proof}[Proof of Proposition~\ref{end}] We only sketch the argument which is basically exactly the same as that for  Corollary~\ref{asymp-harm}. By Remark \ref{iso piola} we need only to check the case $d\ge 0.$ From Theorem~\ref{bi-end} we have
\begin{align*}
\frac{\boldsymbol{\lambda}\big(\mathfrak{P}_{p,p+d}(\mathbb{S}^{n-1})\big)}{p^{n-\frac{3}{2}}}
=
\frac{1}{\Gamma(n-1) 2^{n + \frac{d}{2}}}
\,
\frac{\Gamma(n + p + d)}{ p^{n-1} \Gamma(p+d +1)  }
\,\,
\bigg(
  \sqrt{p} \int_{-1}^{1} (1-s)^{n-2} (1+s)^{\frac{d}{2}} \,\big| P_{p}^{n-1,d}(s)\big|\,ds
 \bigg)\,.
  \end{align*}
  Clearly, if  $p $ tends to infinity, then the second factor converges to  $1 $, and so we again only have to handle  the last integral factor. By  \cite[§19, Equation (6\raisebox{-0.5ex}{4}\raisebox{-1ex}{1}), p. 84]{szeg1933Asymptotische}
\[
\lim_{p\to + \infty} \sqrt{p} \int_{0}^{1} (1-s)^{n-2} (1+s)^{\frac{d}{2}} \,\big| P_{p}^{n-1,d}(s)\big|\,ds =
\frac{2^{n+\frac{d}{2}}}{\pi^{\frac{3}{2}}}
\int_{0}^{\frac{\pi}{2}} \big(\sin \frac{\theta}{2}\big)^{n-\frac{5}{2}} \big(\cos \frac{\theta}{2}\big)^{\frac{1}{2}}d\theta\,,
\]
since  $2(n-2) - (n-1) + \frac{3}{2} = n- \frac{3}{2} >0  $. On the other hand (as in the proof of Proposition~\ref{asymp-harm}),
\[
\lim_{p\to + \infty} \sqrt{p} \int_{-1}^{0} (1-s)^{n-2} (1+s)^{\frac{d}{2}} \,\big| P_{p}^{n-1,d}(s)\big|\,ds =
\frac{2^{n+\frac{d}{2}}}{\pi^{\frac{3}{2}}}
\int_{0}^{\frac{\pi}{2}} \big(\sin \frac{\theta}{2}\big)^{\frac{1}{2}}\big(\cos \frac{\theta}{2}\big)^{n-\frac{5}{2}}\,d\theta\,.
\]
Together, we obtain that
\[
\lim_{p \to + \infty}  \frac{\boldsymbol{\lambda}\big(\mathfrak{P}_{p,p+d}(\mathbb{S}^{n-1})\big)}{p^{n- \frac{3}{2}}}
= \frac{1}{\Gamma(n-1) 2^{n + \frac{d}{2}}}  \frac{2^{n+\frac{d}{2}}}{\pi^{\frac{3}{2}}} \boldsymbol{C}\Big(n-\frac{5}{2},\frac{1}{2}\Big)
=
\frac{1}{\pi^{\frac{3}{2}}\Gamma(n-1)} \boldsymbol{C}\Big(n-\frac{5}{2},\frac{1}{2}\Big)\,,
\]
where $\boldsymbol{C}$ is the function defined in Equation \eqref{eq: definition of C}.
Once again we use Remark \ref{rmk: function c} to see that
\[
\boldsymbol{C}\left(n - \frac{5}{2}, \frac{1}{2}\right) = \frac{\Gamma\left(\frac{2n - 3}{4}\right)  \Gamma\left(\frac{3}{4}\right)}{ \Gamma\left(\frac{n}{2}\right)}. \qedhere
\]
\end{proof}

\smallskip

\subsection{Upper estimates}

At this point, we turn our attention to providing upper bounds for the projection constant of the spaces under consideration.

\begin{proposition} \label{prop: bound bihom}
  Let   $n \ge 2  $ and $p,q \in \mathbb{N}_0$. Then,  $\boldsymbol{\lambda}\big(\mathfrak{P}_{p,q}(\mathbb{S}^{n-1})\big) $ is bounded by
  \begin{equation}\label{uno}
\frac{1}{\Gamma(n-1)}  \frac{\Gamma(n+p \wedge q)}{\Gamma(1+p) \Gamma(1+q)}
\sum_{m=0}^{p \wedge q} \binom{p \wedge q}{m} \frac{\Gamma(n+m+p \vee q)}{\Gamma(n+m)}
\int_{0}^{1} (1+t)^{n+m-2} t^{\frac{\vert p - q \vert}{2}}  dt\,,
 \end{equation}
So, if  $p \geq q $,
      \begin{equation}\label{due}
      \boldsymbol{\lambda}\big(\mathfrak{P}_{p,q}(\mathbb{S}^{n-1})\big)
  \leq
  \frac{1}{\Gamma(n-1)}  \frac{\Gamma(n+q)}{\Gamma(1+q)}
  \sum_{m=0}^{q} \binom{q}{m}
    \frac{\Gamma(n+m-1)}{\Gamma(n+m)}\frac{\Gamma(n+m+p)}{\Gamma(1+p)}
    \frac{\Gamma(\frac{p-q}{2}+1)}{\Gamma(n+m+\frac{p-q}{2})}\,.
      \end{equation}
    \vspace{2mm}
    In particular,
       \begin{equation}\label{tres}
       \limsup_{p \to + \infty} \, \boldsymbol{\lambda}\big(\mathfrak{P}_{p,q}(\mathbb{S}^{n-1})\big)
  \leq \frac{2^{n-1}}{\Gamma(n-1)}  \frac{\Gamma(n+q)}{\Gamma(1+q)}
  \sum_{m=0}^{q} \binom{q}{m}
    \frac{2^{m}}{n+m-1} \le 2^{n-1}3^q\binom{n-1+q}{q}
  \,.
      \end{equation}
    \end{proposition}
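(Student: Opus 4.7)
By Remark~\ref{iso piola} it suffices to prove~\eqref{due} and~\eqref{tres} for $p\ge q$, since both~\eqref{uno} and the projection constant itself are symmetric in $(p,q)$. The starting point is the integral formula of Theorem~\ref{bi-end}, in which the only non-explicit factor is $|P^{n-1,p-q}_q(2t-1)|$. My plan is to estimate this by the triangle inequality applied to the standard hypergeometric expansion of the Jacobi polynomial in powers of $\tfrac{x-1}{2}$, specialised at $x=2t-1$:
\begin{equation*}
P_q^{(n-1,p-q)}(2t-1)\;=\;\frac{\Gamma(n+q)}{\Gamma(1+q)\,\Gamma(n+p)}\sum_{m=0}^{q}\binom{q}{m}\frac{\Gamma(n+p+m)}{\Gamma(n+m)}(t-1)^m.
\end{equation*}
Inserting the resulting pointwise bound into Theorem~\ref{bi-end} and interchanging the finite sum with the integral directly produces~\eqref{uno}. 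To pass from~\eqref{uno} to~\eqref{due}, I recognise each of the remaining one-dimensional integrals as a Beta function,
\begin{equation*}
\int_0^1 (1-t)^{n+m-2}\,t^{(p-q)/2}\,dt\;=\;\frac{\Gamma(n+m-1)\,\Gamma(\tfrac{p-q}{2}+1)}{\Gamma(n+m+\tfrac{p-q}{2})},
\end{equation*}
and rearrange the Gamma-factors coming from the prefactor of~\eqref{uno}.

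For the asymptotic statement~\eqref{tres} I would pass to the $\limsup_{p\to+\infty}$ term by term in~\eqref{due}. The key ingredient is the classical ratio~\eqref{mainasym}, applied twice to the $p$-dependent Gamma-quotient:
\begin{equation*}
\lim_{p\to+\infty}\frac{\Gamma(n+m+p)}{\Gamma(1+p)}\cdot\frac{\Gamma(\tfrac{p-q}{2}+1)}{\Gamma(n+m+\tfrac{p-q}{2})}\;=\;\lim_{p\to+\infty} p^{\,n+m-1}\bigl(\tfrac{p-q}{2}\bigr)^{-(n+m-1)}\;=\;2^{\,n+m-1},
\end{equation*}
for every fixed $m\in\{0,\dots,q\}$. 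Summing over $m$ yields the first inequality in~\eqref{tres}. For the cruder second inequality, I rewrite $\frac{\Gamma(n+q)}{\Gamma(n-1)\Gamma(1+q)}=(n-1)\binom{n-1+q}{q}$, use the trivial estimate $\frac{n-1}{n+m-1}\le 1$, and then invoke the binomial identity $\sum_{m=0}^{q}\binom{q}{m}2^m=3^q$.

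The main technical choice is really the selection of the hypergeometric expansion for $P_q^{(n-1,p-q)}$: expanding in $\tfrac{x-1}{2}$ is what makes the prefactor cleanly absorb $\Gamma(n+p)/[\Gamma(n-1)\Gamma(1+p)]$ coming from Theorem~\ref{bi-end}, and what keeps the summand sharp enough to be compatible with the asymptotics of Proposition~\ref{end}. Alternative starting points---the symmetric expansion in $\tfrac{x+1}{2}$, or the Rodrigues formula~\eqref{rodjac}---either fail to telescope with that prefactor or introduce boundary contributions that spoil the uniform-in-$m$ control needed for the clean $2^{n+m-1}$ limit. Once this choice is made, everything else is a routine Beta-integral and Gamma-asymptotics computation.
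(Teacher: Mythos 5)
Your proposal is correct and follows essentially the same route as the paper's proof: the same hypergeometric expansion of $P_q^{(n-1,p-q)}$ in powers of $\frac{x-1}{2}$, the triangle inequality under the integral of Theorem~\ref{bi-end}, the Beta-function evaluation to pass to \eqref{due}, and the Gamma-ratio asymptotics \eqref{mainasym} yielding the factor $2^{n+m-1}$ together with the identity $\sum_{m}\binom{q}{m}2^m=3^q$ for \eqref{tres}. (Incidentally, the weight $(1-t)^{n+m-2}$ in your Beta integral is the correct one; the $(1+t)^{n+m-2}$ printed in \eqref{uno} is a typo in the statement.)
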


  Note that this result in the special case where  $q=0 $ recovers the bound   $\boldsymbol{\lambda}\big(\mathcal{P}_{p}(\ell_2^n(\mathbb{C}))\big)
  \leq 2^{n-1} $ proved by  Ryll and Wojtaszczyk in \cite[Proposition 1.1]{ryll1983homogeneous}.

\begin{proof}
Suppose first that  $p \geq q $, the other case follows from Remark~\ref{iso piola}. We use the following well-known explicit formula for Jacobi polynomials from \cite[Equation (4.21.2)]{szeg1939orthogonal}:
    \[
  P_d^{\alpha, \beta} (t)
  \,= \,
    \frac{\Gamma(\alpha + d +1)}{d!\Gamma(\alpha + \beta +d+1)}
          \sum_{m=0}^{d} \binom{d}{m}
    \frac{\Gamma(\alpha + \beta + d + m +1)}{\Gamma(\alpha+m +1)}
    \bigg( \frac{t-1}{2}\bigg)^m\,,
      \]
      and so in our special case
        \[
  P_q^{n-1, p-q} (t)
   \,= \,
    \frac{\Gamma(n+q)}{\Gamma(q+1)\Gamma(n+p)}
                 \sum_{m=0}^{q} \binom{q}{m}
                     \frac{\Gamma(n+m+p)}{\Gamma(n+m)}
    \bigg( \frac{t-1}{2}\bigg)^m\,.
      \]
      Then by Theorem~\ref{bi-end} and the triangle inequality
    \begin{align*}
  \boldsymbol{\lambda}\big(\mathfrak{P}_{p,q}(\mathbb{S}^{n-1})\big)
  &
    \,= \, \frac{\Gamma(n + p )}{\Gamma(n-1)\Gamma( 1 + p  )}
    \, \int_{0}^{1} (1-t)^{n-2} t^{\frac{p-q}{2}} \,\big| P_{ q}^{n-1,p-q}(2t -1)\big| dt
    \\&
    \,\leq \, \frac{\Gamma(n +q )}{\Gamma(n-1)\Gamma( 1 + p  )\Gamma( 1 + q  )}
    \,
    \sum_{m=0}^{q} \binom{q}{m}
                     \frac{\Gamma(n+m+p)}{\Gamma(n+m)}
                      \int_{0}^{1} (1-t)^{n-2} t^{\frac{p-q}{2}}
    \bigg| \frac{2t-2}{2}\bigg|^m dt
    \\&
    \,\leq \, \frac{\Gamma(n +q )}{\Gamma(n-1)\Gamma( 1 + p  )\Gamma( 1 + q  )}
    \,
    \sum_{m=0}^{q} \binom{q}{m}
                     \frac{\Gamma(n+m+p)}{\Gamma(n+m)}
                      \int_{0}^{1} (1-t)^{n+m-2} t^{\frac{p-q}{2}}
     dt\,.
\end{align*}
This is Estimate~\eqref{uno}, and then also Estimate~\eqref{due} follows from the standard reformulation of Beta functions in terms of Gamma functions.
To see the Estimate~\eqref{tres}, take the limit in Estimate~\eqref{due} as  $p $ tends infinity and
note  that by Equation~\eqref{mainasym}
\begin{align*}
\lim_{p \to + \infty} \frac{\Gamma(n+m+p)}{\Gamma(1+p)}
&
    \frac{\Gamma(\frac{p}{2}-\frac{q}{2}+1)}{\Gamma(n+m+\frac{p}{2}-\frac{q}{2})}
    \\&
    \,=\,
\lim_{p \to + \infty} \frac{\Gamma(n+m+p)}{\Gamma(1+p) p^{n+m-1}}
    \frac{\Gamma(\frac{p}{2}-\frac{q}{2}+1)}{\Gamma(n+m+\frac{p}{2}-\frac{q}{2})  p^{1-m-n}
        }
        \\&
    \,=\,
\lim_{p \to + \infty} \frac{\Gamma(n+m+p)}{\Gamma(1+p) p^{n+m-1}}
    \frac{\Gamma(\frac{p}{2}-\frac{q}{2}+1) 2^{m+n-1}}{\Gamma(n+m+\frac{p}{2}-\frac{q}{2}) \Big(\frac{p}{2}\Big)^{1-m-n}}\,=\,  2^{m+n-1}\,.
    \qedhere
  \end{align*}
\end{proof}

\smallskip

 Using Theorem~\ref{jacobi} instead  of Theorem~\ref{bi-end}, similar considerations lead to  the following harmonic counterpart.

\begin{proposition} \label{prop: bound harm}
  Let   $n \ge 2  $ and $p,q \in \mathbb{N}_0$. Then  $\boldsymbol{\lambda}\big(\mathcal{H}_{p,q}(\mathbb{S}^{n-1})\big) $ is bounded by
  \begin{equation}\label{uno-H}
  \frac{(n+p+q-1)\Gamma(n-1+p \wedge q)}{\Gamma(n-1)\Gamma(1+p) \Gamma(1+q)}
  \sum_{m=0}^{p \wedge q} \binom{p \wedge q}{m} \frac{\Gamma(n-1+m+p \vee q)}{\Gamma(n-1+m)}
  \int_{0}^{1} (1+t)^{n+m-2} t^{\frac{\vert p-q \vert}{2}}  dt\,,
  \end{equation}
  So, if  $p\geq q $,
      \begin{equation}\label{due-H}
      \boldsymbol{\lambda}\big(\mathcal{H}_{p,q}(\mathbb{S}^{n-1})\big)
      \leq
  \frac{\Gamma(n-1+q)}{\Gamma(n-1) \Gamma(1+q)}
  \sum_{m=0}^{q} \binom{q}{m}(n+p+q-1)
    \frac{\Gamma(n-1+m+p)}{\Gamma(1+p)}
        \frac{\Gamma(\frac{p-q}{2}+1)}{\Gamma(n+m+\frac{p-q}{2})}\,.
      \end{equation}
    \vspace{2mm}
    In particular,
       \begin{equation}\label{tres-H}
       \limsup_{p \to + \infty} \, \boldsymbol{\lambda}\big(\mathcal{H}_{p,q}(\mathbb{S}^{n-1})\big)
  \leq \frac{2^{n-1}}{\Gamma(n-1)}  \frac{\Gamma(n-1+q)}{\Gamma(1+q)}
  \sum_{m=0}^{q} \binom{q}{m} 2^{m} = 2^{n-1}3^q\binom{n-2+q}{q}
      \,.
      \end{equation}
    \end{proposition}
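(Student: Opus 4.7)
The plan is to mirror the proof of Proposition~\ref{prop: bound bihom} line by line, the only change being that Theorem~\ref{jacobi} replaces Theorem~\ref{bi-end}. Two bookkeeping differences arise: the Jacobi polynomial in the integrand becomes $P_{p\wedge q}^{n-2,|p-q|}$ instead of $P_{p\wedge q}^{n-1,|p-q|}$, and the prefactor is now $c(p,q,n) = (n+p+q-1)\Gamma(n-1+p\vee q)/(\Gamma(n-1)\Gamma(1+p\vee q))$. After reducing to the case $p \geq q$ via Remark~\ref{iso piola}, I apply the same standard expansion of the Jacobi polynomial around $t=1$ used in the bihomogeneous proof, but with first parameter $\alpha=n-2$ rather than $\alpha=n-1$:
\[
P_q^{n-2,p-q}(t) \,=\, \frac{\Gamma(n-1+q)}{q!\,\Gamma(n-1+p)}\sum_{m=0}^{q}\binom{q}{m}\frac{\Gamma(n-1+p+m)}{\Gamma(n-1+m)}\bigg(\frac{t-1}{2}\bigg)^{m}.
\]

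Next I substitute this expansion into the integral representation of $\boldsymbol{\lambda}(\mathcal{H}_{p,q}(\mathbb{S}_n))$ given by Theorem~\ref{jacobi}, use $|2t-2|^m=2^m(1-t)^m$, and apply the triangle inequality to pull the absolute value inside the sum. This collapses the problem to a sum of elementary Beta integrals of type $\int_0^1 (1-t)^{n+m-2}\, t^{(p-q)/2}\,dt$; multiplying by $c(p,q,n)$ and collecting Gamma factors yields the bound~\eqref{uno-H}. The reformulation~\eqref{due-H} then follows from the standard Beta-to-Gamma identity, where the factor $\Gamma(n+m-1)$ coming from the Beta integral partially cancels against the $\Gamma(n-1+m)$ in the Jacobi expansion, leaving exactly the quotient $\Gamma(n-1+m+p)\Gamma((p-q)/2+1)/(\Gamma(1+p)\Gamma(n+m+(p-q)/2))$ that appears in the statement.

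To obtain \eqref{tres-H}, I let $p \to \infty$ in \eqref{due-H} and invoke \eqref{mainasym} term by term. The key asymptotic to verify is
\[
(n+p+q-1)\,\frac{\Gamma(n-1+m+p)}{\Gamma(1+p)}\,\frac{\Gamma((p-q)/2+1)}{\Gamma(n+m+(p-q)/2)} \,\longrightarrow\, 2^{n+m-1},
\]
i.e.\ the extra linear factor $(n+p+q-1)\sim p$ in $c(p,q,n)$ exactly compensates for the loss of one power of $p$ coming from the shift of the Jacobi index $n-1\to n-2$ (so $\Gamma(n-1+m+p)/\Gamma(1+p)\sim p^{n+m-2}$ rather than $p^{n+m-1}$). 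Once this is checked, each summand tends to the same constant $2^{n+m-1}$ as in the bihomogeneous case; summing $\sum_{m=0}^q \binom{q}{m}2^m = 3^q$ and identifying $\Gamma(n-1+q)/(\Gamma(n-1)\Gamma(1+q)) = \binom{n-2+q}{q}$ produces the stated bound. The only genuinely delicate point throughout is this arithmetic: verifying that the simultaneous shift of the Jacobi index and the appearance of $(n+p+q-1)$ leave the asymptotic constant untouched and only change the binomial coefficient from $\binom{n-1+q}{q}$ in the bihomogeneous case to $\binom{n-2+q}{q}$ in the harmonic one.
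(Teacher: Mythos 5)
Your proposal is correct and is exactly the argument the paper intends: the paper's own ``proof'' of Proposition~\ref{prop: bound harm} is the single remark that Theorem~\ref{jacobi} replaces Theorem~\ref{bi-end} in the proof of Proposition~\ref{prop: bound bihom}, and your line-by-line adaptation (expansion of $P_q^{n-2,p-q}$ about $t=1$, triangle inequality, Beta integrals, cancellation of $\Gamma(n+m-1)$ against $\Gamma(n-1+m)$, and the verification via \eqref{mainasym} that the factor $(n+p+q-1)\sim p$ compensates the index shift so each summand still tends to $2^{n+m-1}$) supplies precisely the omitted details. The only cosmetic point is that the integrand in \eqref{uno-H} should read $(1-t)^{n+m-2}$ rather than $(1+t)^{n+m-2}$ --- a typo inherited from \eqref{uno} --- and your derivation correctly produces the $(1-t)$ version needed for \eqref{due-H}.
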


We note that as a consequence both  sequences of projection constants $\left(\boldsymbol{\lambda}\big(\mathcal{H}_{p,q}(\mathbb{S}^{n-1})\big)\right)_{p\in\mathbb N}$ and $\left(\boldsymbol{\lambda}\big(\mathfrak{P}_{p,q}(\mathbb{S}^{n-1})\big)\right)_{p\in\mathbb N}$ are bounded, although
their dimensions increase to infinity when  $p $ does:
  \[
  \text{
   $\dim \mathcal{H}_{p,q}(\mathbb{S}^{n-1}) = \binom{n-2+p}{p}\binom{n-2+q}{q}\frac{n-1+p+q}{n-1} $
  \quad and \quad
   $\dim \mathfrak{P}_{p,q}(\mathbb{S}^{n-1}) = \binom{n-1+p}{p}\binom{n-1+q}{q} $\,.}
  \]

\subsection{The case  $\pmb{q=1} $}
We now consider the case where the parameter  $ q $ equals  $1  $. First we note that a careful look to the calculations of the previous section give the following estimates.
\begin{proposition}\label{prop: estimate q=2}
For every $n\ge 2$ and and $p \in \mathbb{N}_0$,
       \begin{equation*}
(n-1)2^{n-1} \le       \liminf_{p \to + \infty} \, \boldsymbol{\lambda}\big(\mathcal{H}_{p,1}(\mathbb{S}^{n-1})\big)
  \leq \limsup_{p \to + \infty} \, \boldsymbol{\lambda}\big(\mathcal{H}_{p,1}(\mathbb{S}^{n-1})\big)
  \leq 3(n-1){2^{n-1}},
  \end{equation*}
and 
\begin{equation*}
(n-2)2^{n-1} \le       \liminf_{p \to + \infty} \, \boldsymbol{\lambda}\big(\mathfrak{P}_{p,1}(\mathbb{S}^{n-1})\big)
  \leq \limsup_{p \to + \infty} \, \boldsymbol{\lambda}\big(\mathfrak{P}_{p,1}(\mathbb{S}^{n-1})\big)
  \leq (3n-2){2^{n-1}}.
  \end{equation*}
\end{proposition}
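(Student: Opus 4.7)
\emph{Proof plan.}
The two upper estimates are immediate by specialising Propositions~\ref{prop: bound bihom} and~\ref{prop: bound harm} at $q=1$, provided one keeps the sharper intermediate form of \eqref{tres}. From \eqref{tres-H} with $q=1$ one gets $\limsup \boldsymbol{\lambda}(\mathcal H_{p,1}(\mathbb S_n))\le 2^{n-1}\Gamma(n)(1+2)/\Gamma(n-1)=3(n-1)2^{n-1}$. For the bihomogeneous case the looser right-hand side of \eqref{tres} would only yield $3n\cdot 2^{n-1}$, so we instead invoke the middle expression in \eqref{tres}: specialising to $q=1$ produces
\[
\limsup_{p\to+\infty}\boldsymbol{\lambda}\big(\mathfrak P_{p,1}(\mathbb S_n)\big)\le\frac{2^{n-1}}{\Gamma(n-1)}\,\Gamma(n+1)\Bigl(\tfrac{1}{n-1}+\tfrac{2}{n}\Bigr)=2^{n-1}(3n-2).
\]

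The lower bounds follow from the integral representations of Theorems~\ref{jacobi} and~\ref{bi-end} combined with the elementary inequality $\int |f|\,g\,dt\ge\bigl|\int f\,g\,dt\bigr|$ applied with the nonnegative weight $g(t)=(1-t)^{n-2}t^{(p-1)/2}$. Starting from the explicit formula $P_1^{\alpha,\beta}(t)=\tfrac{1}{2}\bigl((\alpha-\beta)+(\alpha+\beta+2)t\bigr)$ one computes
\[
P_1^{n-2,p-1}(2t-1)=(n+p-1)t-p,\qquad P_1^{n-1,p-1}(2t-1)=(n+p)t-p,
\]
so the relevant signed integrals split as a linear combination of $B\bigl(n-1,\tfrac{p+1}{2}\bigr)$ and $B\bigl(n-1,\tfrac{p+3}{2}\bigr)$. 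After passing to Gamma functions and using the identity $\Gamma(z+1)=z\Gamma(z)$, both collapse to the common shape
\[
\Gamma(n-1)\,\frac{\Gamma\bigl(\tfrac{p+1}{2}\bigr)}{\Gamma\bigl(n+\tfrac{p-1}{2}\bigr)}\cdot\frac{N(p,n)}{2n+p-1},
\]
with $N(p,n)=-(n-1)(p-1)$ in the harmonic case and $N(p,n)=n-(n-2)p$ in the bihomogeneous case.

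It then remains to multiply by the corresponding prefactors $c(p,1,n)=(n+p)\tfrac{\Gamma(n+p-1)}{\Gamma(n-1)\Gamma(p+1)}$ and $\tfrac{\Gamma(n+p)}{\Gamma(n-1)\Gamma(p+1)}$ and to pass to the limit $p\to+\infty$ via the Gamma-ratio asymptotic \eqref{mainasym}: the prefactors grow like $p^{n-1}/\Gamma(n-1)$, the quotient $\Gamma(\tfrac{p+1}{2})/\Gamma(n+\tfrac{p-1}{2})$ decays like $2^{n-1}p^{-(n-1)}$, and $|N(p,n)|/(2n+p-1)$ tends respectively to $n-1$ and $n-2$. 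Multiplying these three factors yields the claimed lower bounds $(n-1)2^{n-1}$ and $(n-2)2^{n-1}$. No deep obstacle is expected; the only delicate point is the Gamma-function simplification that collapses the apparently divergent difference of two Beta integrals into a single clean ratio before the asymptotic is taken.
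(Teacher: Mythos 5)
Your proposal is correct and follows essentially the same route as the paper: the upper bounds are read off from \eqref{tres-H} and the sharper middle expression in \eqref{tres} at $q=1$, and the lower bounds use $\int_0^1 g\,|P_1|\,dt\ge\bigl|\int_0^1 g\,P_1\,dt\bigr|$ for the two-term degree-one Jacobi polynomial, reducing to a difference of Beta integrals whose asymptotics give $(n-1)2^{n-1}$ and $(n-2)2^{n-1}$. Your intermediate quantities ($N(p,n)=-(n-1)(p-1)$ and $N(p,n)=n-(n-2)p$, and the Gamma-ratio limits) all check out, so this is a faithful, fully detailed version of the paper's two-sentence argument.
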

The upper bounds are those in \eqref{tres-H}
and \eqref{tres}. The lower bounds follow a similar approach: since the integral formula contains only two terms, the projection constant can be bounded below by the difference of their values.

Now we deal with the case  $p \geq 2 $ in the two-dimensional setting.

\begin{proposition} \label{nazibude}
  For every  $p \ge 2  $
  \[
  \boldsymbol{\lambda}\big(\mathcal{H}_{p,1}(\mathbb{S}^{1})\big)
  \,= \,(p+2)  \left( \frac{8}{p+3}\Big( \frac{p}{p+1}\Big)^{\frac{p+3}{2}} + \frac{2}{p+1} -\frac{4}{p+3}\right)
  \]
  and
  \[
  \boldsymbol{\lambda}\big(\mathfrak{P}_{p,1}(\mathbb{S}^{1})\big)
  \,= \,(p+1)  \left( \frac{8(p+2)}{(p+1)(p+3)}\Big( \frac{p}{p+2}\Big)^{\frac{p+3}{2}} + \frac{4}{p+1} -\frac{4(p+2)}{(p+1)(p+3)}\right)\,.
  \]
  In particular,
  \[
  \lim_{p \to + \infty} \boldsymbol{\lambda}\big(\mathcal{H}_{p,1}(\mathbb{S}^{1})\big)
    =
  \frac{8}{\sqrt{e}} -2
  \]
  and
    \[
    \lim_{p \to + \infty} \boldsymbol{\lambda}\big(\mathfrak{P}_{p,1}(\mathbb{S}^{1})\big)
  =
  \frac{8}{e}\,.
  \]
      \end{proposition}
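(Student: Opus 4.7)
The plan is to feed the two integral formulas from Theorems~\ref{jacobi} and \ref{bi-end} the explicit form of the degree-one Jacobi polynomial and then carry out two elementary integrals. Specializing to $n=2$ and $q=1$, with $p\wedge q=1$, $|p-q|=p-1$ and $c(p,1,2)=p+2$, Theorem~\ref{jacobi} gives
\[
\boldsymbol{\lambda}\big(\mathcal{H}_{p,1}(\mathbb{S}_{2})\big)
=(p+2)\int_{0}^{1} t^{\frac{p-1}{2}}\,\big|P_{1}^{0,p-1}(2t-1)\big|\,dt,
\]
and Theorem~\ref{bi-end} gives
\[
\boldsymbol{\lambda}\big(\mathfrak{P}_{p,1}(\mathbb{S}_{2})\big)
=(p+1)\int_{0}^{1} t^{\frac{p-1}{2}}\,\big|P_{1}^{1,p-1}(2t-1)\big|\,dt.
\]

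The key observation is that $P_1^{\alpha,\beta}(x)=\tfrac12\big[(\alpha-\beta)+(\alpha+\beta+2)x\big]$, which after the substitution $x=2t-1$ collapses to $P_{1}^{0,p-1}(2t-1)=(p+1)t-p$ and $P_{1}^{1,p-1}(2t-1)=(p+2)t-p$. Hence the integrands change sign at a single point: $t_0=p/(p+1)$ in the harmonic case, and $t_0=p/(p+2)$ in the bihomogeneous case. I would split each integral at its $t_0$, use the antiderivatives of $t^{\frac{p-1}{2}}$ and $t^{\frac{p+1}{2}}$, and simplify. A short calculation shows that the boundary terms at $t_0$ combine into a single power $t_0^{(p+3)/2}$, producing the closed forms stated in the proposition; the constant (non-power) remainder comes purely from the values at $t=1$ and reduces, after the common-denominator trick
\[
\tfrac{2(p+1)}{p+3}-\tfrac{2p}{p+1}=\tfrac{2}{p+1}-\tfrac{4}{p+3}, \qquad
\tfrac{2(p+2)}{p+3}-\tfrac{2p}{p+1}=\tfrac{4}{p+1}-\tfrac{4(p+2)}{(p+1)(p+3)},
\]
to the exact expressions displayed in the statement.

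For the limits one only has to read off the growth of the prefactor against the algebraic terms: $(p+2)\cdot\tfrac{8}{p+3}\to 8$ and $(p+1)\cdot\tfrac{8(p+2)}{(p+1)(p+3)}\to 8$, while the two $\tfrac{1}{p+1}$–$\tfrac{1}{p+3}$ pieces cancel up to $-2$ and $0$ respectively. The only slightly subtle limits are the exponentials
\[
\bigg(\frac{p}{p+1}\bigg)^{\!\frac{p+3}{2}}\longrightarrow e^{-\frac12},\qquad
\bigg(\frac{p}{p+2}\bigg)^{\!\frac{p+3}{2}}\longrightarrow e^{-1},
\]
which come from rewriting each base as $1-\tfrac{1}{p+1}$ or $1-\tfrac{2}{p+2}$ and using the standard $(1-\tfrac{c}{m})^{m}\to e^{-c}$. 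This yields $8/\sqrt{e}-2$ and $8/e$, matching the claim.

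The argument has no genuinely hard step: the only thing one must take care of is correctly determining the sign of the linear Jacobi polynomial on $[0,1]$ and keeping track of the powers $(p+1)/2$ versus $(p+3)/2$ when converting $t_0^{(p+1)/2}$ into $t_0^{(p+3)/2}$, which is what produces the clean form in the proposition. Beyond this bookkeeping the computation is entirely mechanical.
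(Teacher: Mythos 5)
Your proposal is correct and follows the paper's proof essentially verbatim: both specialize Theorems~\ref{jacobi} and~\ref{bi-end} to $n=2$, $q=1$, use the explicit linear form of $P_1^{\alpha,\beta}(2t-1)$ to locate the single sign change at $p/(p+1)$ resp.\ $p/(p+2)$, split the integral there, and integrate the resulting powers of $t$ directly. All the constants, the combination of boundary terms into $t_0^{(p+3)/2}$, and the limit computations check out.
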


      \smallskip


      \begin{proof}[Proof of Proposition~\ref{nazibude}]
             The proof is simple - note first that
 \begin{equation}\label{degree1}
   P_1^{\alpha, \beta}(2t -1) = \frac{1}{2}\big((\alpha-\beta) + (\alpha+\beta+2)(2t -1)\big)
   = -(1+\beta)+(\alpha+\beta+2)t \,,
 \end{equation}
and then in the special case  $\beta = p-1 $ and  $n=2 $
\begin{equation}\label{degree1special}
\begin{split}
&
   P_1^{0, p-1}(2t -1) = -p + (p+1)t\,, \quad t_p=\frac{p}{p+1}
   \\&
     P_1^{1, p-1}(2t -1) =  -p + (p+2)t\,, \quad s_p=\frac{p}{p+2}
\end{split}
    \end{equation}
    where  $t_p $ resp.  $s_p $ stand for the zero of the corresponding polynomial in the interval  $[0,1] $. By
Theorem~\ref{jacobi} and Theorem~\ref{bi-end} we have
       \begin{align*}
  \boldsymbol{\lambda}\big(\mathcal{H}_{p,1}(\mathbb{S}^{1})\big)
  &
  = (p+2)\,\, \int_{0}^{1}  t^{\frac{p-1}{2}} \,\big| P_{1}^{0,p-1}(2t -1)\big| dt
  \\&
  =(p+2)\,\bigg( \int_{0}^{t_p}  t^{\frac{p-1}{2}} \,(p - (p+1)t) dt + \int_{t_p}^{1}  t^{\frac{p-1}{2}} \,(-p + (p+1)t) \;dt\bigg)
  \,,
\end{align*}
and
\begin{align*}
  \boldsymbol{\lambda}\big(\mathfrak{P}_{p,1}(\mathbb{S}^{1})\big)
  &
  = (p+1)\,\, \int_{0}^{1}  t^{\frac{p-1}{2}} \,\big| P_{1}^{1,p-1}(2t -1)\big| dt
  \\&
  =(p+1)\,\bigg( \int_{0}^{s_p}  t^{\frac{p-1}{2}} \,(p - (p+2)t) dt + \int_{s_p}^{1}  t^{\frac{p-1}{2}} \,(-p + (p+2)t)\;dt\bigg)
  \,,
\end{align*}
Then the two formulas for the projection constants (and as a consequence the claimed limits) follow by  partial integration.
          \end{proof}

We now consider the case where both parameters are equal to one in any dimension  $ n \geq 2  $.

\begin{proposition}\label{caso 1-1}
  For every  $n \ge 2  $,
  \[
  \boldsymbol{\lambda}\big(\mathcal{H}_{1,1}(\mathbb{S}^{n-1})\big)
  \,= \,2 (n+1) \Big(1-\frac{1}{n}\Big)^n\,
  \]
  and
  \[
  \boldsymbol{\lambda}\big(\mathfrak{P}_{1,1}(\mathbb{S}^{n-1})\big)
  \,= \,2 (n+1) \Big(1-\frac{1}{n+1}\Big)^n -1\,.
  \]
\end{proposition}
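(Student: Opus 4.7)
The plan is to mirror the computational strategy used in Proposition~\ref{nazibude}, specializing to the case $p=q=1$. Since here $p \wedge q = 1$, $p \vee q = 1$, and $|p-q|=0$, both Theorem~\ref{jacobi} and Theorem~\ref{bi-end} collapse to integrals of the form
\[
\int_0^1 (1-t)^{n-2}\,\big|P_1^{\alpha,0}(2t-1)\big|\,dt,
\]
with $\alpha = n-2$ for the harmonic case and $\alpha = n-1$ for the bihomogeneous case. The multiplicative constants are
$c(1,1,n) = (n+1)(n-1)$ for $\mathcal{H}_{1,1}(\mathbb{S}_n)$ and $\frac{\Gamma(n+1)}{\Gamma(n-1)\Gamma(2)} = n(n-1)$ for $\mathfrak{P}_{1,1}(\mathbb{S}_n)$.

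Next, I would invoke the explicit formula \eqref{degree1} to get $P_1^{\alpha,0}(2t-1) = -1 + (\alpha+2)t$, which vanishes at $t = 1/(\alpha+2)$. Thus the unique zero in $[0,1]$ is $t_n = 1/n$ (for $\alpha=n-2$) and $s_n = 1/(n+1)$ (for $\alpha = n-1$). The absolute value can then be removed by splitting each integral at the corresponding root.

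The key computational trick is to rewrite the linear factor as
\[
(\alpha+2)t - 1 \;=\; -(\alpha+2)(1-t) + (\alpha+1),
\]
so that $(1-t)^{n-2}\big((\alpha+2)t-1\big) = -(\alpha+2)(1-t)^{n-1} + (\alpha+1)(1-t)^{n-2}$ admits the elementary antiderivative
\[
\tfrac{\alpha+2}{n}(1-t)^n - \tfrac{\alpha+1}{n-1}(1-t)^{n-1}.
\]
Evaluating this antiderivative on $[0,1/(\alpha+2)]$ and $[1/(\alpha+2),1]$ and summing the two pieces yields a clean closed form in terms of $(1 - 1/(\alpha+2))^{n-1}$ and $(1-1/(\alpha+2))^n$, plus boundary constants.

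For the harmonic case ($\alpha=n-2$), the integral simplifies to $\tfrac{2}{n}(1-1/n)^{n-1}$, and multiplying by $(n+1)(n-1)$ together with the identity $(n-1)(1-1/n)^{n-1} = n(1-1/n)^n$ produces the stated $2(n+1)(1-1/n)^n$. For the bihomogeneous case ($\alpha=n-1$), the boundary terms survive: the integral equals $\tfrac{2}{n-1}(n/(n+1))^{n-1} - \tfrac{1}{n(n-1)}$, and multiplication by $n(n-1)$ followed by the rewriting $(n/(n+1))^{n-1} = \tfrac{n+1}{n}(1-1/(n+1))^n$ gives $2(n+1)(1-1/(n+1))^n - 1$. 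There is no serious obstacle here; the only care needed is bookkeeping when combining the two subintervals so that the boundary contributions at $t=0$ (which vanish in the harmonic case but not in the bihomogeneous case, since the antiderivative's coefficients depend asymmetrically on $\alpha$) collapse correctly into the claimed expressions.
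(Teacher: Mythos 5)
Your proposal is correct and follows essentially the same route as the paper, whose proof of Proposition~\ref{caso 1-1} is precisely the one-line reduction to Theorems~\ref{jacobi} and~\ref{bi-end} with the explicit formula \eqref{degree1} for $P_1^{n-2,0}$ and $P_1^{n-1,0}$, exactly as in Proposition~\ref{nazibude}. Your constants, the zeros $1/n$ and $1/(n+1)$, and both evaluated integrals check out, so the added detail simply fills in what the paper leaves as "a simple consequence."
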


Note that by Equation~\eqref{dimfor-comp}
\[
\text{
 $\dim \mathcal{H}_{1,1}(\mathbb{S}^{n-1}) = (n+1)(n-1) $
\quad and \quad
 $\dim \mathfrak{P}_{1,1}(\mathbb{S}^{n-1}) = n^2 $\,.
}
\]

\begin{proof}
  Similarly to the proof of Proposition~\ref{nazibude}, this is again a simple consequence of  Theorem~\ref{jacobi} and  Theorem~\ref{bi-end}, using the explicit representation of   $P_1^{n-2,0}(2t-1) $ and  $P_1^{n-1,0}(2t-1) $ given by Equation~\eqref{degree1}.
  \end{proof}

\begin{remark}
    In this paper, the dimension $n$ is kept fixed and we study the asymptotic behavior of the projection constant as one or both of the parameters $p$ and $q$ vary. A complementary viewpoint would be to fix the parameters and let the dimension tend to infinity. As part of a separate research project by the authors, this regime can also be analyzed explicitly, leading to asymptotic formulas expressed in terms of integrals involving Laguerre polynomials.
\end{remark}

\section{Non-compact inclusions: Variants of the Ryll-Wojtaszczyk results}

The motivation for this section stems once again from a significant result in the remarkable work of Ryll and Wojtaszczyk \cite{ryll1983homogeneous}.
In their study, the authors address the question posed by Waigner regarding whether the identity map from  $ H_\infty(B_{\ell_2^n})  $ to  $ H_1(B_{\ell_2^n})  $ is compact for any positive integer  $ n \ge 2  $. They demonstrate the existence of a~sequence  $ (p_k)$ of $k$-homogeneous polynomials on the complex unit ball  $ B_{\ell_2^n}  $ of  $\mathbb{C}^n $ such that  $ \|p_k\|_{\infty} = 1$ and  $ \|p_k\|_2 \geq \sqrt{\pi}\,2^{-n}$.
It is noteworthy that the authors mention in \cite{ryll1983homogeneous} that Waigner's question is closely connected to a well-known problem posed by Rudin in his monograph \cite{rudin1980}: Does there exist an inner function on the open unit ball of the Hilbert space  $ \ell_2^n$ for $ n \ge 2$? For further details on this topic, see \cite{rudin1985} and \cite{wojtaszczyk1996banach}.

We recall the well-known result of Rutovitz \cite{rutovitz} regarding continuous projections in finite dimensional Hilbert spaces. This result establishes that the projection constant of the complex $ n $-dimensional Hilbert space is given by
\begin{align}
\label{Ruto}
\boldsymbol{\lambda}\big(\ell_2^n(\mathbb{C})\big) = \frac{\sqrt{\pi}}{2} \frac{n!}{\Gamma\left(n + \frac{1}{2}\right)}\,.
\end{align}

Now we are ready to prove the following theorem, which is a~general version of the Ryll-Wojtaszczyk result.

\medskip

\begin{theorem} \label{Ryll-Wojtaszczyk- polynomials}
Let  $\big((K,\mu),(G,\mathrm{m}),\varphi\big) $ be a~Rudin triple and  $S $ a~ $\varphi $-invariant, finite dimensional subspace of   $C(K) $.
Then there is  $f \in S $ such that
\[
\text{$\|f\|_\infty = 1 $ \quad and \quad  $\|f\|_2 \ge \frac{\sqrt{\pi}}{2}   \frac{1}{\boldsymbol{\lambda}(S)} $   \,.}
\]
\end{theorem}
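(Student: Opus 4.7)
The plan is to construct $f$ via a complex Gaussian averaging inside $S$ combined with a minimal representing measure; the constant $\sqrt\pi/2$ would arise from the complex Gaussian moment identity $\mathbb E|W|=\tfrac{\sqrt\pi}{2}(\mathbb E|W|^2)^{1/2}$, the same source as the factor in Rutovitz's formula~\eqref{Ruto}.

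First I would choose a~minimal projection $P\colon C(K)\to S$ with $\|P\|=\boldsymbol\lambda(S)$ (which exists by a~standard compactness argument since $S$ is finite-dimensional). Averaging over the Haar measure~$\mathrm m$, I would set $\tilde P u=\int_G V_g P V_{g^{-1}}u\,d\mathrm m(g)$ with $(V_g u)(x):=u(\varphi_{g^{-1}}x)$; by the $\varphi$-invariance of $S$ and $\mathbf R2$, $\tilde P$ would be a~projection onto $S$ of no larger norm that commutes with every $V_g$. Replacing $P$ by $\tilde P$, one may assume $P$ is $\varphi$-equivariant. The Riesz representation theorem then yields $(Pu)(x)=\int_K u\,d\nu_x$ for complex Borel measures $(\nu_x)_{x\in K}$ with $|\nu_x|(K)\le \boldsymbol\lambda(S)$, $\nu_{\varphi_g x}=(\varphi_g)_*\nu_x$, and $f(x)=\int f\,d\nu_x$ for every $f\in S$.

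Next I would fix an~$L_2(\mu)$-orthonormal basis $\{\phi_k\}_{k=1}^{N}$ of $S$ (with $N=\dim S$) and introduce the random element $F_\omega=\sum_{k=1}^{N}\gamma_k(\omega)\phi_k\in S$, with $\gamma_k$ i.i.d.~standard complex Gaussian. By Theorem~\ref{propertiesreproducing}(v) we have $\sum_k|\phi_k(y)|^2=\mathbf{k}_S(y,y)=N$ for every $y\in K$, so $F_\omega(y)\sim\mathcal N_{\mathbb C}(0,N)$. The Gaussian identity then yields $\mathbb E|F_\omega(y)|=\sqrt{\pi N}/2$ and, by Fubini, $\mathbb E\|F_\omega\|_{L_1(\mu)}=\sqrt{\pi N}/2$; hence, since $\|\cdot\|_1\le\|\cdot\|_2$ on a~probability space, $\mathbb E\|F_\omega\|_2\ge\sqrt{\pi N}/2$, while $\mathbb E\|F_\omega\|_2^2=N$.

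For the sup-norm side I would use that $F_\omega\in S$, so $F_\omega(x)=\int F_\omega\,d\nu_x$, and the Cauchy--Schwarz inequality for measures gives $|F_\omega(x)|^2\le \boldsymbol\lambda(S)\int|F_\omega|^2\,d|\nu_x|$. Exploiting the $\varphi$-invariance of the family $(|\nu_x|)_{x\in K}$ --- the averaged measure $\int_K|\nu_x|\,d\mu(x)$ would be $\varphi$-invariant and hence a~scalar multiple $c\mu$ with $c\le\boldsymbol\lambda(S)$, by uniqueness of the invariant probability on $K$ coming from $\mathbf R1$--$\mathbf R2$ --- together with the $\varphi$-invariance of the distribution of $F_\omega$, my plan is to extract an upper bound $\mathbb E\|F_\omega\|_\infty\lesssim \boldsymbol\lambda(S)\sqrt N$. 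Combining this with the lower bound on $\mathbb E\|F_\omega\|_2$ and a~Chebyshev-type selection produces a~realisation $\omega_0$ for which $\|F_{\omega_0}\|_2/\|F_{\omega_0}\|_\infty\ge\sqrt\pi/(2\boldsymbol\lambda(S))$; setting $f:=F_{\omega_0}/\|F_{\omega_0}\|_\infty$ would finish the proof. \emph{The hard part will be} isolating this sup-norm estimate with the \emph{exact} constant~$\sqrt\pi/2$, since a~naive integration of the pointwise Cauchy--Schwarz bound against $\mu$ collapses to a~trivial inequality; a~careful chaining or concentration argument exploiting the $\varphi$-invariance, or a~direct dual pairing with the kernel~$\mathbf{k}_S$, will likely be required to keep the absolute constant sharp rather than merely of the correct order.
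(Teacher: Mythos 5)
Your strategy has a genuine gap at exactly the point you flag as ``the hard part'', and it is not merely a matter of sharpening a constant: the sup-norm estimate you need is false in general. To extract a realisation $\omega_0$ with $\|F_{\omega_0}\|_2/\|F_{\omega_0}\|_\infty\ge \sqrt{\pi}/(2\boldsymbol{\lambda}(S))$ from your expectations you would need $\mathbb{E}\|F_\omega\|_\infty\le \boldsymbol{\lambda}(S)\sqrt{\dim S}$ (or something of that quality), and no invariance, chaining or concentration argument can deliver it. Take $K=G=\mathbb{Z}_N$ with the uniform measure and the translation action; this is a Rudin triple, and $S=C(K)=\ell_\infty^N$ is $\varphi$-invariant with $\boldsymbol{\lambda}(S)=1$. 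An orthonormal basis of $S$ in $L_2(\mu)$ is $\phi_k=\sqrt{N}e_k$, so $F_\omega=\sqrt{N}\sum_k\gamma_k e_k$, whence $\|F_\omega\|_\infty=\sqrt{N}\max_k|\gamma_k|$ and $\mathbb{E}\|F_\omega\|_\infty\asymp\sqrt{N\log N}$, while $\|F_\omega\|_2=(\sum_k|\gamma_k|^2)^{1/2}\asymp\sqrt{N}$. The random Gaussian element therefore satisfies $\|F_\omega\|_2/\|F_\omega\|_\infty\asymp(\log N)^{-1/2}\to 0$ and cannot witness the required ratio $\sqrt{\pi}/2$ (which in this example is attained by the constant function). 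The obstruction is the usual gap between $\mathbb{E}\sup$ and $\sup\mathbb{E}$: your pointwise estimate $\mathbb{E}|F_\omega(x)|\le\tfrac{\sqrt{\pi N}}{2}\,|\nu_x|(K)$ is correct for each fixed $x$, but it does not pass through the supremum, and the loss is unbounded in $\dim S$.

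The paper proves the statement by a short deterministic argument that avoids randomness altogether. Write $S_2$ for $S$ equipped with the $L_2(\mu)$-norm. The ideal property of the projection constant gives $\boldsymbol{\lambda}(S_2)\le\boldsymbol{\lambda}(S)\,\|\id\colon S\to S_2\|\,\|\id\colon S_2\to S\|$. Rutovitz's formula \eqref{Ruto} yields $\boldsymbol{\lambda}(S_2)\ge\frac{\sqrt{\pi}}{2}\sqrt{\dim S}$ (this is where the complex Gaussian moment you correctly identified as the source of $\sqrt{\pi}/2$ really enters), and Theorem~\ref{propertiesreproducing}, (i) and (vi), gives $|f(x)|=|\langle f,\mathbf{k}_S(x,\cdot)\rangle_{L_2(\mu)}|\le\sqrt{\dim S}\,\|f\|_2$, i.e.\ $\|\id\colon S_2\to S\|\le\sqrt{\dim S}$. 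Combining these, $\|\id\colon S\to S_2\|\ge\frac{\sqrt{\pi}}{2\boldsymbol{\lambda}(S)}$, and since $S$ is finite dimensional the supremum defining this operator norm is attained, producing the desired $f$. Your opening paragraph (equivariant minimal projection, representing measures) is sound but unnecessary; the argument should be routed through Rutovitz's theorem and the reproducing kernel rather than through a random element of $S$.
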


\smallskip

\begin{proof}
First we write  $S_2 $ for the finite dimensional Hilbert space defined  by  $S $ considered as a subspace of  $L_2(\mu) $. Of course,
$S $ itself  carries the supremum norm inherited
from $C(K) $. Then the ideal property of the projection constant implies that
\[
\boldsymbol{\lambda}(S_2) \,\leq \,\boldsymbol{\lambda}(S)\, \|\id \colon S \to S_2\| \, \|\id: S_2 \to S\|\,,
\]
and consequently
\[
\|\id: S \to S_2\|
\,\geq \,
\frac{\boldsymbol{\lambda}(S_2) }{\boldsymbol{\lambda}(S) }
\frac{1}{\|\id: S_2 \to S\|}\,.
\]
But by the result of Rutkovitz \eqref{Ruto}
\begin{align*}
\boldsymbol{\lambda} (S_2)
= \frac{\sqrt{\pi}}{2}   \frac{\Gamma(1 + \dim S)}{\Gamma(\frac{1}{2} + \dim S)} \ge \frac{\sqrt{\pi}}{2} \sqrt{\dim S}\,,
\end{align*}
and hence it suffices to show that
\begin{align} \label{claimo}
\|\id \colon  S_2 \to S\| \leq  \sqrt{\dim S}\,.
\end{align}
Indeed, by Lemma~\ref{theorem kernel0} (i) and (vi), the reproducing kernel of $ S $, denoted $k_{S}\colon K \times K \to \mathbb{C}$,
satisfies the following properties for all $ f \in S $ and $ x \in K$:
\[
f(x) = \langle f, k_S(x, \cdot) \rangle_{L_2(\mu)} \quad \text{and} \quad \|k_S(x, \cdot)\|_{L_2(\mu)} = \sqrt{\dim S}\,.
\]
Consequently, for every  $f \in S $
\begin{align*}
\|f\|_\infty = \sup_{x \in K}| \langle f, k_S(x, \cdot) \rangle_{L_2(\mu)}|
\leq  \|f\|_{L_2(\mu)} \sup_{x \in K}\|k_S(x, \cdot)\|_{L_2(\mu)} =  \|f\|_{L_2(\mu)} \sqrt{\dim S}\,.
\end{align*}
This completes the proof.
\end{proof}

\smallskip

As a consequence, the following result follows.

\begin{corollary} \label{ryll compact}
Let $\big((K,\mu),(G,\mathrm{m}),\varphi\big) $ be a~Rudin triple and let  $X$ be an infinite dimensional closed subspace  of \,$C(K)$.
Suppose that $\big(S_k\big)_{k=1}^\infty $ is a~sequence  of finite dimensional $\varphi$-invariant subspaces of  $X $, which are
pairwise orthogonal in  $L_2(\mu) $ and such that  $\gamma:= \sup_{n\geq 1} \boldsymbol{\lambda}(S_k)<+ \infty $. Then the continuous inclusion map
$X \hookrightarrow L_1(\mu) $ is not a~compact operator.
\end{corollary}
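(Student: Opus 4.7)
The plan is to exhibit an explicit bounded sequence in $X$ that admits no $L_1(\mu)$-Cauchy subsequence, which immediately rules out compactness of the inclusion. The previous Theorem~\ref{Ryll-Wojtaszczyk- polynomials}, applied to each finite-dimensional $\varphi$-invariant subspace $S_k \subset X$, furnishes a function $f_k \in S_k$ with $\|f_k\|_\infty = 1$ and
\[
\|f_k\|_{L_2(\mu)} \,\geq\, \frac{\sqrt{\pi}}{2} \cdot \frac{1}{\boldsymbol{\lambda}(S_k)} \,\geq\, \frac{\sqrt{\pi}}{2\gamma}\,,
\]
where the uniform lower bound is precisely where the hypothesis $\gamma = \sup_k \boldsymbol{\lambda}(S_k) < +\infty$ enters. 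This gives a sequence $(f_k)$ of norm-one elements of $X \subset C(K)$.

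Next, I would combine the pairwise $L_2(\mu)$-orthogonality of the subspaces $S_k$ with the Pythagorean identity: for $j \neq k$,
\[
\|f_j - f_k\|_{L_2(\mu)}^2 \,=\, \|f_j\|_{L_2(\mu)}^2 + \|f_k\|_{L_2(\mu)}^2 \,\geq\, \frac{\pi}{2\gamma^2}\,.
\]
Then I would pass to $L_1(\mu)$ via the elementary interpolation inequality $\|g\|_{L_2(\mu)}^2 \leq \|g\|_\infty \, \|g\|_{L_1(\mu)}$, together with $\|f_j - f_k\|_\infty \leq 2$, to obtain
\[
\|f_j - f_k\|_{L_1(\mu)} \,\geq\, \frac{1}{2}\,\|f_j - f_k\|_{L_2(\mu)}^2 \,\geq\, \frac{\pi}{4\gamma^2}\,.
\]
Thus $(f_k)$ is uniformly separated in $L_1(\mu)$, so no subsequence can be $L_1$-Cauchy.

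To conclude, I would observe that $(f_k)$ is bounded in $X$ (since $\|f_k\|_X = \|f_k\|_\infty = 1$); if the inclusion $X \hookrightarrow L_1(\mu)$ were compact, some subsequence would have to converge in $L_1(\mu)$, contradicting the separation just established. There is no real obstacle here once Theorem~\ref{Ryll-Wojtaszczyk- polynomials} is in hand: the argument runs cleanly through orthogonality and the $L_\infty$--$L_2$--$L_1$ interpolation.
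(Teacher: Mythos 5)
Your proof is correct, and it shares the paper's key ingredients: Theorem~\ref{Ryll-Wojtaszczyk- polynomials} applied to each $S_k$ to produce $f_k$ with $\|f_k\|_\infty=1$ and $\|f_k\|_{L_2(\mu)}\ge \frac{\sqrt{\pi}}{2\gamma}$, followed by the interpolation $\|g\|_{L_2(\mu)}^2\le \|g\|_\infty\|g\|_{L_1(\mu)}$. Where you diverge is in the concluding step. The paper applies the interpolation inequality to each $f_k$ individually to get $\inf_k\|f_k\|_{L_1(\mu)}>0$, and then uses orthogonality to argue that $(f_k)$ is weakly null in $L_2(\mu)$, hence weakly null in $L_1(\mu)$; a compact inclusion would force norm convergence of a subsequence to the weak limit $0$, contradicting the lower bound on the norms. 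You instead apply orthogonality to the differences via the Pythagorean identity, $\|f_j-f_k\|_{L_2(\mu)}^2=\|f_j\|_{L_2(\mu)}^2+\|f_k\|_{L_2(\mu)}^2\ge\frac{\pi}{2\gamma^2}$, and then interpolate the difference (using $\|f_j-f_k\|_\infty\le 2$) to get uniform $L_1$-separation $\|f_j-f_k\|_{L_1(\mu)}\ge\frac{\pi}{4\gamma^2}$, which directly rules out any $L_1$-Cauchy subsequence. Your route is slightly more elementary, as it avoids invoking weak convergence of bounded orthogonal sequences and the passage from weak $L_2$- to weak $L_1$-convergence; the paper's route isolates the cleaner qualitative fact that $(f_k)$ is weakly null with $L_1$-norms bounded below. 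Both arguments are complete and yield the same quantitative content.
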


\begin{proof}
By Theorem~\ref{Ryll-Wojtaszczyk- polynomials}, it follows that for each  $k\in \mathbb{N} $, we can find  $f_k \in S_k $ such that
$\|f_k\|_\infty=1 $ and $\|f_k\|_2 \ge \frac{\sqrt{\pi}}{2}   \frac{1}{\boldsymbol{\lambda}(S_k)} $. Thus our hypothesis yields
\[
\|f_k\|_2 \ge \frac{\sqrt{\pi}}{2\gamma}, \quad\, k\in \mathbb{N}\,,
\]
and so we get
\[
\frac{\sqrt{\pi}}{2\gamma} \leq \|f_k\|_2 \leq \big(\|f_k\|_1\,
\|f_k\|_\infty\big)^{\frac{1}{2}} = \sqrt{\|f_k\|_1}\,.
\]
Note that $(f_k) $ is a~bounded orthogonal sequence, so  $f_k \to 0 $ weakly in  $L_2(\mu) $ and hence  $f_k\to 0 $ weakly in  $L_1(\mu)$.
Since $\inf_k \|f_k\|_1 >0 $, the required  statement follows.
\end{proof}

For any infinite set $J \subset \mathbb{N}$ and a fixed degree $q$, we consider the subspace of $C(\mathbb{S}^{n-1})$
\[
\mathcal{H}_{J,q} := \overline{\operatorname{span}\big\{ \mathcal{H}_{p,q}(\mathbb{S}^{n-1}) : p \in J \big\}},
\]
which represents bihomogeneous harmonic polynomials, where the homogeneity degree in $z = (z_j) $ lies in the infinite set  $J$, and
the degree in $ \overline{z} = (\overline{z_j}) $ is fixed at  $ q $. As a consequence of Corollary~\ref{ryll compact}, Lemma~\ref{ortogonales},
and Proposition~\ref{prop: bound harm}, we deduce that this space is not compactly embedded in  $ L_1(\mathbb{S}^{n-1}, \sigma_n) $, where
$\sigma_n $ denotes the Haar measure on the sphere  $\mathbb{S}^{n-1} $. This leads to the following result:

\begin{corollary} \label{coro: inclusion no compacta}
Let  $J \subset \mathbb{N} $ be an infinite set, and let  $q$ be a fixed degree. Then, any closed subspace  $X $ of $C(\mathbb{S}^{n-1})$
that contains $ \mathcal{H}_{J,q} $ is not compactly included in  $L_1(\mathbb{S}^{n-1}, \sigma_n)$.
\end{corollary}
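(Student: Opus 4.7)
The plan is to apply Corollary \ref{ryll compact} directly to the Rudin triple $\big((\mathbb{S}_n,\sigma_n),(\mathcal{U}_n,\mathrm{m}),\varphi\big)$ and to the subspace $X$, using the sequence of bihomogeneous harmonic spaces $\mathcal{H}_{p,q}(\mathbb{S}_n)$ with $p$ ranging over the infinite set $J$. All the ingredients needed for this application have been established earlier in the paper, so the proof is essentially a verification of the hypotheses.

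First, I enumerate $J=\{p_k\}_{k=1}^{\infty}$ and put $S_k:=\mathcal{H}_{p_k,q}(\mathbb{S}_n)$. Each $S_k$ is a finite-dimensional subspace of $C(\mathbb{S}_n)$, and it is $\mathcal{U}_n$-invariant by Lemma~\ref{lemma: invariance}; moreover, since $\mathcal{H}_{J,q}\subset X$, every $S_k$ is contained in $X$. By Lemma~\ref{ortogonales}, the subspaces $(S_k)_{k\geq 1}$ are pairwise orthogonal in $L_2(\mathbb{S}_n,\sigma_n)$, because the pairs of bidegrees $(p_k,q)$ are distinct.

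The step that actually has content is the uniform boundedness of the projection constants $\boldsymbol{\lambda}(S_k)$. This is exactly where the asymptotic upper bound \eqref{tres-H} of Proposition~\ref{prop: bound harm} enters: with $n$ and $q$ fixed, one has
\[
\limsup_{p\to+\infty}\boldsymbol{\lambda}\big(\mathcal{H}_{p,q}(\mathbb{S}_n)\big)\leq 2^{n-1}3^{q}\binom{n-2+q}{q}\,,
\]
and consequently $\gamma:=\sup_{k\geq 1}\boldsymbol{\lambda}(S_k)<+\infty$ (the finitely many terms for small $p_k$ are trivially bounded, and the tail is controlled by the above limsup). With these three conditions in hand, Corollary~\ref{ryll compact} applies to $X$ and yields that the continuous inclusion $X\hookrightarrow L_1(\mathbb{S}_n,\sigma_n)$ is not a compact operator.

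I do not anticipate any real obstacle, since the heavy lifting has been done earlier: the abstract Ryll--Wojtaszczyk lower bound $\|f_k\|_2\geq \frac{\sqrt{\pi}}{2\gamma}$ from Theorem~\ref{Ryll-Wojtaszczyk- polynomials} is packaged inside Corollary~\ref{ryll compact}, and the uniform control on $\boldsymbol{\lambda}(\mathcal{H}_{p,q}(\mathbb{S}_n))$ for fixed $(n,q)$ is exactly the output of Proposition~\ref{prop: bound harm}. The only point worth flagging is that one must verify the triple $\big((\mathbb{S}_n,\sigma_n),(\mathcal{U}_n,\mathrm{m}),\varphi\big)$ is indeed a Rudin triple, which was already observed after Definition~\ref{strongaccesibility}.
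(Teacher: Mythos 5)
Your proposal is correct and follows exactly the paper's argument: the paper likewise deduces the corollary by combining Corollary~\ref{ryll compact}, Lemma~\ref{ortogonales}, and Proposition~\ref{prop: bound harm} applied to the sequence $S_k=\mathcal{H}_{p_k,q}(\mathbb{S}_n)$. Your additional remarks (the $\mathcal{U}_n$-invariance from Lemma~\ref{lemma: invariance}, and the observation that the finitely many small-$p_k$ terms are handled by finite-dimensionality so that the limsup bound yields $\gamma<+\infty$) correctly fill in the details the paper leaves implicit.
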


We conclude with the observation that, as noted in Remark~\ref{iso piola}, this conclusion also extends to the space $\mathcal{H}_{p,J}$,
where $p$ is the fixed degree (with the analogous definition).



\end{document}